\tikzset{mysquare/.style={regular polygon,regular polygon sides=4, inner sep=0}} 
\tikzset{mypent/.style={regular polygon,regular polygon sides=5, inner sep=0}} 
\tikzset{myhex/.style={regular polygon,regular polygon sides=6, inner sep=0}} 
\tikzstyle{5vert}=[mysquare, minimum size = 4.5pt, inner sep = 0.5pt,
\tikzstyle{6vert}=[shape = circle, minimum size = 8.0pt, inner sep = 2pt,
\tikzstyle{7vert}=[mypent, minimum size = 8.0pt, inner sep = 2pt,
\tikzstyle{unvert}=[draw=none, fill=white]
\tikzstyle{uvert}=[draw=none, fill=none]
\tikzset{every node/.style=6vert}
\tikzstyle{5vertB}=[mysquare, minimum size = 15.0pt, inner sep = 1.5pt,
\tikzstyle{6vertB}=[shape = circle, minimum size = 15.0pt, inner sep = 1.5pt,
\tikzstyle{7vertB}=[mypent, minimum size = 15.0pt, inner sep = 0.5pt,
\tikzstyle{8vertB}=[myhex, minimum size = 15.0pt, inner sep = 0.5pt,
\tikzstyle{unvertB}=[draw=none, fill=white]
\tikzstyle{uvertB}=[draw=none, fill=none]
\def\a{\alpha}
\def\b{\beta}
\def\ceil#1{\lceil#1\rceil}
\def\Ceil#1{\left\lceil#1\right\rceil}
\def\Jc{{\mathcal J}}
\def\diam{\textrm{diam}}
\def\prune{\textrm{prune}}
\def\CL#1{\mathcal{C}_L(#1)}
\def\dvorak{Dvo\v{r}\'{a}k}
\newtheorem{lem}{Lemma}
\newtheorem{prop}[lem]{Proposition}
\newtheorem{conj}[lem]{Conjecture}
\newtheorem{obs}[lem]{Observation}
\newtheorem*{mainthm}{Main Theorem}
\theoremstyle{definition}
\newtheorem{rem}[lem]{Remark}
\newcommand{\aside}[1]{\marginnote{\scriptsize{#1}}[0cm]}
\newcommand{\aaside}[2]{\marginnote{\scriptsize{#1}}[#2]}
\newcommand\Emph[1]{\emph{#1}\aside{#1}}
\newcommand\EmphE[2]{\emph{#1}\aaside{#1}{#2}}
 \newcommand{\linkdest}[1]{\Hy@raisedlink{\hypertarget{#1}{}}}
\newcommand\Rule[1]{\hyperlink{R#1}{(R#1)}}
\def\vph{\varphi}
\renewcommand\le\leqslant
\renewcommand\ge\geqslant
\title{$10$-list Recoloring of Planar Graphs}
\author{Daniel W. Cranston\thanks{%
Dept.~of Computer Science, Virginia Commonwealth
University, Richmond, VA, USA;
\texttt{dcransto@gmail.com}
} 
}
\begin{document}
\maketitle

\begin{abstract}
Fix a planar graph $G$ and a list assignment $L$ with $|L(v)|=10$ for all $v\in V(G)$.
Let $\a$ and $\b$ be $L$-colorings of $G$.  A recoloring sequence from $\a$ to $\b$
is a sequence of $L$-colorings, beginning with $\a$ and ending with $\b$, such that each successive
pair in the sequence differs in the color on a single vertex of $G$.  We show that there exists a constant $C$
such that for all choices of $\a$ and $\b$ there exists a recoloring sequence $\sigma$ from $\a$ to $\b$
that recolors each vertex at most $C$ times.  In particular, $\sigma$ has length at most $C|V(G)|$.
This confirms a conjecture of {\dvorak} and Feghali.
For our proof, we introduce a new technique for quickly showing that many configurations are reducible.  We believe
this method may be of independent interest and will have application to other problems in this area.
\end{abstract}

\section{Introduction}
A recoloring sequence transforms one specified $k$-coloring of a graph $G$ into another by recoloring one vertex at
a time.  We also require that each recoloring step yields an intermediate proper $k$-coloring of $G$.  We typically ask whether
every $k$-coloring can be transformed (or ``reconfigured'') to every other.  If so, we seek to bound as a function of $|G|$ the maximum
number of recoloring steps needed, over all pairs of $k$-colorings.

Analogues of this paradigm have been applied to a broad range of structures including independent sets, dominating sets, solutions
to 3-SAT, triangulations of a planar point set, and non-crossing spanning trees, to name a few.  This general topic of study is called 
\emph{reconfiguration}, and we recommend to the interested reader the surveys~\cite{nishimura-survey} and~\cite{vdH-survey} and the introduction of~\cite{BKUV}.
Perhaps the most widely studied area within reconfiguration is graph coloring, and that is our focus here.

Cereceda suggested~\cite[Conjecture~5.21]{cereceda-diss} that if a graph $G$ is $d$-degenerate and $k\ge d+2$, then each 
$k$-coloring $\a$ of $G$ can be reconfigured to each other $k$-coloring $\b$ of $G$ with a recoloring sequence of length $O(|G|^2)$.  
This problem has attracted much interest and work, including some impressive partial results~\cite{BH-poly-cereceda's,feghali-JCTB}.  
Nonetheless, the problem remains wide open, even in the case $d=2$.  

In this paper, we consider a list-coloring variant of this conjecture, and we make the lists large enough to guarantee a 
recoloring sequence with length $O(|G|)$.
Specifically, we focus on graph classes $\mathcal{G}$ and sizes $k$ such that there exists a constant $C_{k,\mathcal{G}}$ satisfying
the following: For every graph $G\in \mathcal{G}$ and every $k$-assignment $L$ and every two $L$-colorings $\alpha$ and $\beta$, there
exists a recoloring sequence from $\a$ to $\b$ that recolors each vertex at most $C_{k,\mathcal{G}}$ times.
Typically, we take $\mathcal{G}$ to be a hereditary class. To prove such a length bound of $O(|G|)$ by induction, we find a 
``good subgraph'' $H$ in $G$, claim the result for the smaller graph $G-H$, by the induction hypothesis, and then show 
that we can extend the result to $G$ precisely because we chose $H$ to be ``good''.  For these types of arguments, the 
following Extension Lemma is invaluable.

\begin{lem}[Extension Lemma]
\label{extension:lem}
Let $G$ be a graph, $L$ be a list assignment for $G$, and $\a$ and $\b$ be 
$L$-colorings for $G$.  Fix $v\in V(G)$.  Let $G':=G-v$, and let $\a'$ and $\b'$
be the restrictions to $G'$ of $\a$ and $\b$.  Suppose there exists a recoloring
sequence $\sigma'$ that recolors $G$ from $\a'$ to $\b'$.  If $\sigma'$ recolors
vertices in $N(v)$ a total of $s$ times, then $\sigma'$ extends to a recoloring
sequence $\sigma$ for $G$ from $\a$ to $\b$ that recolors $v$ at most
$\Ceil{\frac{s}{|L(v)|-d(v)-1}}+1$ times.
\end{lem}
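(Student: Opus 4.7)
The plan is to process $\sigma'$ in blocks, using a single ``holding color'' for $v$ during each block. Set $q := |L(v)|-d(v)-1$; we may assume $q \geq 1$, since otherwise the bound $\Ceil{s/q}+1$ is infinite and there is nothing to prove. I would first partition the $s$ neighbor-recolorings of $\sigma'$ into $B := \Ceil{s/q}$ consecutive groups, each of size at most $q$; this induces a block structure on the full sequence $\sigma'$.

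The key step is to choose, for each block $b$, a safe color $p_b \in L(v)$ that $v$ may carry throughout block $b$. The colors $v$ must avoid in block $b$ consist of (i) the at most $d(v)$ colors borne by vertices of $N(v)$ at the start of the block, together with (ii) the at most $q$ target colors of the neighbor-recolorings performed within the block. These two sources contribute at most $d(v)+q = |L(v)|-1$ forbidden values, so at least one color in $L(v)$ remains; let $p_b$ be any such color. Since every color a vertex of $N(v)$ ever carries during block $b$ is either one of its starting colors or one of the targets, $p_b$ disagrees with every neighbor at every instant of the block.

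To assemble $\sigma$, I would prepend a single $v$-recoloring from $\alpha(v)$ to $p_1$ (omitted when $\alpha(v)=p_1$), execute each block with $v$ held at color $p_b$, insert a $v$-recoloring from $p_b$ to $p_{b+1}$ at each internal boundary (omitted when $p_b=p_{b+1}$), and append a final $v$-recoloring from $p_B$ to $\beta(v)$ (omitted when equal). Each inserted recoloring is legal: at any internal boundary the current colors of $N(v)$ form a subset of the forbidden set used to define the destination color $p_{b+1}$, and the final step is valid because $\beta$ is a proper $L$-coloring of $G$. Counting yields at most $1+(B-1)+1 = B+1 = \Ceil{s/q}+1$ recolorings of $v$, as required.

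The only delicate point is the bookkeeping that every inserted $v$-recoloring respects the current neighbor colors; this is where the pigeonhole identity $d(v)+q=|L(v)|-1$ is really doing the work. I do not expect a serious obstacle beyond verifying these transitions and handling the degenerate edge cases $s=0$ and $\alpha(v)=\beta(v)$ gracefully.
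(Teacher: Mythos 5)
Your proof is correct and rests on the same pigeonhole identity $d(v)+q=|L(v)|-1$ applied to windows of $q$ consecutive neighbor-recolorings, which is exactly the mechanism the paper uses. The only organizational difference is that you pre-partition the neighbor-recolorings into fixed blocks and recolor $v$ at every block boundary, whereas the paper recolors $v$ lazily, only when its current color is about to appear on a neighbor, choosing a replacement that survives the next $|L(v)|-d(v)-1$ neighbor-recolorings; both yield the identical bound $\lceil s/(|L(v)|-d(v)-1)\rceil+1$.
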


\begin{proof}
Let $c_1,\ldots,c_s$ denote the colors that are used by $\sigma'$ to recolor
$N(v)$, with multiplicity and in order.  We show how to extend $\sigma'$ to
$\sigma$.  Each time that $\sigma$ will recolor $N(v)$ with some $c_i$ that is
currently used on $v$, we must first recolor $v$
before we can continue with the recoloring step in $\sigma'$.  Let
$a:=|L(v)|-d(v)-1$.  We must avoid all colors currently used on
$N(v)$, as well as $c_i$, so we have at least $a$ choices.  We choose a color
that does not appear among $c_i,c_{i+1},\ldots,c_{i+a-1}$.  Thus, the number of
times that we must recolor $v$, due to its current color being needed on
$N(v)$, is at most $\Ceil{\frac{s}{|L(v)|-d(v)-1}}$.  To conclude, we may need
one more step to recolor $v$ to match $\b(v)$.
\end{proof}

The Extension Lemma has been used implicitly in many papers; it 
was first stated explicitly for coloring in~\cite{BBFHMP}, and generalized to
list-coloring (with the same proof) in~\cite{recoloring-sparse}.
We note also that the same proof works in the more general context of
``correspondence'' coloring; however, we omit the definitions needed to
make this formal, since including them would only add more abstraction without
adding any more interesting ideas.  (In fact, nearly all of the arguments in 
this paper work equally well for correspondence coloring.  However, we will
not mention this in what follows.)

For a graph $G$ and list assignment $L$, let \Emph{$\CL{G}$} denote the graph with 
a node for each $L$-coloring of $G$ and an edge between each pair of nodes that 
differ only in the color of a single vertex of $G$.
Bousquet and Perarnau~\cite{BP} used the Extension Lemma to prove the following
pretty result.

\begin{lem}[\cite{BP}] 
\label{linear-cereceda:lem}
If $G$ is a $d$-degenerate graph and $L$ is a list assignment with
$|L(v)|\ge 2d+2$ for all $v\in V(G)$, then
$\diam~\CL{G}\le (d+1)|G|$.
\end{lem}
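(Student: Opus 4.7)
The plan is to prove the lemma by induction on $|G|$, strengthening the inductive hypothesis to the per-vertex statement: there is a recoloring sequence from $\a$ to $\b$ in which \emph{each} vertex is recolored at most $d+1$ times. Summing over all $|G|$ vertices then yields the desired bound on the length (and hence on the diameter of $\CL{G}$). The strengthening is essential, because the Extension Lemma bounds the number of recolorings of the removed vertex in terms of how often its \emph{neighbors} are recolored, not in terms of the total length of the sub-sequence.

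For the inductive step, I would use $d$-degeneracy to select a vertex $v\in V(G)$ with $d_G(v)\le d$. Setting $G':=G-v$, the graph $G'$ is still $d$-degenerate and the list assignment $L$ restricted to $G'$ still satisfies $|L(u)|\ge 2d+2$ for all $u\in V(G')$. By the induction hypothesis, there is a recoloring sequence $\sigma'$ from $\a'$ to $\b'$ (the restrictions of $\a,\b$) in which every vertex of $G'$ is recolored at most $d+1$ times. Let $s$ denote the total number of times that $\sigma'$ recolors vertices of $N(v)$; since each neighbor of $v$ is recolored at most $d+1$ times and $|N(v)|\le d$, we have $s\le (d+1)d$.

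Now I would invoke the Extension Lemma to lift $\sigma'$ to a recoloring sequence $\sigma$ of $G$ from $\a$ to $\b$. The bound it provides on the number of recolorings of $v$ is
\[
\Ceil{\frac{s}{|L(v)|-d(v)-1}}+1 \;\le\; \Ceil{\frac{(d+1)d}{(2d+2)-d-1}}+1 \;=\; \Ceil{\frac{(d+1)d}{d+1}}+1 \;=\; d+1.
\]
Every other vertex is recolored exactly as many times in $\sigma$ as in $\sigma'$, hence at most $d+1$ times. This completes the induction, and summing $d+1$ over the $|G|$ vertices gives the diameter bound $(d+1)|G|$.

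The main obstacle, such as it is, lies not in any calculation but in choosing the right strengthening of the inductive statement: a direct induction on the length of $\sigma'$ fails because after several inductive applications the bound on $s$ would blow up linearly in $|G|$, whereas the per-vertex strengthening keeps $s$ bounded by a constant depending only on $d$. Once this reformulation is in place, the hypothesis $|L(v)|\ge 2d+2$ is exactly tight to make the ceiling in the Extension Lemma evaluate to $d$, so that the total count for $v$ is $d+1$ and the induction closes.
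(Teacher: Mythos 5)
Your proof is correct and follows the same route as the paper: strengthen the induction hypothesis to a per-vertex bound of $d+1$ recolorings, delete a vertex $v$ of degree at most $d$, and apply the Extension Lemma to reinsert $v$ with at most $\Ceil{d(d+1)/(d+1)}+1 = d+1$ recolorings. Your remark about why the per-vertex strengthening is necessary is a nice explicit articulation of something the paper leaves implicit.
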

\begin{proof}
Let $L$ be a list assignment as in the lemma, and fix $L$-colorings $\a$ and $\b$.
We prove that $G$ can be recolored from $\a$ to $\b$ such that each vertex is
recolored at most $d+1$ times.  Pick $v$ such that $d(v)\le d$.  Let $G':=G-v$,
and let $\a'$ and $\b'$ denote the restrictions to $G'$ of $\a$ and $\b$.
By hypothesis, there exists a sequence $\sigma'$ that recolors $G$ from $\a'$ to
$\b'$ and recolors each vertex at most $d+1$ times.  By the Extension Lemma, we
can extend $\sigma'$ to a sequence $\sigma$ that recolors $v$ at most
$\Ceil{\frac{|N(v)|(d+1)}{2d+2-(d+1)}}+1=|N(v)|+1\le d+1$ times.
\end{proof}

\begin{conj}
Let $k,d$ be positive integers with $k\ge d+3$.  There exists a constant\footnote{A priori, we might want to allow $C_d$ to also 
depend on $k$, but the stated version is equivalent, as follows.  If the weaker version is true, then $C^k_d$ exists for each 
$k\ge d+3$.  Let $C_d:=\max\{d+1,\max_{d+3\le k\le 2d+1}C^k_d\}$. This suffices by the previous lemma.} $C_d$ such that if $G$ is 
a $d$-degenerate graph and $L$ a $k$-assignment for $G$, then $\diam~\CL{G}\le C_d|G|$.
\end{conj}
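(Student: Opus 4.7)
The plan is to proceed by strong induction on $|G|$, generalising the argument for Lemma~\ref{linear-cereceda:lem} in a more delicate way. Given a $d$-degenerate graph $G$ and a $k$-assignment $L$ with $k\ge d+3$, I would identify a bounded-size subgraph $H\subseteq G$ --- a \emph{reducible configuration} --- whose external attachment to $V(G)\setminus V(H)$ is controlled, apply the induction hypothesis to $G':=G-V(H)$ to obtain a recoloring sequence $\sigma'$ between the restrictions of $\a$ and $\b$ that uses at most $C_d$ recolorings per vertex, and then extend $\sigma'$ to a recoloring sequence $\sigma$ for $G$ that also uses at most $C_d$ recolorings on each vertex of $H$.

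The reason a direct single-vertex application of the Extension Lemma does not suffice is that when $k=d+3$ and $d(v)=d$, the denominator $|L(v)|-d(v)-1$ equals $2$, so absorbing one low-degree vertex roughly halves the remaining recoloring budget; peeling off vertices one at a time along a degeneracy ordering then produces an exponential bound rather than a linear one. To escape this, the configuration $H$ must be chosen so that its internal flexibility is large enough to absorb the recolorings on $N(H)\setminus V(H)$ \emph{collectively}, at cost $O(1)$ per vertex of $H$, independently of the length of $\sigma'$.

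The proof thus splits into two tasks. The first is \emph{unavoidability}: exhibiting for each $d$ a finite family $\mathcal{F}_d$ of configurations such that every $d$-degenerate graph contains a member of $\mathcal{F}_d$; here one expects to use discharging, leveraging the fact that a $d$-degenerate graph has average degree strictly less than $2d$. The second is \emph{reducibility}: showing that for each $H\in\mathcal{F}_d$ and every recoloring sequence $\sigma'$ of $G-V(H)$, the sequence $\sigma'$ extends to $G$ while recoloring each vertex of $H$ at most $C_d$ times, for some constant depending only on $\mathcal{F}_d$. For the second task I would adapt the ``many configurations are reducible'' technique advertised in the introduction: its point is precisely that, once $H$ is small and the internal lists of $H$ retain at least a few available colors after the external colors are fixed --- which is guaranteed by $k\ge d+3$ --- arbitrary external recoloring patterns can be matched by an internal strategy with constant budget.

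The main obstacle, and the reason the conjecture is open, lies in calibrating these two tasks against each other. Under the tight hypothesis $k\ge d+3$ there is very little slack, so the family $\mathcal{F}_d$ must be both rich enough to be reducible under weak list conditions and sparse enough to avoid containing members that are not reducible; the smaller and more structured $\mathcal{F}_d$ is, the harder unavoidability becomes. The planar case $d=5$, $k=10$ treated in this paper exploits planarity both to drive the discharging and to control the geometry of reducible configurations, and no analogous machinery is currently known for arbitrary $d$-degenerate graphs, even for $d=2$ with $k=5$, which is the sole case of the conjecture not already covered by Lemma~\ref{linear-cereceda:lem} in low degeneracy.
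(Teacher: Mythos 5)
This statement is posed in the paper as an open conjecture, not a theorem, and the paper supplies no proof of it; the paper's Main Theorem establishes only the special case of planar graphs (which are $5$-degenerate) with lists of size $10$, which does not even reach the $k=d+3=8$ threshold the conjecture calls for. You correctly recognize that the statement is open, and your sketch accurately reflects the methodology the paper uses for its special case: an inductive removal of a reducible configuration $H$, the observation that the Extension Lemma alone degrades exponentially when $k$ is close to $d+1$ because the denominator $|L(v)|-d(v)-1$ can be as small as $2$, and the resulting need to pair a discharging-based unavoidability argument with a collective reducibility argument for whole configurations. Your identification of the obstacle --- that under the tight hypothesis $k\ge d+3$ one must balance a family $\mathcal{F}_d$ rich enough for unavoidability yet uniformly reducible with so little list slack --- matches exactly why the paper treats only the planar, girth-stratified cases in Table 1 and leaves the general $d$-degenerate conjecture open. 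There is nothing to fault here beyond the fact that, as you yourself say, this is a research program rather than a proof; since the paper likewise offers no proof, your assessment is consistent with the source, and I would only caution against the claim that $d=2,k=5$ is ``the sole case not covered'' --- for each fixed $d\ge 2$ the range $d+3\le k\le 2d+1$ contains $d-1$ values of $k$ left open by Lemma~\ref{linear-cereceda:lem}, of which $k=5$ is merely the unique one when $d=2$.
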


The conjecture above was posed in~\cite{BBFHMP} only for coloring (the case when $L(v)=\{1,\ldots,k\}$ for all 
$v$), but here we extend it to list-coloring.  In fact, we propose the following meta-conjecture.

\begin{conj}
\label{meta-conj}
For every ``natural'' graph class $\mathcal{G}$ and positive integer $k$, if there exists $C_{k,\mathcal{G}}$ such that 
$\diam~\CL{G} 
\le C_{k,\mathcal{G}}|G|$ for all $G\in \mathcal{G}$ when $L(v)=\{1,\ldots,k\}$ for all $v$, then also there exists $C'_{k,\mathcal{G}}$ such that $\diam~\CL{G}\le C'_{k,\mathcal{G}}|G|$ for all $G\in \mathcal{G}$ for every $k$-assignment $L$.
\end{conj}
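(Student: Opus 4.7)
The statement is a meta-conjecture with ``natural'' deliberately left informal, so a fully formal proof is not the goal; the plan is instead to describe a general template for settling it class by class once a coloring bound $\diam~\CL{G}\le C_{k,\mathcal{G}}|G|$ (with $L(v)=\{1,\ldots,k\}$ for all $v$) is in hand. Any proof of a non-trivial instance will need a class-specific structural input, so what one can hope to formalize is a meta-theorem of the form: \emph{if the coloring bound is proved via reducible configurations of a particular shape, then the list-coloring bound follows with a possibly larger constant.}

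Concretely, the plan follows the pattern of Lemma~\ref{linear-cereceda:lem}. Induct on $|G|$; locate a reducible configuration $H$ in every $G\in\mathcal{G}$; apply the induction hypothesis to $G-H$; and glue with the Extension Lemma. The first step is to translate the discharging or structural argument that furnishes reducible configurations for $k$-colorings of $\mathcal{G}$ into a list version, typically by working with a possibly larger list size $k'\ge k$. This extra slack is what the Extension Lemma consumes: one needs $|L(v)|-d(v)-1\ge 1$ at each removed vertex, and typical reducible configurations contain several such vertices whose slacks must be simultaneous. The second step is to verify that, when one peels off a reducible configuration $H$ of bounded size, the total internal recoloring cost is linear in the boundary recoloring count; this follows by iterating the Extension Lemma vertex by vertex along a suitable ordering of $V(H)$.

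The main obstacle, and presumably the reason the statement is only a conjecture, is that list choosability is genuinely stronger than the chromatic number: bipartite graphs are not always $2$-choosable, planar graphs are not always $4$-choosable, and so on. Hence no black-box reduction from coloring recoloring to list recoloring can hold at the same value of $k$; one must pay in slack, and the amount of slack needed is not a priori bounded. A second difficulty is that discharging arguments for coloring often pin a particular colour at a vertex, and this tactic can fail under varying lists. My plan would be to extract the new fast reducibility technique advertised in the abstract of this paper, abstract it away from the specific setting of $10$-list recoloring of planar graphs, and test it on classes with well-understood reducible configurations---triangle-free planar graphs, planar graphs of girth at least $g$ for small $g$, minor-closed families, and graphs of bounded maximum average degree---in the hope that it supplies exactly the list-aware reducibility engine that the template above requires.
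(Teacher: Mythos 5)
The statement you were handed is Conjecture~\ref{meta-conj} in the paper; it is posed, not proved. The paper offers only evidence for it: Table~1 summarizes girth/list-size pairs for which the linear-diameter bound is known, and the Main Theorem of this paper settles the single new case (girth~$\ge 3$, lists of size~$10$). There is no ``paper's own proof'' to compare your attempt against.

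Given that, your response is appropriate: you correctly recognize the statement as a heuristic meta-conjecture with ``natural'' left undefined, and you decline to manufacture a spurious proof. Your proposed template --- locate a reducible configuration, delete it, induct, glue with the Extension Lemma --- is in fact exactly the machinery the paper uses to establish its one new instance, and your two cautionary observations are sound: the gap between chromatic number and choosability means no black-box reduction can preserve $k$, and discharging proofs for ordinary coloring often rely on fixing specific colors, a move unavailable with arbitrary lists. One small addition worth making explicit: the paper's footnote to the Bousquet--Perarnau-style conjecture shows how, for lists large enough ($\ge 2d+2$), the Extension Lemma alone gives the linear bound, so the meta-conjecture is only genuinely at issue in the regime $d+3\le k\le 2d+1$ (and its analogues for other classes); your ``slack'' discussion implicitly lives there but could say so. Beyond that, there is nothing to fault here: a blind attempt at an open conjecture cannot be expected to close it, and you have correctly identified both its status and the obstacles.
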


In particular, we pose this conjecture when $\mathcal{G}$ is (i) the class of all graphs with maximum average 
degree less than $a$ and (ii) the class of all planar graphs with girth at least $g$.
As evidence in support of Conjecture~\ref{meta-conj} holding in case (ii), we summarize below how many colors are needed
to imply that $\diam~\CL{G} = O(|G|)$ for planar graphs with various lower bounds on girth.

\begin{table}[!h]
\centering
\begin{tabular}{c|ccccc}
girth $\ge$ & 3 & 4 & 5 & 6 & 11\\
\hline
\# colors & 10 & 7 & 6 & 5 & 4 
\end{tabular}
\captionsetup{justification=centering}
\caption{Best known bounds on list sizes needed for planar\\ graphs of various girths to ensure $\diam~\CL{G}=O(|G|)$.~~~~}
    \end{table}

In the case of coloring (without lists), the first two columns were proved by {\dvorak} and Feghali~\cite{DF2,DF1}.
They also conjectured the third column.  Furthermore, they conjectured that the first two columns also hold
in the more general context of list coloring.  

In~\cite{BBFHMP}, its authors proved the fourth column.  And their proof also works for list-coloring.
In the same paper, they revisited the analogue of the first column for list-coloring.  
Since planar graphs are 5-degenerate, Lemma~\ref{linear-cereceda:lem} implies the desired bound for lists of size 12.  
The authors extended this result to lists of size 11 using the Extension Lemma and the following result of Borodin: 
If $G$ is planar with minimum degree 5, then $G$ has a 3-face $vwx$ with $d(v)+d(w)+d(x)\le 17$.

In~\cite{recoloring-sparse}, the present author proved the third and fifth columns, even for list-coloring.
The same paper proved the list-coloring analogue of the second column.  However, the problem of a list-coloring analogue
of the first column, as conjectured by {\dvorak} and Feghali~\cite{DF1}, remained open.
The authors of~\cite{planar-forbidden} proved the conjecture for certain planar graphs with 
maximum average degree less than $5.\overline{3}$.
Our main contribution in this paper confirms this conjecture for all planar graphs.

\begin{mainthm}
There exists a constant $C$ such that if $G$ is a planar graph with a $10$-assignment $L$ and with $L$-colorings $\a$ and $\b$,
then there exists a recoloring sequence from $\a$ to $\b$ that recolors each vertex at most $C$ times.
\end{mainthm}

A \Emph{configuration} is a subgraph $H$ along with prescribed values of $d_G(v)$ for each vertex $v$ in $H$.
A configuration is \Emph{reducible} if it cannot appear in a $k$-minimal counterexample $G$ (see the start
of the Section~\ref{reducibility-sec}).
Our proof of the Main Theorem uses the discharging method, with 36 reducible configurations; 
Figures~\ref{node-shapes-fig},\ref{main-reduc-fig},\ref{more-reduc-fig} show example configurations.  At first glance, 
verifying reducibility for so many configurations seems to be a daunting task.  However, we introduce a new technique to handle 
most of the configurations in just over 4 pages, without any reliance on computer verification.  

A vertex $w_2$ \emph{defers to} a neighbor $w_1$ if $w_1$, when it must be recolored, need not avoid the color $c$ on $w_2$.  If
$w_1$ wants to use $c$, then it can, but $w_2$ must first be recolored to avoid $c$.  When we apply the Extension Lemma to a vertex
$v$, that vertex implicitly defers to each of its neighbors in $G-v$.  Our technique allows for the possibility that, with neighbors 
$w_1$ and $w_2$, neither defers to the other; rather, whenever we recolor $w_1$ we must avoid the color currently on $w_2$, and 
vice versa.
We also introduce a method to quickly bound the total number of times that a vertex will be recolored in this more general context.
Specifically, we are able to bound the number of recoloring steps needed for various vertices in a configuration; and these 
computations largely decouple to be independent of each other.  As a result, we can rapidly generate many configurations that
cannot appear in our minimal counterexample.

We believe this technique will be of independent interest, and will likely
have application to other problems in this area.  In Section~\ref{reducibility-sec} we show that various configurations are 
reducible, and in Section~\ref{unavoidability-sec} we use discharging to show that every planar graph contains at
least one of these reducible configurations, and hence cannot be a $k$-minimal counterexample.

Our terminology and notation are standard, however we highlight a few terms.  If $d(v)=k$, for some vertex $v$, then $v$
is a \Emph{$k$-vertex}; if $d(v)\ge k$ or $d(v)\le k$, then $v$ is respectively a \emph{$k^+$-vertex} or a 
\emph{$k^-$-vertex}.\aside{$k^+/k^-$-vertex}
If $v$ is a $k$-vertex and $v$ is a neighbor of some vertex $w$, then $v$ is a \EmphE{$k$-neighbor}{5mm}; analogously, we define 
\emph{$k^+$-neighbor} and \emph{$k^-$-neighbor}.  A \emph{planar graph} is one that can be embedded in the plane with no edge
crossings.  A \Emph{plane graph} is a fixed plane embedding of a planar graph.
A graph is \Emph{$d$-degenerate} if every subgraph $H$ has some $v\in V(H)$ such that $d_H(v)\le d$.

\section{Reducibility}
\label{reducibility-sec}

Fix a positive integer $k$.  
A recoloring sequence is \Emph{$k$-good} if every vertex is recolored at most $k$ times.
We aim to prove the follow: Fix a plane graph $G$ and a $10$-assignment $L$ for $G$.
For any two $L$-colorings $\a$ and $\b$ of $G$, there exists a $k$-good recoloring sequence $\sigma$ from $\a$ 
to $\b$.  A \Emph{$k$-minimal counterexample} (to this statement) is a 
graph $G$ such that the statement is false, but the statement is true for every plane graph with fewer vertices than $G$.
We assume that $G$ is a $k$-minimal counterexample, and reach a contradiction.

A \Emph{reducible configuration} (for some fixed $k$) is a subgraph $H$, with prescribed values $d_G(v)$ for all $v\in V(H)$,
that cannot appear in a $k$-minimal counterexample.  In this section, we compile a large set of reducible configurations.
In the next section, we use a counting argument (via the discharging method) to show that every plane graph contains at least
one of those reducible configurations.  Thus, no plane graph $G$ is a $k$-minimal counterexample.  So the Main Theorem holds.

The discharging in
our proof would be simpler if we knew that $G$ was a triangulation.  For many coloring results on planar graphs, this assumption
is easily justified; if $G$ has a non-triangular face, then we add some diagonal of the face, and each coloring of the new graph
is a coloring of the original.  But our present problem is more subtle, since we are not searching for a single coloring, but rather
for a short path between every two colorings.  Suppose that $G$ has a face $v_1v_2v_3v_4$ and a coloring $\a$ with 
$\a(v_1)=\a(v_3)$ and $\a(v_2)=\a(v_4)$.  By adding either $v_1v_3$ or $v_2v_4$ we form a graph for which $\a$ is no longer a
proper coloring.  Nonetheless, we show (see Lemma~\ref{edge-remove-lem} and the comment at the start of 
Section~\ref{discharging-sec}) that we can triangulate $G$, only for the discharging.  If we find a reducible configuration $H$ 
in this triangulation $G^+$, then $H$ also restricts to a reducible configuration in $G$.

Recall that in the proof of the Extension Lemma, we might need a single step at the end to recolor the deleted vertex $v$ to 
agree with $\beta$.  In the present more general context, this final recoloring phase is handled by our next proposition.

\begin{prop}
Fix a plane graph $G$, a subgraph $H$, a 10-assignment $L$, and $L$-colorings $\a$ and $\b$ that differ only on $H$. 
(i) Now $G$ has a recoloring
sequence from $\a$ to $\b$ that recolors each vertex of $H$ at most twice (and recolors no other vertices) whenever there exists
an order $\sigma$ of $V(H)$ such that $d_G(v)+d_{\sigma}(v)\le 9$ for all $v\in V(H)$, where $d_{\sigma}(v)$ is the number of 
neighbors of $v$ that follow $v$ in $\sigma$.  (ii) In particular, this is true whenever $H$ is 2-degenerate and $d_G(v)\le 7$
for all $v\in V(H)$.
\label{finishing-prop}
\end{prop}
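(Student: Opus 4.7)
The plan is to transform $\alpha$ into $\beta$ via two sweeps over $V(H)$ that run in opposite orders, recoloring each vertex of $H$ at most once per sweep. Write $V(H)=\{v_1,\dots,v_n\}$ in the order given by $\sigma$. In Phase~1 I would process $v_1,\dots,v_n$ in order and replace $\alpha(v_i)$ by an intermediate color $\gamma(v_i)$; in Phase~2 I would process $v_n,\dots,v_1$ in the reverse order and replace $\gamma(v_i)$ by $\beta(v_i)$. Since $\alpha$ and $\beta$ agree off $H$, no vertex outside $V(H)$ is ever recolored, and each vertex in $V(H)$ is touched at most twice.

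The only real design question is how to pick $\gamma(v_i)$ so that both phases work. For Phase~1 properness, at the moment $v_i$ is processed its neighbors carry $\gamma(v_j)$ for $j<i$, $\alpha(v_j)$ for $j>i$, and $\alpha(w)=\beta(w)$ for $w\notin V(H)$, so $\gamma(v_i)$ must avoid at most $d_G(v_i)$ colors. For Phase~2 feasibility, when $v_i$ is later processed its neighbors carry $\beta(v_j)$ for $j>i$, $\gamma(v_j)$ for $j<i$, and $\beta(w)$ for $w\notin V(H)$; the values $\beta(v_j)$ and $\beta(w)$ automatically differ from $\beta(v_i)$ because $\beta$ is proper, so the only risk is that $\gamma(v_j)=\beta(v_i)$ for some neighbor $v_j$ with $j<i$. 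Equivalently, when choosing $\gamma(v_j)$ I would additionally forbid $\beta(v_i)$ for every neighbor $v_i$ following $v_j$ in $\sigma$, which rules out at most $d_\sigma(v_j)$ further colors. Altogether $\gamma(v_j)$ must avoid at most $d_G(v_j)+d_\sigma(v_j)\le 9$ colors out of the $|L(v_j)|=10$ available, so a valid $\gamma(v_j)$ always exists.

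For part (ii), I would take $\sigma$ to be the reverse of a 2-degeneracy order of $H$, so that each $v\in V(H)$ has at most two neighbors of $H$ later in $\sigma$; i.e., $d_\sigma(v)\le 2$. Combined with $d_G(v)\le 7$, this gives $d_G(v)+d_\sigma(v)\le 9$, and (ii) follows from (i).

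The only real subtlety is the decision to run the two sweeps in opposite orders. If both sweeps went forward, the unavoidable Phase~2 conflicts would be indexed by earlier-in-$\sigma$ neighbors of $v_j$, producing a penalty that the hypothesis does not control; running in opposite orders is exactly what converts this penalty into $d_\sigma(v_j)$ and matches the bound $d_G(v)+d_\sigma(v)\le 9$. Once this choice is made, the rest of the argument is a matter of tracking which neighbors carry $\alpha$-, $\gamma$-, or $\beta$-colors at each moment.
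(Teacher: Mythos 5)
Your proof is correct and is essentially the same argument as the paper's: the paper's induction on $|H|$ (recolor the first vertex of $\sigma$ to an intermediate color avoiding current neighbor colors and $\beta$-colors of $H$-neighbors, recurse, then recolor it to $\beta$) unwinds exactly to your forward-then-reverse two-sweep construction with the intermediate coloring $\gamma$. You have simply presented iteratively what the paper presents recursively.
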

\begin{proof}
We use induction on $|H|$, and the base case, $|H|=1$, is trivial.  For the induction step, let $v$ be the first vertex in $\sigma$
and let $H':=H-v$.  Recolor $v$ to avoid all colors currently used on its neighbors, as well as all colors used on its neighbors in 
$\b$. (This is possible because $d_G(v)+d_H(v)=d_G(v)+d_{\sigma}(v)\le 9$, by hypothesis.)  By the induction hypothesis, recolor $H'$
to agree with $\b$, treating $v$ as a vertex of $G$.  Finally, recolor $v$ to agree with $\b$. (ii) Since $H$ is 2-degenerate,
there exists $\sigma$ such that $d_{\sigma}(v)\le 2$ for all $v\in V(H)$.  Thus, $d_G(v)+d_{\sigma}(v)\le 7+2=9$.  So we are done
by (i).
\end{proof}

\begin{rem}
Throughout this paper, in various subgraphs $H$ we use vertex shapes to denote vertex degrees in a larger graph $G$, as follows;
see Figure~\ref{node-shapes-fig}.
Degree 5 is a square, degree 6 is a circle, degree 7 is a pentagon, and degree 8 is a hexagon.  If only a lower bound on degree
is known, then the corresponding node is drawn as dashed.  For example, a vertex of degree at least 6 is drawn as a dashed
circle.  If no nontrivial degree bound is known (or our argument does not depend on such a bound), then we simply do not draw
a shape for the node.  This (absence of shape) will not occur in the present section on reducibility, but will occur 
frequently in the section on unavoidability.  For example, in Figure~\ref{5-86766fig}: vertex $v$ is a $5$-vertex, $w_4$ is a 
$7$-vertex, $w_1$ is an $8^+$-vertex; for each remaining vertex, either it is a $6$-vertex, or we claim no bound on its degree.
\end{rem}
\smallskip

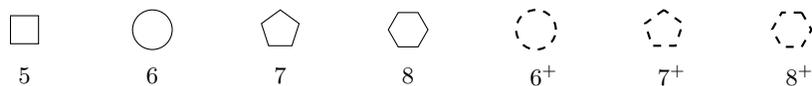
\begin{figure}[!ht]
\centering
\begin{tikzpicture}[yscale=.866, xscale=1.7]
\tikzset{every node/.style=6vertB}
\def\off{7mm}

\draw (0,0) node[5vertB] {};
\draw (0,-\off) node[draw=none] {\footnotesize{$5$}};
\draw (1,0) node[6vertB] {};
\draw (1,-\off) node[draw=none] {\footnotesize{$6$}};
\draw (2,0) node[7vertB] {};
\draw (2,-\off) node[draw=none] {\footnotesize{$7$}};
\draw (3,0) node[8vertB] {};
\draw (3,-\off) node[draw=none] {\footnotesize{$8$}};
\draw (4,0) node[6vertB,dashed, thick] {};
\draw (4,-\off) node[draw=none,fill=none] {\footnotesize{~~$6^+$}};
\draw (5,0) node[7vertB,dashed, thick] {};
\draw (5,-\off) node[draw=none,fill=none] {\footnotesize{~~$7^+$}};
\draw (6,0) node[8vertB,dashed, thick] {};
\draw (6,-\off) node[draw=none,fill=none] {\footnotesize{~~$8^+$}};
\end{tikzpicture}
\caption{Vertex degree as denoted by shape\label{node-shapes-fig}}
\end{figure}

Before introducing our general approach, we first illustrate it on one particular small configuration.  The general outline of the
proof draws heavily on ideas from the proof of the Extension Lemma.  However, that lemma is not well-suited for the case of
$k$-assignments when $k$ is even (such as 10).  And it is easy to check that we cannot use it directly to prove reducibility
of any of our reducible configurations.  So, rather than deleting a single vertex at a time, we instead delete all vertices in
the configuration $H$ at once.  When we extend our recoloring sequence from $G-H$ to $G$, we generally do not have any
of the vertices of $H$ ``defer'' to each other.  That is, for vertices $v,w\in H$ with $vw\in E(H)$, if vertex $v$ 
must be recolored due to some
upcoming recoloring in $N_G(v)\setminus N_H(v)$, then $v$ must avoid the color currently on $w$.  But also $w$ will need to avoid
the color used on $v$ when roles are reversed.  (Still, there are some exceptions to this general rule.)

\begin{lem}
The subgraph $H$ in Figure~\ref{first-reduc-fig} 
appears in no $k$-minimal counterexample with $k\ge 828$.
\label{first-reduc-lem}
\end{lem}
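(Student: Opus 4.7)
The plan is to fix a $k$-minimal counterexample $G$ containing the configuration $H$ in Figure~\ref{first-reduc-fig} (whose vertices I'll call $v_1,\ldots,v_h$), let $G':=G-V(H)$, and apply $k$-minimality to $G'$. Writing $\a'$, $\b'$ for the restrictions of $\a$, $\b$ to $G'$, we get a $k$-good recoloring sequence $\sigma'$ from $\a'$ to $\b'$. The job is then to weave recolorings of the $v_i$ into $\sigma'$ to produce a $k$-good sequence for $G$ ending at a coloring that differs from $\b$ only on $H$, and finally to invoke Proposition~\ref{finishing-prop} to clean up on $H$ itself at a cost of at most $2$ recolorings per vertex of $H$.

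The core of the argument is an Extension-Lemma–style bookkeeping, but run in parallel over all $v_i$ at once rather than one vertex at a time. For each $v_i$ let $a_i := |L(v_i)|-d_G(v_i)-1 = 9-d_G(v_i)$, let $s_i$ be the number of times $\sigma'$ recolors vertices of $N_G(v_i)\cap V(G')$ (so $s_i\le k\cdot|N_G(v_i)\cap V(G')|$ by $k$-minimality), and let $r_i$ be the number of times our extension recolors $v_i$. Whenever a neighbor $u\notin H$ of $v_i$ is about to take $v_i$'s current color, or whenever a neighbor $v_j\in H$ of $v_i$ is recolored to $v_i$'s current color, we must first recolor $v_i$; using the standard look-ahead trick (choose a color unused among the next $a_i$ conflicting recolorings), this yields
\[
 r_i \;\le\; \Ceil{\frac{s_i+\sum_{v_j\sim v_i \text{ in } H} r_j}{a_i}} \;+\; 1.
\]
These inequalities form a linear system in the $r_i$, with coefficient matrix determined by the adjacencies inside $H$ and the values $a_i$ read off from the prescribed degrees in Figure~\ref{first-reduc-fig}.

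Next I would solve the system explicitly: plugging in the degrees shown in the figure fixes each $a_i$, the terms $|N_G(v_i)\cap V(G')|$ are bounded by $d_G(v_i)-d_H(v_i)$, and one can iterate/fixed-point the inequality to get a closed-form bound $r_i\le f(k)$. A judicious choice of which $v_i$ are allowed to ``defer'' to neighbors in $H$ (the author hints that this is rarely done, but done at some exceptional vertices) will be needed to keep the feedback in the system from blowing up. After this I would add the final $+2$ from Proposition~\ref{finishing-prop}(ii) --- checking that $H$ is $2$-degenerate and each prescribed degree is at most $7$ --- and verify that the resulting bound is $\le k$ whenever $k\ge 416$. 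The constant $416$ should then fall out as the smallest $k$ for which the solved system yields $f(k)+2\le k$ for every $v_i\in H$.

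The main obstacle will be the parallel version of the Extension Lemma: once intra-$H$ recolorings appear on the right-hand side, the $r_i$'s feed back into each other, so a careless bound produces a geometric cascade and no constant $C$ works. The trick will be to exploit the structure of $H$ (low $d_H(v_i)$ at the ``bottleneck'' vertices, plus the fact that vertices with small $a_i$ sit next to vertices with large $a_i$) to show that the matrix governing the fixed-point iteration has spectral radius bounded away from $1$, so that the system admits a finite solution. Once that is in place, the numerical value $k\ge 416$ is just arithmetic.
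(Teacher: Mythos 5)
Your setup matches the paper's: delete all of $H$, invoke $k$-minimality on $G-V(H)$, weave in recolorings of the $H$-vertices, and finish with Proposition~\ref{finishing-prop}. You also correctly flag the central obstacle---feedback among the $r_i$'s---but your proposed resolution would not work, and the key mechanism of the paper's proof is missing from your sketch.

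The trouble is the term $\sum_{v_j\sim v_i} r_j$ in your recurrence, which is proportional to $k$. For this configuration ($d_G=5$ at $v,z$ with $d_H(v)=1$, $d_H(z)=2$; $d_G=6$ at $w,x,y$ with $d_H=3$), solving your linear system (ignoring ceilings) gives $r_x=r_y\approx 229k/34\approx 6.7k$, far above $k$. The matrix does have spectral radius below $1$---so a finite fixed point exists---but that is nowhere near sufficient: the fixed point must be at most $k$ componentwise, and it is not. In fact there is no choice of which edges to keep in your system that would make it work: the constraint $(\star)$, $d_H(v)\ge 2d_G(v)-9$, holds with equality at four of the five vertices, so already the homogeneous ``no-feedback'' bound $\lceil s_i/a_i\rceil$ sits exactly at $k$, and any $\Theta(k)$ feedback blows it up.

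The paper avoids the feedback altogether rather than controlling its spectral radius. By default, whenever a vertex of $H$ is recolored it avoids \emph{all} current neighbor colors, including those in $H$; consequently one $H$-vertex never forces another to move, and the intra-$H$ term in your recurrence simply vanishes. The price is that every $v_i$ with $(\star)$ tight ends up at $\lceil s_i/a_i\rceil + 2 = k+2$, a constant over budget. The fix is \emph{constant-bounded deferral}: for a fixed constant window (the $40, 120, 384, 24$ in the paper), a vertex is allowed to drop the constraint from exactly one designated $H$-neighbor, buying one extra spare color and hence a slightly better look-ahead ratio on that window. That one neighbor absorbs a constant number of extra recolorings, and the chain of deferrals is organized as an out-tree rooted at the single ``well'' $z$, which satisfies $(\star)$ strictly and therefore has genuine slack $\lfloor k/4\rfloor$ to absorb the total constant overhead (this is exactly where $k\ge 416$ comes from). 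So the dependency of $r_i$ on its $H$-neighbors is a bounded constant, not $r_j/a_i$, and the ``matrix'' you want is nilpotent (a DAG), not merely contracting. Without this directed, constant-budget deferral and the identification of the well, the proof does not close.
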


\noindent
\textit{Proof.}
Suppose the lemma is false.
Fix a plane graph $G$, an integer $k\ge 828$, a $10$-assignment $L$, and $L$-colorings $\a$ and $\b$ witnessing this.
Denote $V(H)$ by $v,w,x,y,z$, as shown in Figure~\ref{first-reduc-fig}.  
Let $G':=G-V(H)$.  Let $\a'$ and 
$\b'$ denote the restrictions to $G'$ of $\a$ and $\b$.  Since $G$ is $k$-minimal, there exists a 
$k$-good recoloring sequence $\sigma'$ from $\a'$ to $\b'$.  We show how to extend 
$\sigma'$ to a $k$-good recoloring sequence for $G$ from $\a$ to $\b$.
By Proposition~\ref{finishing-prop}, once we have recolored $G$ to match $\b$ on $V(G)\setminus V(H)$, we can recolor
$V(H)$ to match $\b$ by recoloring each vertex of $H$ at most 2 times.

Let $a_1,\ldots$ denote the sequence of colors that appear on $N_G(v)\setminus N_H(v)$, in order, during $\sigma'$.
Similarly, we define $b_1,\ldots$ for $w$; $c_1,\ldots$ for $x$; $d_1,\ldots$ for $y$; and $e_1,\ldots$ for $z$.  
To extend $\sigma'$ to $\sigma$, each time that the current color on $y$, 
call it $\vph(y)$ appears as some $d_i$ on some vertex in $N_G(y)\setminus N_H(y)$, we first recolor vertex 
$y$, and possibly some other vertices in $H$ to avoid $d_i$.  When we are recoloring $y$ to avoid $d_i$, we must avoid the
colors currently used on all neighbors of $y$ (both outside and inside $H$).  However, since $|L(y)|\ge 10$ and $d(y)=5$,
we have at least $10-5=5$ available colors for $y$, and we can choose one to avoid $\{d_i, d_{i+1}, d_{i+2}, d_{i+3}\}$.
By hypothesis, each neighbor of $y$ is recolored at most $k$ times in $\sigma'$, so when we extend $\sigma'$
to $H$ vertex $y$ is recolored at most $4k/4 = k$ times.  However, we may need $2$ more recoloring steps at the end
of this sequence to get the correct colors on $H$.  So we need something slightly more subtle.

\begin{wrapfigure}{l}{0.3\textwidth}
\centering
\begin{tikzpicture}[yscale=.866, xscale=.5, scale=1.0]
\tikzstyle{5vert}=[mysquare, inner sep = 2pt, outer sep = 0pt, draw, fill=white]
\tikzstyle{6vert}=[shape = circle, minimum size = 15.0pt, inner sep = 2pt, outer sep = 0pt, draw, fill=white]
\tikzstyle{7vert}=[mypent, minimum size = 15.0pt, inner sep = 2pt, outer sep = 0pt, draw, fill=white]
\tikzstyle{unvert}=[draw=none, fill=white]
\tikzstyle{uvert}=[draw=none, fill=none]
\tikzset{every node/.style=6vert}
\draw (0,0) node[5vert] (a) {$v$} -- (1,1) node (b) {$w$} -- (2,0) node (c) {$z$} -- (3,1) node (d) {$x$} -- (4,0) node[5vert] (e) {$y$} (b) -- (d) (a) -- (c);
\end{tikzpicture}
\captionsetup{width=.22\textwidth}
\caption{This configuration appears in no $k$-minimal counterexample with $k\ge 828$.\label{first-reduc-fig}}
\end{wrapfigure}
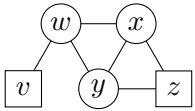

While $i\le 40$, when we need to recolor $y$ to avoid color $d_i$, we do not require $y$ to avoid the color currently
on $x$; thus $y$ has at least $5$ available colors, and can avoid the colors on $N_G(y)\setminus\{x\}$, as well as  
$d_i, d_{i+1}, d_{i+2}, d_{i+3}, d_{i+4}$.  If we want to recolor $y$ with the color $\vph(x)$ on $x$, then we 
first recolor $x$ to avoid $\vph(x)$ and all colors currently used on $N(x)$, as well as $c_{j}$ and $c_{j+1}$, the next
two colors appearing on $N(x)\setminus N_H(x)$.  After recoloring $x$, we recolor $y$ with $\vph(x)$ as desired.
So the number of times we recolor $y$ is at most $(4k-40)/4 + 40/5+2 = k$.  (The `+2' comes from Proposition~\ref{finishing-prop}.)

Now we do something similar for $x$.  In general, when we need to recolor $x$ to avoid $c_i$, we do it to avoid each color
currently on $N(x)$, as well as to avoid $c_i, c_{i+1}, c_{i+2}$.  However, when $i\le 144$ we do not require that $x$ avoids
the color $\vph(w)$ currently on $w$.  As above, if we want to recolor $x$ with $\vph(w)$, then we first recolor $w$ to avoid
each color currently on $N[w]$, as well as $b_{j}$ and $b_{j+1}$.  After recoloring $w$, we recolor $x$ with $\vph(w)$.
Thus, the number of times we recolor $x$ is at most $(3k-144)/3 + (144+40/5)/4+40/5+2 = k$.
Here the first term comes from recoloring $x$ to avoid colors $b_{145},\ldots$; 
the third from recoloring if $y$ wants the color on $x$; 
and the fourth from recoloring $x$ at most twice at the end to get the desired colors on $V(H)$.
Below we consider the second term.

The $144$ in the numerator is clear, but the `$+40/5$' is more subtle.  
The issue is that when $x$ defers to $y$, vertex $x$ can only avoid $c_i, c_{i+1}, c_{i+2}$, the next
3 colors that will appear on $N(w)\setminus N_H(w)$.  But if $x$ was just recolored to avoid the next $4$ colors (since $i\le 144$), then $x$ will
effectively ``lose ground'' of one color in this sequence;  specifically, this happens if $x$ must defer to $y$ before color $c_i$
appears in $N_G(x)\setminus N_H(x)$.
This loss of ground occurs at most $40/5=8$ times, 
once each time that $x$ must be recolored because it defers to $y$.
(So $w$ defers to $x$ at most $(144+8)/4=152/4$ times.)

For $w$, in general when we need to recolor $w$ to avoid $b_i$, we do it to avoid each color currently on $N(w)$, as well
as to avoid $b_i, b_{i+1}, b_{i+2}$.  However, when $i\le 594$ we do not require that $w$ avoids the color $\vph(v)$ currently
on $v$; and we recolor $v$ first if we want to recolor $w$ with $\vph(v)$.
Thus, the number of times we recolor $w$ is at most $(3k-594)/3 + (594+(152/4))/4+152/4+2 = k$.

For $z$, in general when we need to recolor $z$ to avoid $e_i$, we do it to avoid each color currently on $N(z)$, as well
as to avoid $e_i, e_{i+1}, e_{i+2}$.  However, when $i\le 24$ we do not require that $z$ avoids the color $\vph(v)$ currently
on $v$; and we recolor $v$ first if we want to recolor $z$ with $\vph(v)$.
Thus, the number of times we recolor $z$ is at most $(3k-24)/3 + 24/4+2 = k$.

For $v$, whenever we need to recolor $v$ to avoid $a_i$, we do it to avoid each color currently on $N(v)$, as well
as to avoid $a_i, a_{i+1}, a_{i+2}, a_{i+3}$.  
Thus, the number of times we recolor $v$ is at most $\ceil{(3k+(632/4)+(24/4))/4} + 632/4 + 24/4 + 2\le k$; 
here we use that $k\ge 828$.

We remark that it is possible that some vertex $u$ outside $H$ is about to be recolored with a color currently used on two
vertices of $H$.  In that case, we can break the tie arbitrarily.  For example if the color is currently used on vertices $v$
and $y$, then we first remove it from $v$ and later remove it from $y$, or vice versa.
(The proof also works if vertices $v$ and $y$ are identified, but we defer these details to the end of the proof of 
Lemma~\ref{main-reduc-lem}.)
\hfill$\qed$

\bigskip

Lemma~\ref{main-reduc-lem} is the main reducibility lemma proved in this section.  But before we get to it,
we address one other case, which is not quite handled by the general arguments used to prove Lemma~\ref{main-reduc-lem}.

\begin{lem} 
A $7^-$-vertex with no $7^+$-neighbor is $k$-reducible (for all $k\ge 44$).
\label{second-reduc-lem}
\end{lem}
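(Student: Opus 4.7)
I would pattern the proof on Lemma~\ref{first-reduc-lem}, choosing a subgraph $H$ adapted to the present single-vertex hypothesis. Fix a would-be $k$-minimal counterexample $G$ with $k \ge 44$ and a $7^-$-vertex $v \in V(G)$ whose neighbors are all $6^-$-vertices. In the easy range $d_G(v) \le 4$, taking $H := \{v\}$ and applying the Extension Lemma gives $v$ recolored at most $\lceil d_G(v)\cdot k/(|L(v)|-d_G(v)-1)\rceil + 1 \le \lceil 4k/5\rceil + 1 \le k$, so from here on I would focus on the hard range $d_G(v) \in \{5,6,7\}$.

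For $d_G(v) \in \{5,6,7\}$ I would take $H := \{v\} \cup S$, where $S \subseteq N_G(v)$ is a set of defenders large enough that the extension of $\sigma'$ keeps $v$'s count under $k$. For concreteness, for $d_G(v)=7$ I would take $|S|=3$ with $S$ chosen so its induced subgraph is as dense as possible (three consecutive neighbors along the cyclic link of $v$, exploiting the fact that a $k$-minimal counterexample may be assumed to be a triangulation near $v$). Deleting $V(H)$ and invoking $k$-minimality yields a $k$-good recoloring sequence $\sigma'$ of $G-V(H)$, which I would extend to $\sigma$ on $G$ by interleaving defender-trick preemptions in exactly the style of Lemma~\ref{first-reduc-lem}: whenever $\sigma'$ is about to recolor a neighbor of $v$ with the current color of $v$, first recolor $v$, and when choosing $v$'s new color, allow it to coincide with the current color of some defender $w_j \in S$ provided we first recolor $w_j$. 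Doing this for all three defenders boosts $v$'s effective lookahead from $|L(v)|-d_G(v)-1=2$ to $|L(v)|-(d_G(v)-|S|)-1 = 5$, giving $v$ the amortized bound $4k/5 + O(1)$ responses, safely below $k$. Each defender $w \in S$ is analyzed in parallel, using tricks on its own $H$-neighbors (always $v$, plus whichever vertex of $S$ is adjacent to $w$) together with the hypothesis $d_G(w) \le 6$ to bound its own-response count, while absorbing the triggers from $v$ and the other defenders by tuning the corresponding trick-threshold parameters $m_v, m_{w_1}, m_{w_2}, m_{w_3}$.

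\textbf{The main obstacle} is controlling the feedback cascade: each defender trick used by $v$ forces a preemptive recoloring of some $w \in S$, each trick used by $w$ on $v$ forces a preemptive recoloring of $v$, and tricks among the defenders themselves can cascade further. One must choose the threshold parameters simultaneously so that every vertex of $H$ finishes with at most $k$ recolorings, and the numerical value $k \ge 44$ is the smallest for which the resulting system of linear inequalities is satisfiable. The final, constant-cost cleanup is supplied by Proposition~\ref{finishing-prop}(i), applied to the ordering of $V(H)$ that places $v$ last and lists the defenders so that each has forward $H$-degree at most $2$; since every vertex of $H$ has $d_G\le 7$, this delivers the last two recolorings per $H$-vertex needed to match $\b$.
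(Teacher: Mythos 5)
Your plan departs from the paper's proof in the crucial choice of the deleted subgraph $H$, and this departure creates a real obstruction rather than a parameter-tuning inconvenience. The paper takes $H := N[v]$, the entire closed neighborhood. The payoff is that $v$ has \emph{zero} neighbors outside $H$, so $v$ never needs to be recolored in response to outside pressure; it only recolors when some $w_i$ asks for its color. That makes $v$ a ``well'' (in the sense of Lemma~\ref{main-reduc-lem}, inequality $(\star)$ holds strictly at $v$), the deferral is one-directional ($w_i \to v$) and is only invoked for a constant prefix of $24$ colors, and one reads off that $v$ is recolored at most $(24/4)d(v)+2 \le 44$ times while each $w_i$ is recolored at most $(3k-24)/3 + 24/4 + 2 = k$ times. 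The triangulation structure near $v$ is what guarantees each $w_i$ has three $H$-neighbors ($v$ and the two cyclically adjacent $w_j$'s), hence at most $3$ outside neighbors, which is exactly what the count $(3k-24)/3$ needs.

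By contrast, your $H = \{v\} \cup S$ with $|S|=3$ consecutive defenders leaves $v$ with $4$ outside neighbors when $d_G(v)=7$. Now $(\star)$ fails at $v$ (it has $d_H(v)=3 < 2\cdot 7 - 9 = 5$), and it also fails at the two end defenders $w_1, w_3$: each has $d_H = 2$ (adjacent only to $v$ and $w_2$ inside $H$) but in the triangulation has $4$ outside neighbors, while $2\cdot 6 - 9 = 3 > 2$. So \emph{three of the four} vertices of $H$ are overdrawn, and the only vertex meeting $(\star)$, namely $w_2$, meets it with equality --- there is no well anywhere in $H$. Concretely: without deferral, $w_1$ alone would be recolored roughly $4k/3 > k$ times, so it must defer to $v$ and $w_2$ \emph{throughout} the sequence, not just for a constant prefix; each such deferral may trigger a recoloring of $v$, giving $v$ on the order of $k$ additional recolorings on top of its own $\sim 4k/5$ from outside pressure. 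Your sentence asserting $v$'s amortized count is ``$4k/5 + O(1)$, safely below $k$'' silently drops exactly these deferral-induced triggers, which are not $O(1)$ in this configuration. The resulting deferral pattern is also genuinely cyclic ($v \leftrightarrow w_j$ and $w_1 \leftrightarrow w_2 \leftrightarrow w_3$), which is precisely what the out-tree framework of Lemma~\ref{main-reduc-lem} is designed to avoid; there is no satisfiable system of threshold parameters here, so the assertion that $k\ge 44$ is ``the smallest $k$ for which the system is satisfiable'' is not something your setup produces. The fix is the one the paper uses: enlarge $S$ to all of $N(v)$, which simultaneously zeroes out $v$'s outside pressure and gives every $w_i$ three $H$-neighbors.
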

\begin{proof}
Suppose our $k$-minimal counterexample $G$ contains a $7^-$-vertex $v$ with only $6^-$-neighbors.  
Denote these neighbors by $w_1,\ldots,w_{d(v)}$.  As in the previous lemma, we get a recoloring sequence $\sigma'$
for $G-N[v]$ and extend it to $N[v]$.  For each of the first 24 colors that appears in $N_G(w_1)\setminus N_{N[v]}(w_1)$
if we want to recolor $w_1$ with the color currently on $v$, then we do so, after first recoloring $v$ to avoid all
colors currently used on $N[v]$.  Thus, the number of times that $w_1$ is recolored is at most $(3k-24)/3+24/4+2=k$.
We handle each other $w_i$ identically.  Thus, the number of times that $v$ is recolored is at most $(24/4)d(v)+2\le 44$.
(Our sequence $\sigma$ needed for Proposition~\ref{finishing-prop} exists because $H$ is $3$-degenerate, since $H-v$ is outerplanar.)
\end{proof}

For convenience, we record the reducible configurations from Lemma~\ref{second-reduc-lem} in 
Figure~\ref{main-reduc-fig}(\subref{5165a-fig},%
\subref{67a-fig},\subref{6771a-fig}), along with all the configurations that we handle in our next lemma.

\bigskip

The argument proving Lemmas~\ref{first-reduc-lem} and~\ref{second-reduc-lem} 
works much more generally, and we will use it (in a more general form) to show that 
many more configurations are $k$-reducible for $k\ge 828$.  For these remaining configurations our proofs are more terse, 
closer to just a certificate of reducibility. 

\begin{lem}
Each of (RC\subref{53a-fig})--(RC\subref{546671b-fig}), shown in Figure~\ref{main-reduc-fig}, is $k$-reducible for all $k\ge 828$.
And if we decrease one or more vertex degrees in any of these configurations, the result is again $k$-reducible.
\label{main-reduc-lem}
\end{lem}
\begin{proof}
For a connected subgraph $H$ to be reducible, we require (a) that
\begin{align}
d_H(v) \ge 2d_G(v) - 9
\label{reduc-ineq}
\tag{$\star$}
\end{align}
for all vertices $v\in V(H)$.
The $H$ that we show reducible will have $d_G(v)\in\{5,6,7\}$ for each $v\in V(H)$.  We also require (b) that there exists some 
vertex $v$ such that inequality~\eqref{reduc-ineq} holds with strict inequality.  Finally, we require (c) that there exists an
order $\sigma$, as in Proposition~\ref{finishing-prop}, of $V(H)$ such that $d_G(v)+d_{\sigma}(v)\le 9$ for all $v\in V(H)$.
(Typically, $\sigma$ will be a 2-degeneracy order for $H$.) Every such subgraph $H$ will be $k$-reducible for sufficiently large 
$k$.  We highlight that these criteria are easy to quickly check by hand.  So it is simple for a person to identify reducible 
configurations while constructing a discharging proof.
The exact value of $k$ is not so important; the key conclusion of our proof will be that $G$ has a recoloring sequence of length 
$O(|G|)$.  However, as mentioned above we will show that (for all the configurations in Figure~\ref{main-reduc-fig}) 
we can let $k=828$.  Near the end of this section, we explain how we can reduce this to $k=520$.

A \Emph{well}, in a subgraph $H$ of $G$, is a vertex $v$ such that the inequality in~\eqref{reduc-ineq} is strict.  
To prove that a configuration
is reducible, we exhibit a directed acyclic graph in which each component is an out-tree (a tree directed outward from a root) that
is rooted at some well.  Intuitively, the well provides support to each other vertex in its out-tree.  As the diameter (and order)
of the out-tree $\vec{T}$ increase, so does the smallest $k$ for which $\vec{T}$ can be used to prove a configuration is 
$k$-reducible.

We write $r_s$ to denote a vertex of degree $r$ in $G$ and degree $s$ in $H$.  Our wells will typically be $5_2$-vertices and
$6_4$-vertices, although we occasionally use $5_3$-, $5_4$-, $5_5$-, $6_5$-, or $6_6$-vertices.  For example, 
Figure~\ref{outtree-fig}(4) shows a $6_4$-vertex with four $6_3$-neighbors, two of which each have their own $5_1$-neighbors.
Our calculations will consider together all the 
vertices in a single out-tree, showing that each is recolored at most $k$ times.

When an out-tree has an isomorphism mapping two vertices to each other, the calculations we perform for each of these two 
vertices will be identical.  So we typically label them with the same letter, and different subscripts, to highlight this;
and we only perform the calculations once.

We now proceed with the calculations for four specific out-trees, shown in Figure~\ref{outtree-fig}; we call the set of these $\Jc$. 
We say a bit more later about the larger set of out-trees $\prune(\Jc)$ that are also handled by our proofs for these four.  
But, for concreteness, it is best to now handle these four.  The calculations below follow the lines of those in the proof 
of Lemma~\ref{first-reduc-lem}, so we direct the reader there for more details.

As we have done previously, we denote each $5$-vertex by a square and each $6$-vertex by a circle. 
Pendent half-edges denote edges to vertices in subgraph $H$, but do not specify the other endpoints of these edges.
We list most of our reducible configurations in Figure~\ref{main-reduc-fig}. 
To prove reducibility for a configuration $H$ it suffices to cover the vertices of $H$ with disjoint out-trees in $\prune(\Jc)$.  

(1) We have $w_1$ defer to $x_1$ for the first $40$ colors that appear in $N_G(x_1)\setminus N_H(x_1)$.
So the number of times that $x_1$ is recolored is at most $\lceil (4k-40)/4\rceil+(40/5)+2=k$.
We have $v$ defer to $w_1$ for the first $144$ colors that appear in $N_G(w_1)\setminus N_H(w_1)$.
So the number of times that $w_1$ is recolored is at most $\lceil (3k-144)/3\rceil+((144+(40/5))/4)+(40/5)+2=k$.
The calculations for $w_2$ and $x_2$ are identical to those for $w_1$ and $x_1$.
Finally, the number of times that $v$ is recolored is at most $\lceil (3k+152/4+152/4)/4\rceil + 152/4 + 152/4 + 2$.
This expression is at most $k$ when $k\ge 4(38+38+2)+38+38=388$.

(2)
The calculations for $x$ and $y$ are identical to those for $w_1$ and $x_1$ in (1).  
Recall that $w$ may be recolored up to $152/4=38$ times due to $x$.
We have $v$ defer to $w$ for the first $594$ colors that appear in $N_G(w)\setminus N_H(w)$.
So the number of times that $w$ is recolored is at most 
$\lceil (3k-594)/3)\rceil+((594+38)/4)+38+2=k$.
We also have $v$ defer to $z$ for the first $24$ colors that appear in  $N_G(z)\setminus N_H(z)$.
So the number of times that $z$ is recolored is at most $\lceil (3k-24)/3\rceil+24/4+2=k$.
Thus, the number of times that $v$ is recolored is at most $\lceil (3k+158+6)/4\rceil+158+6+2$.
This expression is at most $k$ when $k\ge 4(41+158+6+2)=828$.

\begin{figure}[!h]
\centering
\begin{tikzpicture}[yscale=.9]
\tikzset{mysquare/.style={regular polygon,regular polygon sides=4, inner sep=0}} 
\tikzstyle{5vert}=[mysquare, minimum size = 8.5pt, inner sep = 1pt,
outer sep = 0pt, draw, fill=white]
\tikzstyle{6vert}=[shape = circle, minimum size = 16.0pt, inner sep = 2pt,
outer sep = 0pt, draw, fill=white]
\tikzset{every node/.style=6vert}
\def\half{.5}

\begin{scope}[yshift=-.4in]
\draw (0,0) node[5vert, inner sep=2.5pt] (v) {\footnotesize{$v$}} 
--++ (1,.5) node (w1) {\footnotesize{$w_1$}} --++ (1,0) node[5vert] (x1) {\footnotesize{$x_1$}} (v) 
--++ (1,-.5) node (w2) {\footnotesize{$w_2$}} --++ (1,0) node[5vert] (x2) {\footnotesize{$x_2$}};
\draw (w1) --++ (0,-\half) (w2) --++ (0,-\half);
\draw (1,-1.5) node[draw=none] {\footnotesize{(i)}};
\begin{scope}[ultra thick,decoration={
    markings,
    mark=at position 0.7 with {\arrow{>}}}
    ] 
    \draw[postaction={decorate}] (v)--(w1);
    \draw[postaction={decorate}] (w1)--(x1);
    \draw[postaction={decorate}] (v)--(w2);
    \draw[postaction={decorate}] (w2)--(x2);
\end{scope}
\end{scope}

\begin{scope}[yshift=-.4in]
\draw (0,0) node[5vert, inner sep=2.5pt] (v) {\footnotesize{$v$}} 
--++ (1,.5) node (w1) {\footnotesize{$w_1$}} --++ (1,0) node[5vert] (x1) {\footnotesize{$x_1$}} (v) 
--++ (1,-.5) node (w2) {\footnotesize{$w_2$}} --++ (1,0) node[5vert] (x2) {\footnotesize{$x_2$}};
\draw (w1) --++ (0,-\half) (w2) --++ (0,-\half);
\draw (1,-1.5) node[draw=none] {\footnotesize{(1)}};
\begin{scope}[ultra thick,decoration={
    markings,
    mark=at position 0.7 with {\arrow{>}}}
    ] 
    \draw[postaction={decorate}] (v)--(w1);
    \draw[postaction={decorate}] (w1)--(x1);
    \draw[postaction={decorate}] (v)--(w2);
    \draw[postaction={decorate}] (w2)--(x2);
\end{scope}
\end{scope}

\begin{scope}[xshift=1.5in, yshift=-0.4in]
\draw (0,0) node[5vert, inner sep=2.5pt] (v) {\footnotesize{$v$}} --++ (1,.5) node (w) {\footnotesize{$w$}} --++ (1,0) node (x) {\footnotesize{$x$}}
--++ (1,0) node[5vert, inner sep=2.5pt] (y) {\footnotesize{$y$}}
(v) --++ (1,-.5) node (z) {\footnotesize{$z$}};
\draw (w) --++ (0,-\half) (x) --++ (0,-\half);
\draw (z) --++ (0,-\half) (z) --++ (\half,0);
\draw (1.5,-1.5) node[draw=none] {\footnotesize{(2)}};
\begin{scope}[ultra thick,decoration={
    markings,
    mark=at position 0.7 with {\arrow{>}}}
    ] 
    \draw[postaction={decorate}] (v)--(w);
    \draw[postaction={decorate}] (w)--(x);
    \draw[postaction={decorate}] (x)--(y);
    \draw[postaction={decorate}] (v)--(z);
\end{scope}
\end{scope}

\begin{scope}[xshift=3.25in, yshift=-0.2in]
\draw (0,0) node[5vert, inner sep=2.5pt] (v) {\footnotesize{$v$}} 
--++ (1,1) node (w1) {\footnotesize{$w_1$}} --++ (1,0) node[5vert] (x1) {\footnotesize{$x_1$}} 
(v) --++ (1,0) node (w2) {\footnotesize{$w_2$}} --++ (1,0) node[5vert] (x2) {\footnotesize{$x_2$}}
(v) --++ (1,-1) node (y) {\footnotesize{$y$}};
\draw (w1) --++ (0,-\half) (w2) --++ (0,-\half);
\draw (y) --++ (0,-\half) (y) --++ (\half,0);
\draw (1,-2.0) node[draw=none] {\footnotesize{(3)}};
\begin{scope}[ultra thick,decoration={
    markings,
    mark=at position 0.7 with {\arrow{>}}}
    ] 
    \draw[postaction={decorate}] (v)--(w1);
    \draw[postaction={decorate}] (w1)--(x1);
    \draw[postaction={decorate}] (v)--(w2);
    \draw[postaction={decorate}] (w2)--(x2);
    \draw[postaction={decorate}] (v)--(y);
\end{scope}
\end{scope}

\begin{scope}[xshift=5in, yshift=0in]
\draw (0,0) node (v) {\footnotesize{$v$}} 
--++ (1,1.5) node (w1) {\footnotesize{$w_1$}} --++ (1,0) node[5vert] (x1) {\footnotesize{$x_1$}} 
(v) --++ (1,.5) node (w2) {\footnotesize{$w_2$}} --++ (1,0) node[5vert] (x2) {\footnotesize{$x_2$}}
(v) --++ (1,-.5) node (y1) {\footnotesize{$y_1$}}
(v) --++ (1,-1.5) node (y2) {\footnotesize{$y_2$}};
\draw (w1) --++ (0,-\half) (w2) --++ (0,-\half);
\draw (y1) --++ (0,-\half) (y1) --++ (\half,0);
\draw (y2) --++ (0,-\half) (y2) --++ (\half,0);
\draw (1,-2.5) node[draw=none] {\footnotesize{(4)}};
\begin{scope}[ultra thick,decoration={
    markings,
    mark=at position 0.7 with {\arrow{>}}}
    ] 
    \draw[postaction={decorate}] (v)--(w1);
    \draw[postaction={decorate}] (w1)--(x1);
    \draw[postaction={decorate}] (v)--(w2);
    \draw[postaction={decorate}] (w2)--(x2);
    \draw[postaction={decorate}] (v)--(y1);
    \draw[postaction={decorate}] (v)--(y2);
\end{scope}
\end{scope}
\end{tikzpicture}
\caption{The 4 configurations in the proof of Lemma~\ref{main-reduc-lem}\label{outtree-fig}}
\end{figure}

(3)
The calculations are the same as in (1) for $w_1,w_2,x_1,x_2$.  And the calculation for $y$ is the same as for $z$
in (2).  Thus, the number of times that $v$ is recolored is at most $\lceil (2k+38+38+6)/4\rceil+38+38+6+2$.
This expression is at most $k$ when $k\ge 2(38+38+6+2)+(38+38+6)/2=209$.

(4)
The calculations are the same as in (3) 
for $w_1,w_2,x_1,x_2,y_1,y_2$.  
Thus, the number of times that $v$ is recolored is at most $\lceil (2k+38+38+6+6)/3\rceil+38+38+6+6+2$.
This expression is at most $k$ when $k\ge 3(38+38+6+6+2)+38+38+6+6=358$.
\bigskip

\def\sf{.65}
\begin{figure}[!t]
\begin{subfigure}[b]{0.065\linewidth}
\begin{tikzpicture}[yscale=.866, xscale=.5, scale=\sf]
\draw (0,0) node[5vert] (a) {$~$} -- (1,1) node[5vert] (b) {$~$} (a) -- (2,0) node[5vert] (c) {$~$};
%
\begin{scope}[ultra thick,decoration={
    markings,
    mark=at position 0.6 with {\arrow{>}}}
    ] 
    \draw[postaction={decorate}] (a)--(b);
    \draw[postaction={decorate}] (a)--(c);
\end{scope}
\end{tikzpicture}
\subcaption{\label{53a-fig}}
\end{subfigure}
\begin{subfigure}[b]{0.085\linewidth}
\begin{tikzpicture}[yscale=.866, xscale=.5, scale=\sf]
\draw (0,0) node[5vert] (a) {$~$} -- (1,1) node (b) {} -- (2,0) node (c) {} -- (3,1) node[5vert] (d) {$~$};
%
\begin{scope}[ultra thick,decoration={
    markings,
    mark=at position 0.6 with {\arrow{>}}}
    ] 
    \draw[postaction={decorate}] (a)--(c);
    \draw[postaction={decorate}] (d)--(b);
\end{scope}
\end{tikzpicture}
\subcaption{\label{5262a-fig}}
\end{subfigure}
\begin{subfigure}[b]{0.1\linewidth}
\begin{tikzpicture}[yscale=.866, xscale=.5, scale=\sf]
\draw (0,0) node[5vert] (a) {$~$} -- (1,1) node (b) {} -- (2,0) node (c) {} -- (3,1) node (d) {} -- (4,0) node[5vert] (e) {$~$};

\begin{scope}[ultra thick,decoration={
    markings,
    mark=at position 0.6 with {\arrow{>}}}
    ] 
    \draw[postaction={decorate}] (a)--(c);
    \draw[postaction={decorate}] (a)--(b);
    \draw[postaction={decorate}] (b)--(d);
    \draw[postaction={decorate}] (d)--(e);
\end{scope}
\end{tikzpicture}
\subcaption{\label{5263a-fig}}
\end{subfigure}
~%
\begin{subfigure}[b]{0.12\linewidth}
\begin{tikzpicture}[yscale=.866, xscale=.5, scale=\sf]
\draw (0,0) node[5vert] (a) {$~$} -- (1,1) node (b) {} -- (2,0) node (c) {} -- (3,1) node (d) {} -- (4,0) node (e) {} --  (5,1) node[5vert] (f) {$~$} --(e) --(c) -- (a) (b) -- (d);

\begin{scope}[ultra thick,decoration={
    markings,
    mark=at position 0.7 with {\arrow{>}}}
    ] 
    \draw[postaction={decorate}] (a)--(b);
    \draw[postaction={decorate}] (c)--(d);
    \draw[postaction={decorate}] (c)--(e);
    \draw[postaction={decorate}] (e)--(f);
\end{scope}
\end{tikzpicture}
\subcaption{\label{5264a-fig}}
\end{subfigure}
~%
\begin{subfigure}[b]{0.15\linewidth}
\begin{tikzpicture}[yscale=.866, xscale=.5, scale=\sf]
\draw (0,0) node[5vert] (a) {$~$} -- (1,1) node (b) {} -- (2,0) node (c) {} -- (3,1) node (d) {} -- (4,0) node (e) {} --  (5,1) node (f) {} -- (6,0) node[5vert] (g) {$~$} (f)--(e) --(c) (b) -- (d) -- (f);

\begin{scope}[ultra thick,decoration={
    markings,
    mark=at position 0.7 with {\arrow{>}}}
    ] 
    \draw[postaction={decorate}] (d)--(b);
    \draw[postaction={decorate}] (b)--(a);
    \draw[postaction={decorate}] (d)--(c);
    \draw[postaction={decorate}] (d)--(e);
    \draw[postaction={decorate}] (d)--(f);
    \draw[postaction={decorate}] (f)--(g);
\end{scope}

\end{tikzpicture}
\subcaption{\label{5265a-fig}~}
\end{subfigure}
\begin{subfigure}[b]{0.19\linewidth}
\begin{tikzpicture}[yscale=.866, xscale=.5, scale=\sf]
\draw (0,0) node[5vert] (a) {$~$} -- (1,1) node (b) {} -- (2,0) node (c) {} -- (3,1) node (d) {} -- (4,0) node (e) {} --  (5,1) node (f) {} -- (6,0) node (g) {} -- (8,0) node[5vert] (i) {$~$} (f) -- (7,1) node[5vert] (h) {$~$} 
(f)--(e) --(c) -- (a) (b) -- (d) (e) -- (g);

\begin{scope}[ultra thick,decoration={
    markings,
    mark=at position 0.7 with {\arrow{>}}}
    ] 
    \draw[postaction={decorate}] (a)--(b);
    \draw[postaction={decorate}] (c)--(d);
    \draw[postaction={decorate}] (e)--(f);
    \draw[postaction={decorate}] (f)--(h);
    \draw[postaction={decorate}] (e)--(g);
    \draw[postaction={decorate}] (g)--(i);
\end{scope}
\end{tikzpicture}
\subcaption{\label{5366a-fig}~}
\end{subfigure}
\begin{subfigure}[b]{0.185\linewidth}
\begin{tikzpicture}[yscale=.866, xscale=.5, scale=\sf]
\draw (0,0) node[5vert] (a) {$~$} -- (1,1) node (b) {} -- (2,0) node (c) {} -- (3,1) node (d) {} -- (4,0) node (e) {} --  (5,1) node (f) {} -- (6,0) node (g) {} -- (8,0) node[5vert] (i) {$~$} (f) 
(f)--(e) --(c) -- (a) (b) -- (d) -- (f) (e) -- (g);

\begin{scope}[ultra thick,decoration={
    markings,
    mark=at position 0.7 with {\arrow{>}}}
    ] 
    \draw[postaction={decorate}] (a)--(b);
    \draw[postaction={decorate}] (c)--(d);
    \draw[postaction={decorate}] (e)--(f);
    \draw[postaction={decorate}] (e)--(g);
    \draw[postaction={decorate}] (g)--(i);
\end{scope}
\end{tikzpicture}
\subcaption{\label{5266a-fig}}
\end{subfigure}
\end{figure}
\begin{figure}[!t]
\ContinuedFloat
\begin{subfigure}[b]{0.095\linewidth}
\begin{tikzpicture}[yscale=.866, xscale=.5, scale=\sf]
\draw (0,0) node[5vert] (a) {$~$} -- (1,1) node (b) {} -- (2,0) node[5vert] (c) {$~$} -- (3,1) node (d) {} -- (4,0) node[5vert] (e) {$~$}
(b) -- (d);

\begin{scope}[ultra thick,decoration={
    markings,
    mark=at position 0.7 with {\arrow{>}}}
    ] 
    \draw[postaction={decorate}] (c)--(b);
    \draw[postaction={decorate}] (b)--(a);
    \draw[postaction={decorate}] (c)--(d);
    \draw[postaction={decorate}] (d)--(e);
\end{scope}
\end{tikzpicture}
\subcaption{\label{5362a-fig}}
\end{subfigure}
~~%
\begin{subfigure}[b]{0.125\linewidth}
\begin{tikzpicture}[yscale=.866, xscale=.5, scale=\sf]
\draw (0,0) node[5vert] (a) {$~$} -- (1,1) node (b) {} -- (2,0) node[5vert] (c) {$~$} -- (3,1) node (d) {} -- (4,0) node (e) {} --  (5,1) node[5vert] (f) {$~$} --(e) --(c) (b) -- (d);

\begin{scope}[ultra thick,decoration={
    markings,
    mark=at position 0.7 with {\arrow{>}}}
    ] 
    \draw[postaction={decorate}] (c)--(b);
    \draw[postaction={decorate}] (b)--(a);
    \draw[postaction={decorate}] (c)--(d);
    \draw[postaction={decorate}] (c)--(e);
    \draw[postaction={decorate}] (e)--(f);
\end{scope}
\end{tikzpicture}
\subcaption{\label{5363a-fig}}
\end{subfigure}
\begin{subfigure}[b]{0.17\linewidth}
\begin{tikzpicture}[yscale=.866, xscale=.5, scale=\sf]
\draw (0,0) node[5vert] (a) {$~$} -- (1,1) node (b) {} -- (2,0) node (c) {} -- (3,1) node (d) {} -- (4,0) node (e) {} --  (5,1) node (f) {} (6,0) node[5vert] (g) {$~$} (f) -- (7,1) node[5vert] (h) {$~$} 
(f)--(e) (c) -- (a) (b) -- (d) -- (f) (e) -- (g);

\begin{scope}[ultra thick,decoration={
    markings,
    mark=at position 0.7 with {\arrow{>}}}
    ] 
    \draw[postaction={decorate}] (a)--(b);
    \draw[postaction={decorate}] (a)--(c);
    \draw[postaction={decorate}] (d)--(e);
    \draw[postaction={decorate}] (e)--(g);
    \draw[postaction={decorate}] (d)--(f);
    \draw[postaction={decorate}] (f)--(h);
\end{scope}
\end{tikzpicture}
\subcaption{\label{5365a-fig}}
\end{subfigure}
%
\begin{subfigure}[b]{0.17\linewidth}
\begin{tikzpicture}[yscale=.866, xscale=.5, scale=\sf]
\draw (0,1) node[5vert] (a) {$~$}  -- (1,0) node (b) {} -- (2,1) node (c) {} -- (3,0) node (d) {} -- (4,1) node (e) {} -- (5,0) node (f) {} -- (6,1) node[5vert] (g) {$~$} (f) -- (7,0) node[5vert] (h) {$~$}
(b) -- (d) (c) -- (e) -- (g);

\begin{scope}[ultra thick,decoration={
    markings,
    mark=at position 0.7 with {\arrow{>}}}
    ] 
    \draw[postaction={decorate}] (e)--(c);
    \draw[postaction={decorate}] (e)--(d);
    \draw[postaction={decorate}] (d)--(b);
    \draw[postaction={decorate}] (b)--(a);
    \draw[postaction={decorate}] (g)--(f);
    \draw[postaction={decorate}] (f)--(h);
\end{scope}
\end{tikzpicture}
\subcaption{\label{5365b-fig}}
\end{subfigure}
\begin{subfigure}[b]{0.17\linewidth}
\begin{tikzpicture}[yscale=.866, xscale=.5, scale=\sf]
\draw (0,0) node[5vert] (a) {$~$} (1,1) node[5vert] (b) {$~$} -- (2,0) node (c) {} -- (3,1) node (d) {} -- (4,0) node (e) {} --  (5,1) node (f) {} (6,0) node[5vert] (g) {$~$} (f) -- (7,1) node[5vert] (h) {$~$} 
(f)--(e) (c) -- (a) (b) -- (d) -- (f) (e) -- (g);

\begin{scope}[ultra thick,decoration={
    markings,
    mark=at position 0.7 with {\arrow{>}}}
    ] 
    \draw[postaction={decorate}] (b)--(c);
    \draw[postaction={decorate}] (c)--(a);
    \draw[postaction={decorate}] (d)--(e);
    \draw[postaction={decorate}] (e)--(g);
    \draw[postaction={decorate}] (d)--(f);
    \draw[postaction={decorate}] (f)--(h);
\end{scope}
\end{tikzpicture}
\subcaption{\label{5464a-fig}}
\end{subfigure}
%
\begin{subfigure}[b]{0.185\linewidth}
\begin{tikzpicture}[yscale=.866, xscale=.5, scale=\sf]
\draw (-1,0) node[5vert] (aa) {$~$} -- (0,1) node (a) {} -- (1,0) node (b) {} -- (2,1) node (c) {} -- (3,0) node (d) {} -- (4,1) node (e) {} -- (5,0) node (f) {} -- (6,1) node[5vert] (g) {$~$} (f) -- (7,0) node[5vert] (h) {$~$}
(b) -- (d) (c) -- (e) -- (g);

\begin{scope}[ultra thick,decoration={
    markings,
    mark=at position 0.7 with {\arrow{>}}}
    ] 
    \draw[postaction={decorate}] (e)--(d);
    \draw[postaction={decorate}] (d)--(b);
    \draw[postaction={decorate}] (c)--(a);
    \draw[postaction={decorate}] (a)--(aa);
    \draw[postaction={decorate}] (g)--(f);
    \draw[postaction={decorate}] (f)--(h);
\end{scope}
\end{tikzpicture}
\subcaption{\label{5366b-fig}}
\end{subfigure}
\bigskip

\begin{subfigure}[b]{0.11\linewidth}
\begin{tikzpicture}[yscale=-.866, xscale=.5, scale=\sf]
\draw (-2,0) node (a) {} --++ (1,1) node (b) {} --++ (2,0) node (c) {} --++ (1,-1) node (d) {}
--++ (-2,-1) node (e) {} -- (a) (0,0) node[5vert] (h) {$~$};

\begin{scope}[ultra thick,decoration={
    markings,
    mark=at position 0.7 with {\arrow{>}}}
    ] 
    \draw[postaction={decorate}] (h)--(a);
    \draw[postaction={decorate}] (h)--(b);
    \draw[postaction={decorate}] (h)--(c);
    \draw[postaction={decorate}] (h)--(d);
    \draw[postaction={decorate}] (h)--(e);
\end{scope}
\end{tikzpicture}
\subcaption{\label{5165a-fig}}
\end{subfigure}
%
%
%
%
\begin{subfigure}[b]{0.13\linewidth}
\begin{tikzpicture}[yscale=.866, xscale=.5, scale=\sf]
\draw (1,-1) node[5vert] (b) {$~$}  -- (2,0) node (c) {} -- (3,-1) node (d) {} -- (4,0) node (e) {} --  (5,-1) node (f) {} (6,0) node (g) {} -- (5,1) node (h) {} -- (3,1) node[5vert] (i) {$~$} 
(f)--(e) -- (c) (b) -- (d) -- (f) -- (g) -- (e);

\begin{scope}[ultra thick,decoration={
    markings,
    mark=at position 0.7 with {\arrow{>}}}
    ] 
    \draw[postaction={decorate}] (e)--(c);
    \draw[postaction={decorate}] (e)--(f);
    \draw[postaction={decorate}] (e)--(g);
    \draw[postaction={decorate}] (e)--(h);
    \draw[postaction={decorate}] (h)--(i);
\end{scope}
\end{tikzpicture}
\subcaption{\label{5266b-fig}}
\end{subfigure}
\begin{subfigure}[b]{0.15\linewidth}
\begin{tikzpicture}[yscale=.866, xscale=.5, scale=\sf]
\draw (0,0) node[5vert] (a) {$~$} -- (1,-1) node (b) {} -- (2,0) node (c) {} -- (3,-1) node (d) {} -- (4,0) node (e) {} --  (5,-1) node (f) {} (6,0) node (g) {} -- (5,1) node (h) {} -- (3,1) node[5vert] (i) {$~$} 
(f)--(e) -- (c) -- (a) (b) -- (d) -- (f) -- (g) -- (e);

\begin{scope}[ultra thick,decoration={
    markings,
    mark=at position 0.7 with {\arrow{>}}}
    ] 
    \draw[postaction={decorate}] (a)--(b);
    \draw[postaction={decorate}] (e)--(h);
    \draw[postaction={decorate}] (e)--(f);
    \draw[postaction={decorate}] (e)--(g);
    \draw[postaction={decorate}] (h)--(i);
\end{scope}
\end{tikzpicture}
\subcaption{\label{5267a-fig}~}
\end{subfigure}
%
%
\begin{subfigure}[b]{0.15\linewidth}
\begin{tikzpicture}[yscale=.866, xscale=.5, scale=\sf]
\draw (2,0) node[5vert] (c) {$~$} -- (3,-1) node (d) {} -- (4,0) node (e) {} --  (5,-1) node (f) {} (6,0) node (g) {} -- (5,1) node (h) {} -- (7,1) node (i) {} -- (8,0) node (j) {} -- (7,-1) node[5vert] (k) {$~$} 
(g) -- (f)--(e) -- (c) (d) -- (f) (e) -- (g) -- (i);

\begin{scope}[ultra thick,decoration={
    markings,
    mark=at position 0.7 with {\arrow{>}}}
    ] 
    \draw[postaction={decorate}] (e)--(d);
    \draw[postaction={decorate}] (e)--(f);
    \draw[postaction={decorate}] (e)--(h);
    \draw[postaction={decorate}] (h)--(i);
    \draw[postaction={decorate}] (g)--(j);
    \draw[postaction={decorate}] (j)--(k);
\end{scope}
\end{tikzpicture}
\subcaption{\label{5267b-fig}}
\end{subfigure}
\begin{subfigure}[b]{0.15\linewidth}
\begin{tikzpicture}[yscale=.866, xscale=.5, scale=\sf]
\draw (0,0) node (a) {} -- (1,1) node[5vert] (b) {$~$} -- (2,0) node (c) {} -- (1,-1) node (d) {} -- (3,-1) node (e) {} -- (c) -- (4,0) node (f) {} -- (5,-1) node (g) {} -- (6,0) node (h) {} -- (5,1) node[5vert] (i) {$~$} (f)
(d) -- (a) -- (c) (h) -- (f) -- (e) -- (g);

\begin{scope}[ultra thick,decoration={
    markings,
    mark=at position 0.7 with {\arrow{>}}}
    ] 
    \draw[postaction={decorate}] (c)--(a);
    \draw[postaction={decorate}] (c)--(d);
    \draw[postaction={decorate}] (f)--(g);
    \draw[postaction={decorate}] (f)--(h);
    \draw[postaction={decorate}] (h)--(i);
\end{scope}
\end{tikzpicture}
\subcaption{\label{5267c-fig}}
\end{subfigure}
%
\begin{subfigure}[b]{0.17\linewidth}
\begin{tikzpicture}[yscale=.866, xscale=.5, scale=\sf]
\draw (0,0) node (a) {} -- (1,1) node[5vert] (b) {$~$} -- (2,0) node (c) {} -- (1,-1) node (d) {} -- (3,-1) node (e) {} -- (c) -- (4,0) node (f) {} -- (5,-1) node (g) {} -- (6,0) node (h) {} -- (5,1) node (i) {} -- (7,1) node[5vert] (j) {$~$} (f)
(d) -- (a) -- (c) (f) -- (e) -- (g) -- (h);

\begin{scope}[ultra thick,decoration={
    markings,
    mark=at position 0.7 with {\arrow{>}}}
    ] 
    \draw[postaction={decorate}] (c)--(a);
    \draw[postaction={decorate}] (c)--(d);
    \draw[postaction={decorate}] (f)--(h);
    \draw[postaction={decorate}] (f)--(g);
    \draw[postaction={decorate}] (f)--(i);
    \draw[postaction={decorate}] (i)--(j);
\end{scope}
\end{tikzpicture}
\subcaption{\label{5268b-fig}~~~~}
\end{subfigure}
\bigskip

%
\begin{subfigure}[b]{0.1925\linewidth}
\begin{tikzpicture}[yscale=.866, xscale=.5, scale=\sf]
\draw (0,0) node (a) {} -- (1,1) node[5vert] (b) {$~$} -- (2,0) node (c) {} -- (1,-1) node (d) {} -- (3,-1) node (e) {} -- (c) -- (4,0) node (f) {} -- (5,-1) node (g) {} -- (7,-1) node[5vert] (h) {$~$} (f) -- (6,0) node (i) {} -- (8,0) node[5vert] (j) {$~$} 
(d) -- (a) -- (c) (e) -- (f) (g) -- (i);

\begin{scope}[ultra thick,decoration={
    markings,
    mark=at position 0.7 with {\arrow{>}}}
    ] 
    \draw[postaction={decorate}] (c)--(a);
    \draw[postaction={decorate}] (c)--(d);
    \draw[postaction={decorate}] (c)--(e);
    \draw[postaction={decorate}] (f)--(g);
    \draw[postaction={decorate}] (g)--(h);
    \draw[postaction={decorate}] (f)--(i);
    \draw[postaction={decorate}] (i)--(j);
\end{scope}
\end{tikzpicture}
\subcaption{\label{5367a-fig}}
\end{subfigure}
%
%
%
%
\begin{subfigure}[b]{0.17\linewidth}
\begin{tikzpicture}[yscale=.866, xscale=.5, scale=\sf]
\draw (3,2) node[5vert] (aaa) {$~$}-- (1,2) node(aa) {} -- (0,1) node (a) {} -- (1,0) node (b) {} -- (2,1) node (c) {} -- (3,0) node (d) {} -- (4,1) node (e) {} -- (5,0) node (f) {} -- (6,1) node[5vert] (g) {$~$} (f) -- (7,0) node[5vert] (h) {$~$}
(b) -- (d) (c) -- (e) -- (g) (c) -- (aa);

\begin{scope}[ultra thick,decoration={
    markings,
    mark=at position 0.7 with {\arrow{>}}}
    ] 
    \draw[postaction={decorate}] (c)--(d);
    \draw[postaction={decorate}] (c)--(b);
    \draw[postaction={decorate}] (c)--(a);
    \draw[postaction={decorate}] (c)--(aa);
    \draw[postaction={decorate}] (aa)--(aaa);
    \draw[postaction={decorate}] (e)--(f);
    \draw[postaction={decorate}] (f)--(h);
\end{scope}
\end{tikzpicture}
\subcaption{\label{5367b-fig}}
\end{subfigure}
%
\begin{subfigure}[b]{0.15\linewidth}
\begin{tikzpicture}[yscale=.866, xscale=.5, scale=\sf]
\draw (3,1) node (aa) {} (0,0) node (a) {} -- (1,1) node (b) {} -- (2,0) node (c) {} -- (1,-1) node (d) {} -- (3,-1) node (e) {} -- (c) -- (4,0) node[7vert] (f) {} -- (5,-1) node (g) {} -- (6,0) node[5vert] (h) {$~$} 
(d) -- (a) -- (c) (h) -- (f) -- (e) -- (g) (b) -- (aa) -- (c) (aa) -- (f);

\begin{scope}[ultra thick,decoration={
    markings,
    mark=at position 0.7 with {\arrow{>}}}
    ] 
    \draw[postaction={decorate}] (c)--(a);
    \draw[postaction={decorate}] (c)--(aa);
    \draw[postaction={decorate}] (c)--(b);
    \draw[postaction={decorate}] (c)--(d);
    \draw[postaction={decorate}] (e)--(f);
    \draw[postaction={decorate}] (e)--(g);
    \draw[postaction={decorate}] (g)--(h);
\end{scope}
\end{tikzpicture}
\subcaption{\label{516771a-fig}}
\end{subfigure}
\begin{subfigure}[b]{0.17\linewidth}
\begin{tikzpicture}[yscale=.866, xscale=.5, scale=\sf]
\draw (0,0) node (a) {} -- (1,1) node[5vert] (b) {$~$} -- (2,0) node[7vert] (c) {$\,$} -- (1,-1) node (d) {} -- (3,-1) node (e) {} -- (c) -- (4,0) node (f) {} -- (5,-1) node (g) {} -- (7,-1) node[5vert] (h) {$~$} (f)
(a) -- (c) (f) -- (e) -- (g);

\begin{scope}[ultra thick,decoration={
    markings,
    mark=at position 0.7 with {\arrow{>}}}
    ] 
    \draw[postaction={decorate}] (b)--(a);
    \draw[postaction={decorate}] (b)--(c);
    \draw[postaction={decorate}] (a)--(d);
    \draw[postaction={decorate}] (e)--(f);
    \draw[postaction={decorate}] (e)--(g);
    \draw[postaction={decorate}] (g)--(h);
\end{scope}
\end{tikzpicture}
\subcaption{\label{526571a-fig}}
\end{subfigure}
%
%
%
\begin{subfigure}[b]{0.17\linewidth}
\begin{tikzpicture}[yscale=.866, xscale=.5, scale=\sf]
\draw (0,0) node (a) {} -- (1,1) node (b) {} -- (3,1) node[5vert] (aa) {$~$} (b) -- (2,0) node[7vert] (c) {$\,$} -- (1,-1) node[5vert] (d) {$~$} -- (3,-1) node (e) {} -- (c) -- (4,0) node (f) {} -- (5,-1) node (g) {} -- (7,-1) node[5vert] (h) {$~$} (f)
(a) -- (c) (f) -- (e) -- (g);

\begin{scope}[ultra thick,decoration={
    markings,
    mark=at position 0.7 with {\arrow{>}}}
    ] 
    \draw[postaction={decorate}] (d)--(a);
    \draw[postaction={decorate}] (a)--(b);
    \draw[postaction={decorate}] (b)--(aa);
    \draw[postaction={decorate}] (d)--(c);
    \draw[postaction={decorate}] (e)--(f);
    \draw[postaction={decorate}] (e)--(g);
    \draw[postaction={decorate}] (g)--(h);
\end{scope}
\end{tikzpicture}
\subcaption{\label{536571a-fig}}
\end{subfigure}
\end{figure}
\begin{figure}[!ht]
\ContinuedFloat

\begin{subfigure}[b]{0.12\linewidth}
\begin{tikzpicture}[yscale=.866, xscale=.5, scale=\sf]
\draw (0,0) node (a) {} (-2,0) node (b) {} -- (-1,1) node (c) {} -- (1,1) node (d) {} -- (2,0) node (e) {} -- (1,-1) node (f) {}
-- (-1,-1) node (g) {} -- (b);

\begin{scope}[ultra thick,decoration={
    markings,
    mark=at position 0.7 with {\arrow{>}}}
    ] 
    \draw[postaction={decorate}] (a)--(b);
    \draw[postaction={decorate}] (a)--(c);
    \draw[postaction={decorate}] (a)--(d);
    \draw[postaction={decorate}] (a)--(e);
    \draw[postaction={decorate}] (a)--(f);
    \draw[postaction={decorate}] (a)--(g);
\end{scope}
\end{tikzpicture}
\subcaption{\label{67a-fig}~~}
\end{subfigure}
\begin{subfigure}[b]{0.12\linewidth}
\begin{tikzpicture}[yscale=.866, xscale=.5, scale=\sf]
\draw (-2,0) node (a) {} --++ (1,1) node (b) {} --++ (2,0) node (c) {} --++ (1,-1) node (d) {}
--++ (-.6,-1) node (e) {} --++ (-1.4,0) node (f) {} --++ (-1.4,0) node (g) {} -- (a) (0,0) node[7vertB] (h) {};

\begin{scope}[ultra thick,decoration={
    markings,
    mark=at position 0.7 with {\arrow{>}}}
    ] 
    \draw[postaction={decorate}] (h)--(a);
    \draw[postaction={decorate}] (h)--(b);
    \draw[postaction={decorate}] (h)--(c);
    \draw[postaction={decorate}] (h)--(d);
    \draw[postaction={decorate}] (h)--(e);
    \draw[postaction={decorate}] (h)--(f);
    \draw[postaction={decorate}] (h)--(g);
\end{scope}
\end{tikzpicture}
\subcaption{\label{6771a-fig}~~~}
\end{subfigure}
%
%
\begin{subfigure}[b]{0.12\linewidth}
\begin{tikzpicture}[yscale=.866, xscale=.5, scale=\sf]
\draw (0,0) node (b) {} -- (1,-1) node (c) {} -- (3,-1) node (d) {} -- 
(2,0) node[7vert] (e) {$\,$} -- (1,1) node[5vert] (f) {$~$} (d) -- (4,0) node[5vert] (h) {$~$} -- (e) -- (c) (b) -- (e);

\begin{scope}[ultra thick,decoration={
    markings,
    mark=at position 0.7 with {\arrow{>}}}
    ] 
    \draw[postaction={decorate}] (f)--(b);
    \draw[postaction={decorate}] (f)--(e);
    \draw[postaction={decorate}] (h)--(d);
    \draw[postaction={decorate}] (d)--(c);
\end{scope}
\end{tikzpicture}
\subcaption{\label{526371a-fig}~~~}
\end{subfigure}
%
\begin{subfigure}[b]{0.14\linewidth}
\begin{tikzpicture}[yscale=.866, xscale=.5, scale=\sf]
\draw (-1,-1) node[5vert] (a) {$~$} -- (0,0) node (b) {} -- (1,-1) node (c) {} -- (3,-1) node[5vert] (d) {$~$} -- 
(2,0) node[7vert] (e) {$\,$} -- (1,1) node (f) {} -- (3,1) node (g) {} -- (4,0) node[5vert] (h) {$~$} (a) -- (c) -- (e) --(g);

\begin{scope}[ultra thick,decoration={
    markings,
    mark=at position 0.7 with {\arrow{>}}}
    ] 
    \draw[postaction={decorate}] (b)--(f);
    \draw[postaction={decorate}] (b)--(e);
    \draw[postaction={decorate}] (f)--(g);
    \draw[postaction={decorate}] (g)--(h);
\end{scope}
\end{tikzpicture}
\subcaption{\label{536471a-fig}}
\end{subfigure}
\begin{subfigure}[b]{0.2\linewidth}
\begin{tikzpicture}[yscale=.866, xscale=.5, scale=\sf]
\draw (-1,-1) node (a) {} -- (0,0) node[7vert] (b) {$\,$} -- (1,-1) node (c) {} -- (3,-1) node (d) {} -- 
(2,0) node (e) {} -- (1,1) node (f) {} -- (3,1) node[5vert]  (g) {$~$} (d) -- (5,-1) node[5vert] (i) {$~$} (a) -- (c) -- (e);
\draw (-3,-1) node[5vert] (aaa) {$~$} -- (-2,0) node (aa) {} -- (a) (aa) -- (b) -- (f);

\begin{scope}[ultra thick,decoration={
    markings,
    mark=at position 0.7 with {\arrow{>}}}
    ] 
    \draw[postaction={decorate}] (e)--(f);
    \draw[postaction={decorate}] (f)--(g);
    \draw[postaction={decorate}] (e)--(d);
    \draw[postaction={decorate}] (d)--(i);
    \draw[postaction={decorate}] (e)--(b);
    \draw[postaction={decorate}] (c)--(a);
    \draw[postaction={decorate}] (a)--(aa);
    \draw[postaction={decorate}] (aa)--(aaa);
\end{scope}
\end{tikzpicture}
\subcaption{\label{546571a-fig}~~}
\end{subfigure}
\begin{subfigure}[b]{0.16\linewidth}
\begin{tikzpicture}[yscale=.866, xscale=.5, scale=\sf]
\draw (-1,-1) node (a) {} -- (0,0) node[7vert] (b) {$\,$} -- (1,-1) node (c) {} -- (3,-1) node (d) {} -- 
(2,0) node (e) {} -- (1,1) node (f) {} -- (3,1) node (g) {} -- (4,0) node[5vert] (h) {$~$} (d) -- (5,-1) node[5vert] (i) {$~$} (a) -- (c) -- (e) --(g);
\draw (-3,-1) node[5vert] (aaa) {$~$} -- (-2,0) node (aa) {} -- (a) (aa) -- (b) -- (f);

\begin{scope}[ultra thick,decoration={
    markings,
    mark=at position 0.7 with {\arrow{>}}}
    ] 
    \draw[postaction={decorate}] (e)--(f);
    \draw[postaction={decorate}] (e)--(g);
    \draw[postaction={decorate}] (g)--(h);
    \draw[postaction={decorate}] (e)--(d);
    \draw[postaction={decorate}] (d)--(i);
    \draw[postaction={decorate}] (e)--(b);
    \draw[postaction={decorate}] (c)--(a);
    \draw[postaction={decorate}] (a)--(aa);
    \draw[postaction={decorate}] (aa)--(aaa);
\end{scope}
\end{tikzpicture}
\subcaption{\label{546671b-fig}}
\end{subfigure}
\caption{\subref{546671b-fig} configurations shown reducible by Lemmas~\ref{first-reduc-lem}, \ref{second-reduc-lem}, and~\ref{main-reduc-lem}\label{main-reduc-fig}}
\end{figure}

A key advantage of the approach we have taken above is that we can perform the calculations for distinct out-trees independently.  
What is more, we can reuse these same calculations to prove the reducibility of many configurations. 
It is easy to check that if a calculation works for an out-tree $\vec{T}$, then it also works for any out-tree $\vec{T'}$ that 
results by deleting a leaf from $\vec{T}$.
Similarly, we can replace any $r_s$-vertex in an out-tree with either an $r_{s+1}$-vertex or an $(r-1)_s$-vertex; both types 
of modification reduce the number of constraints on our recoloring sequence, so the calculations for the original out-tree
also certify the new out-tree.
It is also helpful to note that we can replace any $5_2$-vertex root of an out-tree with a $6_4$-vertex root (the computation
allowing this replacement essentially rests on the fact that $(6-4)/(10-6-1) = 2/3 < 3/4 = (5-2)/(10-5-1)$).
Furthermore, we can replace any $5_1$-vertex leaf in an out-tree with a $6_3$-vertex leaf, and we can replace any $6_3$-vertex leaf
with a $7_5$-vertex leaf (the validity of these replacements use the fact that $1/(3\times 2) > 1/(4\times 3) > 1/(5\times 4)$).
Finally, if a $6_3$-vertex supports only a single leaf $5_1$-vertex, then the $(6_3(5_1))$ can be replaced by a single $5_1$.
We call any tree resulting from a sequence of zero or more of any combination of these operations a \Emph{pruning} of $\vec{T}$;
we denote the set of prunings of $\Jc$ by \Emph{$\prune(\Jc)$}. 
(We show in Lemma~\ref{edge-remove-lem} that we can also remove an edge from any configuration in Figure~\ref{main-reduc-fig}, 
decreasing the degrees of its endpoints, and get another reducible configuration; further, we can iterate this operation.)
In this paper, all of our out-trees have radius at most 3, although in principle the same technique could be applied to out-trees
of arbitrary (but bounded) radius.

We note that our reducibility proof does not require that these configurations appear as induced subgraphs.  The argument works
equally well for subgraphs that are not induced.  The proofs also work when vertices at pairwise distance 3 or more are identified,
as long as all but one of the identified vertices is a leaf in its out-tree.  For example, if a $5_2$- and a $5_1$-vertex are 
identified, the result is a $5_3$-vertex.  So in the collection of out-trees witnessing the configuration's reducibility, we can 
simply prune the $5_1$-vertex from its out-tree.  (Note that each $5_1$-vertex is necessarily a leaf in its out-tree.)
One other case of possible vertex identification is worth noting.  Suppose that a configuration has two $6_3$-vertices, at least
one of which supports only a $5_1$-vertex.  If these vertices are identified, then we simply delete the $5_1$-vertex from 
the resulting configuration.  The identified vertex is now a $6_5$-vertex, and the $6_3$-vertex that was supporting the (now deleted)
$5_1$-vertex can be deleted from its out-tree, along with the deleted $5_1$-vertex.
\end{proof}

\subsection{One More Wrinkle}

The technique we introduced to prove Lemma~\ref{main-reduc-lem} is powerful.  However, to handle a few more reducible 
configurations, we will need to extend it.  Rather than stating this extension in a general form, we only state the
particular special cases that we need for our 4 remaining reducible configurations.  But the reader should find that 
generalizing it (for other problems) is straightforward.

\begin{lem}
If $G$ is a $k$-minimal counterexample, with $k\ge 520$, then $G$ has none of the configurations in Figure~\ref{more-reduc-fig}.
\label{more-reduc-lem}
\end{lem}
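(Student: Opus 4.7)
The plan is to follow the same template as the proof of Lemma~\ref{main-reduc-lem}: for each configuration $H$ in Figure~\ref{more-reduc-fig}, let $G':=G-V(H)$, invoke $k$-minimality to obtain a $k$-good recoloring sequence $\sigma'$ for $G'$, and then extend $\sigma'$ to a recoloring sequence for $G$ via a carefully designed deferral scheme among the vertices of $H$. Once we have matched $\beta$ off of $H$, Proposition~\ref{finishing-prop} will complete the recoloring of $H$, contributing at most two extra recolorings per vertex.

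The reason these four configurations need an extension of the technique is that they cannot be covered by out-trees in $\prune(\Jc)$: at least one vertex of $H$ fails the strict form of~\eqref{reduc-ineq} with respect to any rooted orientation, so no single well in $\Jc$ can carry it. My plan is to enrich the deferral scheme in one of two ways. First, I will allow a single well to ``double up,'' supporting two distinct deferral chains rooted at it (the well is recolored at a rate strictly below $k$, so we have slack to absorb an additional $24/4$ or $120/4$ contribution if the two supported subtrees use disjoint blocks of deferral indices). Second, I will allow an internal non-well vertex to defer simultaneously toward two different descendants, trading slack at that vertex for slack at an ancestor; this preserves the compositional structure of the calculations in Lemma~\ref{main-reduc-lem} while breaking the rigidity of the out-tree covering requirement. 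In either case, each individual vertex calculation still has the form $\lceil (rk - m)/r\rceil + m/(r+1) + (\text{contributions from children}) + 2 \le k$, exactly as in the four computations following Figure~\ref{outtree-fig}.

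For each configuration, I will (i) identify the well(s) and the ``problem'' vertex (or vertices), (ii) describe the deferral arrows explicitly, either in text or as a directed graph overlaid on the configuration, and (iii) verify the per-vertex inequalities using the same bookkeeping as in Lemma~\ref{main-reduc-lem}, choosing the block sizes (analogues of $24$, $40$, $120$, $384$) to balance the calculation at the well. I expect that the same threshold $k\ge 416$ will suffice, since the arithmetic is dominated by the worst case $(384/4)+(120/4)+\cdots+2$ already appearing in out-tree (2).

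The main obstacle will be configurations in which two or more ``problem'' vertices are forced to share a single well while being mutually adjacent, so that their deferral intervals cannot be chosen independently. Here I will have to stagger the two deferral phases in time (performing the second well-supported deferral only after the first has completed), and argue that the resulting bound on the well's recoloring count still fits in $k$; this is where I expect to need the generalization most, and where I anticipate having to check that no vertex outside $H$ ever sees two vertices of $H$ simultaneously carrying colors it is about to acquire. As at the end of Lemma~\ref{main-reduc-lem}, I will also need to address the edge case where distant vertices of the configuration are identified in $G$, noting that such identifications can only reduce the number of constraints and hence preserve the calculation.
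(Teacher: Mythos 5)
Your plan fixes $G' := G - V(H)$ once and extends $\sigma'$ in a single pass using a richer deferral scheme, and this cannot be made to work for (RC\subref{5261a-fig}) no matter how the deferrals are chosen or staggered. The $6$-vertex $w$ violates~\eqref{reduc-ineq} outright, since $d_H(w) = 2 < 3 = 2\cdot 6 - 9$ (your diagnosis that only the \emph{strict} form fails is incorrect), and a budget count shows the arithmetic cannot close. Partition $w$'s $4k$ external recoloring events into windows with both, one, or neither of $v_1,v_2$ deferring, giving $n_5,n_4,n_3$ windows of sizes $5,4,3$ with $5n_5+4n_4+3n_3 = 4k$; requiring $w$'s count $n_5+n_4+n_3+2 \le k$ forces $2n_5 + n_4 \ge k+6$. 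But each deferred window costs $v_1$ or $v_2$ one full recoloring (no block compression applies to a deferral request issued by a vertex being extended alongside you), so $e_1+e_2 \ge n_5+n_4$ in the worst case, while the $v_i$-budgets give $e_1+e_2 \le 2\bigl(\lfloor k/4\rfloor - 2\bigr) \le k/2 - 4$. Together with $n_5+n_4 \ge n_5 \ge (k+6-n_4)/2$, these constraints are inconsistent: one of $v_1,v_2$ overshoots $k$ by a constant regardless of $k$. Having a well ``double up,'' adding bidirectional $v_1\leftrightarrow v_2$ arrows, or overlaying a DAG in place of a forest only tightens the budget further, so no enrichment of the one-pass deferral scheme rescues the calculation.

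What the paper actually does is a \emph{layered} extension that your outline does not anticipate. For (RC\subref{5261a-fig}), it first deletes $\{v_1,v_2\}$ and then $w$, obtains $\sigma''$ on $G - \{v_1,v_2,w\}$, and extends to $w$ alone by the Extension Lemma: since $v_1,v_2$ are absent, $w$ has degree $4$ at that stage, and is recolored at most $\Ceil{4k/5}+1$ times. Only then are $v_1,v_2$ added simultaneously (neither deferring); now $w$'s recolorings are part of a fixed planned sequence, so $v_1$ absorbs them with full block compression: $\Ceil{(3k + \Ceil{4k/5} + 1)/4} + 2 \le k$. The essential distinction your plan misses is that reacting to a neighbor already in $\sigma'$ costs $1/(\text{block size})$ per event of that neighbor, whereas satisfying a deferral request from a vertex being extended at the same time costs $1$ per event; staging converts the expensive kind into the cheap kind for precisely the vertices that violate~\eqref{reduc-ineq}. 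The other configurations in Figure~\ref{more-reduc-fig} are handled by the same peeling-from-the-inside strategy, e.g.\ for (RC\subref{526273a-fig}): add $x$, then $\{w_1,w_2\}$, then $\{w_3,w_4\}$, then $\{v_1,v_2\}$, with out-tree-style deferral used only within a single layer. Reorganize your argument around this staging; the deferral enhancements you propose are not a substitute for it.
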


\begin{proof}
(\subref{5261a-fig}) Suppose the contrary.  Let $G$, $L$, $\a$, $\b$ be a counterexample, 
let $G':=G-\{v_1,v_2\}$, and let $G'':=G'-\{w\}$.  Let $\a'$ and $\b'$ denote the restrictions of $\a$ and $\b$ to $G'$, 
let $\a''$ and $\b''$ denote the restrictions of $\a$ and $\b$ to $G''$, and let $\sigma''$ denote a $k$-good reconfiguration
sequence from $\a''$ to $\b''$.  By the Extension Lemma, we can extend $\sigma''$ to a $k$-good reconfiguration 
sequence $\sigma'$ in $G'$ from $\a'$ to $\b'$.  Furthermore, $w$ is recolored at most $\ceil{4k/5}+1$ times. 

Now we extend $\sigma'$ to a $k$-good recoloring sequence from $\a$ to $\b$.  Neither $v_1$ nor $v_2$ defers to the other.
So the number of recolorings in $N(v_1)$ that might cause us to recolor $v_1$ is at most $3k+\ceil{4k/5}+1$.  Since 
$|L(v_1)|-d_G(v_1)-1=10-5-1=4$, the total number of recolorings of $v_1$ is at most $\ceil{(3k+\ceil{4k/5}+1)/4}+2\le k$.
The same argument works for $v_2$.

(\subref{5481a-fig}) The argument is nearly identical to that for (1): First we extend to $w$, then to $v_1$ and $v_2$, and 
finally to $v_3$ and $v_4$, with the same analysis as for $v_1$ and $v_2$.

For each of (\subref{526271a-fig})--(\subref{526273a-fig}), we use the same approach.  
It is similar to what we did in (1), but also reuses aspects of our approach in
the proof of Lemma~\ref{main-reduc-lem}. 

(\subref{526271a-fig}) Let $G':=G-\{v_1,v_2\}$ and $G'':=G'-\{w_1,w_2,w_3\}$.  Define $\a',\b',\a'',\b''$ analogously to above.  
By minimality, we have a $k$-good recoloring sequence $\sigma''$ for $G''$ from $\a''$ to $\b''$.  
We extend $\sigma''$ simultaneously to $\{w_1,w_2,w_3\}$.  Most of the time, none of $w_1,w_2,w_3$ defers to each other.
But for the first 40 colors that appear in $N_G(w_1)\setminus \{w_2\}$, we have vertex $w_2$ defer to $w_1$. 
So the number of times $w_1$ is recolored is at most $(4k-40)/4+40/5+2 = k$.  Similarly, $w_2$ defers to $w_3$ for the first 
40 colors that appear in $N_G(w_3)\setminus\{w_2\}$.  (So the analysis for $w_3$ is identical to that for $w_1$.)
Thus, the number of times that $w_2$ is recolored is at most $\ceil{3k/4}+40/5+40/5+2\le 3k/4+19\le k-8$, since $k\ge 108$.
Finally, we extend $\sigma'$ to $v_1$ and $v_2$, with neither defering to the other.
So the number of times $v_1$ is recolored is at most $(4k-8)/4+2 = k$, as desired.  
For $v_2$, the argument is identical.

\begin{figure}[!h]
\ContinuedFloat
\centering
\setcounter{subfigure}{30}
\begin{subfigure}[b]{0.10\linewidth}
\centering
\begin{tikzpicture}[scale=0.8, yscale=.866, xscale=.5]
\tikzset{every node/.style=6vertB}
\begin{scope}[yscale=-1]
\draw 
(0,0) node[5vertB, inner sep=.5pt] (v1) {\footnotesize{$v_1$}}
--++ (1,1) node[6vertB] (w) {\footnotesize{$w$}}
--++ (1,-1) node[5vertB, inner sep=.5pt] (v2) {\footnotesize{$v_2$}} -- (v1);
\end{scope}
\end{tikzpicture}
\subcaption{\label{5261a-fig}}
\end{subfigure}
\begin{subfigure}[b]{.17\linewidth}
\begin{tikzpicture}[scale=1.0, yscale=.866, xscale=.5]
\begin{scope}
\draw 
(0,0) node[5vertB, inner sep=.5pt] (v1) {\footnotesize{$v_2$}}
++ (2,0) node[5vertB, inner sep=.5pt] (v2) {\footnotesize{$v_3$}}
--++ (1,-1) node[5vertB, inner sep=.5pt] (w3) {\footnotesize{$v_4$}} -- (v2)
--++ (-1,-1) node[8vertB] (w4) {\footnotesize{$w$}} -- (v1)
--++ (-1,-1) node[5vertB, inner sep=.5pt] (w5) {\footnotesize{$v_1$}};
\draw (w3) -- (w4) -- (w5);
\end{scope}
\end{tikzpicture}
\subcaption{\label{5481a-fig}~~~}
\end{subfigure}
\begin{subfigure}[b]{.17\linewidth}
\begin{tikzpicture}[scale=1.0, yscale=.866, xscale=.5]
\begin{scope}
\draw 
(0,0) node[5vertB, inner sep=.5pt] (v1) {\footnotesize{$v_1$}}
--++ (2,0) node[5vertB, inner sep=.5pt] (v2) {\footnotesize{$v_2$}}
--++ (1,-1) node[6vertB] (w3) {\footnotesize{$w_3$}} -- (v2)
--++ (-1,-1) node[7vertB, inner sep=0pt] (w4) {\footnotesize{$w_2$}} -- (v1)
--++ (-1,-1) node[6vertB] (w5) {\footnotesize{$w_1$}};
\draw (w3) -- (w4) -- (w5);
\end{scope}
\end{tikzpicture}
\subcaption{\label{526271a-fig}~~}
\end{subfigure}
\begin{subfigure}[b]{.17\linewidth}
\begin{tikzpicture}[scale=1.0, yscale=.866, xscale=.5]
\begin{scope}
\draw 
(0,0) node[5vertB, inner sep=.5pt] (v1) {\footnotesize{$v_1$}}
--++ (2,0) node[5vertB, inner sep=.5pt] (v2) {\footnotesize{$v_2$}}
  ++ (1,-1) node[5vertB, inner sep=.5pt] (w3) {\footnotesize{$v_4$}} (v2)
--++ (-1,-1) node[7vertB] (w4) {\footnotesize{$w$}} -- (v1)
  ++ (-1,-1) node[5vertB, inner sep=.5pt] (w5) {\footnotesize{$v_3$}};
\draw (w3) -- (w4) -- (w5);
\end{scope}
\end{tikzpicture}
\subcaption{\label{5471a-fig}~~~}
\end{subfigure}
\begin{subfigure}[b]{.17\linewidth}
\begin{tikzpicture}[scale=1.0, yscale=.866, xscale=.5]
\begin{scope}
\draw 
(0,0) node[5vertB, inner sep=.5pt] (v1) {\footnotesize{$v_1$}}
--++ (2,0) node[5vertB, inner sep=.5pt] (v2) {\footnotesize{$v_2$}}
--++ (1,1) node (w2) {\footnotesize{$w_3$}} 
--++ (0,-2) node (w3) {\footnotesize{$w_2$}} -- (v2)
--++ (-1,-1) node[7vertB, inner sep=0pt] (w4) {\footnotesize{$w_1$}} -- (v1) -- (v2);
\draw (w3) -- (w4);
\end{scope}
\end{tikzpicture}
\subcaption{\label{526271b-fig}~~~~~~~~~~~~~}
\end{subfigure}
\begin{subfigure}[b]{.17\linewidth}
\begin{tikzpicture}[scale=1.0, yscale=.866, xscale=.5]
\begin{scope}
\draw 
(0,0) node[5vertB, inner sep=.5pt] (v1) {\footnotesize{$v_1$}}
--++ (1,1) node[7vertB, inner sep=0pt] (w1) {\footnotesize{$w_4$}}
--++ (1,-1) node[5vertB, inner sep=.5pt] (v2) {\footnotesize{$v_2$}}
--++ (1,1) node (w2) {\footnotesize{$w_3$}} 
--++ (0,-2) node[7vertB, inner sep=0pt] (w3) {\tiny{~}\footnotesize{$x$}\tiny{~~}} -- (v2)
--++ (-1,-1) node[7vertB, inner sep=0pt] (w4) {\footnotesize{$w_2$}} -- (v1)
--++ (-1,-1) node (w5) {\footnotesize{$w_1$}};
\draw (w1) -- (w2) (w3) -- (w4) -- (w5) (v1) -- (v2);
\end{scope}
\end{tikzpicture}
\subcaption{\label{526273a-fig}~~}
\end{subfigure}
\stepcounter{figure}
\caption{6 more reducible configurations\label{more-reduc-fig}}
\end{figure}
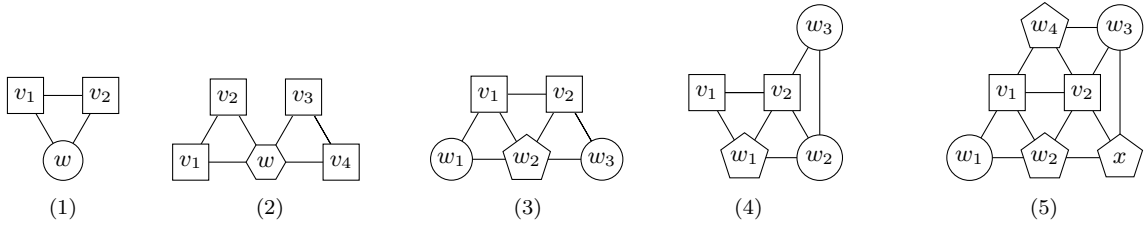
(\subref{5471a-fig}) The proof is nearly identical to that of (\subref{526271a-fig}), but with $(v_3,v_4,w)$ in place of 
$(w_1,w_3,w_2)$.  What were $6$-vertices previously are now $5$-vertices, so they can each afford to have one less neighbor 
in the configuration.

(\subref{526271b-fig}) We just provide a sketch. We get a $k$-good recoloring sequence for $G-\{v_1,v_2,w_1,w_2,w_3\}$.
We extend it simultaneously to $w_1,w_2,w_3$.  We have $w_2$ defer to $w_1$ for the first $200$ colors that appear in 
$N_G(w_1)\setminus \{w_2\}$.  
So the number of times that $w_1$ is recolored is at most $(4k-200)/4+200/5+2 = k-50+40+2 = k-8$.
Similarly, $w_2$ defers to $w_3$ for the first $40$ colors that appear in $N_G(w_3)\setminus\{w_2\}$.
So the number of times that $w_3$ is recolored is at most $(4k-40)/4+40/5+2 = k-10+8+2 = k$.
And the number of times that $w_2$ is recolored is at most $\ceil{3k/4}+200/5+40/5+2 \le \ceil{3k/4} + 50\le k$, since $k\ge 200$.
Finally, we extend this recoloring sequence to $v_1$ and $v_2$, with neither defering to the other.
Now the number of times that $v_1$ is recolored is at most $(4k-8)/4+2=k$, as desired.
The analysis for $v_2$ is identical.

(\subref{526273a-fig}) Again, we just provide a sketch. We first get a $k$-good recoloring sequence for $G-\{v_1,v_2,w_1,w_2,w_3,w_4,x\}$.
By the Extension Lemma, we extend it to $x$.  So the number of times $x$ gets recolored is at most $\ceil{4k/5}+2\le k-72$, since $k\ge 5(74)=370$.  Now we simultaneously extend to $w_1$ and $w_2$, except that $w_2$ defers to $w_1$ for the first $40$ colors 
appearing in $N_G(w_1)\setminus\{w_2\}$.  So the number of times $w_2$ is recolored is at most $(4k-72)/4 + 40/5 + 2 =k-8$.
And the number of times $w_1$ is recolored is at most $(4k-40)/4+40/5+2 = k$.  When we simultaneously extend to $w_3$ and $w_4$,
the analysis is identical, so we omit it.  Finally, we extend to $v_1$ and $v_2$.  The number of times $v_1$
is recolored is at most $(4k-8)/4+2 = k$.  And the analysis for $v_2$ is identical.
\end{proof}

\begin{lem}
\label{edge-remove-lem}
Let $H$ be any reducible configuration from Lemmas~\ref{first-reduc-lem}, \ref{second-reduc-lem}, \ref{main-reduc-lem}, and 
\ref{more-reduc-lem}, shown in Figures~\ref{main-reduc-fig} and~\ref{more-reduc-fig}.
If we delete one or more edges of $H$, and decrease the degree of each endpoint for each deleted incident edge, then the result
also contains a reducible configuration.
\end{lem}

\begin{proof}
We start with Lemma~\ref{more-reduc-lem}.  In (RC\subref{5261a-fig}), (RC\subref{5481a-fig}), and (RC\subref{5471a-fig}), 
if we delete any edge, then we get a $4^-$-vertex, which is reducible by the Extension Lemma.
In (RC\subref{526271a-fig}), (RC\subref{526271b-fig}), and (RC\subref{526273a-fig}), this is also true for many edges; 
but for the others, we get a copy of either (RC\subref{53a-fig}) or (RC\subref{5261a-fig}).

Now we consider Lemma~\ref{second-reduc-lem}.  Each time we delete an edge $e$, each endpoint of $e$ other than $v$ becomes a 
$4^-$-vertex, and we are done, or it becomes a $5_2$-vertex (as in the proof of Lemma~\ref{main-reduc-lem}).  In the latter case it 
is a well and can support itself; all other vertices continue to be supported by $v$, as in the proof of Lemma~\ref{second-reduc-lem}.

Finally, we consider Lemma~\ref{main-reduc-lem} (and Lemma~\ref{first-reduc-lem}).  All vertices are $5_{1^+}$-, 
$6_{3^+}$-, or $7_{5^+}$-vertices.  If we delete an edge incident to a $5$-vertex, it becomes a $4^-$-vertex, so we are done.  
But if we delete an edge incident to a $6_{3^+}$-vertex, it becomes a $5_{2^+}$-vertex; and a $7_{5^+}$-vertex becomes a 
$6_{4^+}$-vertex.  Thus, each endpoint of a deleted edge $e$ becomes a well.  As such, it can be the root of the component of 
$T-e$ containing it, where $T$ was the tree in $\prune(\Jc)$ previously containing it.
\end{proof}

\begin{lem}
All of the configurations in Lemmas~\ref{first-reduc-lem}--\ref{more-reduc-lem} are $k$-reducible for all $k\ge 520$.
\end{lem}
\begin{proof}
As we mentioned above, the particular value of $k$ is not all that important.  However, with a bit more work, we can improve the
hypothesis $k\ge 828$ to $k\ge 520$.  That is what we do now.

First note that Lemmas~\ref{second-reduc-lem} and~\ref{more-reduc-lem} already work for $k\ge 520$.  And Lemma~\ref{first-reduc-lem}
is a special case of Lemma~\ref{main-reduc-lem}.  So we only need to improve Lemma~\ref{main-reduc-lem}.  Of the 4 out-trees in
its proof, all but out-tree (2) work for $k\ge 520$.  So in what follows, we consider out-tree (2).

When we handled (2) in the proof of Lemma~\ref{main-reduc-lem}, we were concerned about having recolored $x$ to avoid colors
$c_{i}$, $c_{i+1}$, and $c_{i+2}$, with $w$ defering to $x$, but then needing to recolor $x$ (because $x$ defers to $y$) before
color $c_{i}$ appears on $N_G(x)\setminus N_H(x)$.  At this point, $x$ will only be able to avoid colors $c_{i}$ and $c_{i+1}$,
and will ``lose ground'' of one color in this sequence of colors.

Now we are more selective about when we make $w$ defer to $x$, so that $x$ can avoid colors $c_{i}$, $c_{i+1}$, $c_{i+2}$.  
For this to happen, we require that $w$ has defered to $x$ fewer than
$30$ times (and that $x$ needs to be recolored because its color is about to appear on $N_G(x)\setminus N_H(x)$).
But now we also require that it is not the case that both (a) the next color $d_j$ appears on $N_G(y)\setminus N_H(y)$ before the 
next color $c_i$ appears on $N_G(x)\setminus N_H(x)$ and also (b) $j\le 40$.  So there are at most $40$ times that $w$ does not
defer to $x$ before $w$ defers $30$ times.
In this way, $x$ will never lose ground, since either $j>40$ and $x$ no longer must defer to $y$, or else color $c_i$ will appear
on $N_G(x)\setminus N_H(x)$ before $x$ defers to $y$, so $x$ will only need to avoid $c_{i+1}$ and $c_{i+2}$, when we recolor $x$
to defer to $y$.

We also do something similar for $v$ defering to $w$.  For this to happen, we require that $v$ has defered to $w$ fewer than
$96$ times (and that $w$ needs to be recolored because its color is about to appear on $N_G(w)\setminus N_H(w)$).
But now we also require that it is not the case that both (a) the next color $c_j$ appears on $N_G(x)\setminus N_H(x)$ before the 
next color $b_i$ appears on $N_G(w)\setminus N_H(w)$ and also (b) $j\le 160$.  So there are at most $160$ times that $v$ does not
defer to $w$ before $v$ defers $96$ times.

Now the computations for out-tree (2) are as follows.  The number of times that $y$ is recolored is $(4k-40)/4+40/5+2=k$.
The number of times that $x$ is recolored is at most $(3k-120)/3+120/4+40/5+2=k$.  The number of times that $w$ is recolored
is at most $(3k-384)/3+384/4+120/4+2=k$.  The number of times that $z$ is recolored is at most $(3k-24)/3+24/4+2\le k$.
Finally, the number of times that $v$ is recolored is at most $\lceil(3k+96+6)/4\rceil+96+6+2\le k$, since $k\ge 520$.
\end{proof}

\section{Discharging}
\label{unavoidability-sec}
\label{discharging-sec}
In this section, we prove the Main Theorem.  To do so, we assume it is false, and let $G$ be a $k$-minimal counterexample 
(with $k=520$, say).  We use discharging to prove that $G$ must contain one of the configurations from the previous 
section, all of which we showed cannot appear in a $k$-minimal counterexample.  This contradiction proves the Main Theorem.
Before describing our discharging argument, we make the following observation.

\begin{obs}
Our graph $G$ has minimum degree at least $5$.
\end{obs}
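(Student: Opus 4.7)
The plan is to argue by contradiction, using the Extension Lemma (Lemma~\ref{extension:lem}) in exactly the same style as the proof of Lemma~\ref{linear-cereceda:lem}. Suppose that $G$ contains a vertex $v$ with $d_G(v)\le 4$. The strategy is to delete $v$, apply minimality to $G':=G-v$, and then re-insert $v$ cheaply enough that the resulting sequence is still $k$-good.

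First I would fix the two given $L$-colorings $\a$ and $\b$ of $G$ and restrict them to $G'$, obtaining $\a'$ and $\b'$. Since $G'$ is a plane graph with fewer vertices than $G$, minimality of $G$ gives a $k$-good recoloring sequence $\sigma'$ from $\a'$ to $\b'$. Each vertex of $N_G(v)$ is therefore recolored at most $k$ times in $\sigma'$, so the total number $s$ of recolorings in $N_G(v)$ satisfies $s\le d_G(v)\,k\le 4k$.

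Next I would apply the Extension Lemma to extend $\sigma'$ to a recoloring sequence $\sigma$ of $G$ from $\a$ to $\b$. The lemma guarantees that $v$ is recolored at most
\[
\Ceil{\tfrac{s}{|L(v)|-d_G(v)-1}}+1 \;\le\; \Ceil{\tfrac{4k}{10-4-1}}+1 \;=\; \Ceil{\tfrac{4k}{5}}+1
\]
times. For $k=416$ (indeed for any $k\ge 5$) this quantity is at most $k$, so every vertex of $G$ is recolored at most $k$ times in $\sigma$. Hence $\sigma$ is a $k$-good recoloring sequence from $\a$ to $\b$ in $G$, contradicting the assumption that $G$ is a $k$-minimal counterexample.

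There is essentially no obstacle here: the observation is the standard ``easy direction'' consequence of the Extension Lemma, analogous to how planarity's $5$-degeneracy interacts with list size $10$. The only thing to double-check is the arithmetic that $\Ceil{4k/5}+1\le k$, which holds comfortably for $k=416$. This is the reason the reducibility work in the previous section only needs to treat configurations in which every vertex has degree at least $5$.
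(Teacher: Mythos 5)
Your proposal is correct and is exactly the argument the paper intends: the paper's proof of the Observation is the single sentence ``This follows directly from the Extension Lemma, analogous to the proof of Lemma~\ref{linear-cereceda:lem},'' and you have simply spelled out that argument in full, with the correct bound $\Ceil{4k/5}+1\le k$.
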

\begin{proof}
This follows directly from the Extension Lemma, analogous to the proof of Lemma~\ref{linear-cereceda:lem}.
\end{proof}

To simplify the discharging arguments, if $G$ is not a triangulation, then we consider a supergraph $G^+$ of $G$, with $|G^+|=|G|$,
that is a plane triangulation.  In this section, we prove that $G^+$ contains as a configuration $H$ either a $4^-$-vertex or one of
(RC\subref{53a-fig})--(RC\subref{526273a-fig}).  By Lemma~\ref{edge-remove-lem}, when we restrict $H$ to its edges in $G$, we also get a reducible
configuration for $G$.

We assign to each vertex $v$ in $G$ a ``charge'' $d(v)-6$.  By Euler's formula, $\sum_{v\in V(G)}(d(v)-6) = 2|E(G)|-6|G| = 2(3|G|-6)-6|G| = -12$.  
Assuming that $G$ contains none of the configurations in the previous section, we 
redistribute charge so that each vertex $v$ ends with nonnegative charge; we say that such a vertex $v$ \Emph{ends happy}.
This yields an obvious contradiction, since the sum of nonnegative quantities is equal to $-12$.  (And this contradiction proves
our Main Theorem.) To redistribute charge, we use the following $6$ discharging rules, applied successively.
An $(h,i,j)$-face is a 3-face for which the multiset of degrees of incident vertices is $\{h,i,j\}$.  An $(h^+,i,j)$-face is
defined analogously, with one degree at least $h$.
\begin{itemize}
    \item[\linkdest{R1}{(R1)}] Let $v$ be a $7$-vertex and $w$ be a $5$-neighbor of $v$. If $vw$ lies on a $(7,5,5)$-face, then $v$ sends $1/4$ to $w$;
otherwise, $v$ sends $1/3$ to $w$.
    \item[\linkdest{R2}{(R2)}] If $v$ is a $7$-vertex and $w$ is a $6$-neighbor of $v$ and $vw$ does not lie on a $(7,6,5)$-face, then $v$ sends $1/6$ to $w$.
    \item[\linkdest{R3}{(R3)}] If $v$ is an $8^+$-vertex and $w$ is a $5$-neighbor of $v$, then $v$ sends $1/2$ to $w$.
    \item[\linkdest{R4}{(R4)}] If $v$ is an $8^+$-vertex and $w$ is a $6$-neighbor of $v$ and $vw$ does not lie on an $(8^+,6,5)$-face, then $v$ sends $1/4$ to $w$.
    \item[\linkdest{R5}{(R5)}]  
If a $6$-vertex $v$ has charge, but has no $5$-neighbor, then $v$ splits its charge equally among its $6$-neighbors (if any) that do have $5$-neighbors.
    \item[\linkdest{R6}{(R6)}] \label{R6} Each $6$-vertex with a $5$-neighbor splits its charge equally among its $5$-neighbors.
\end{itemize}

In the rest of the section, we show that every vertex ends happy (finishes with nonnegative charge).  We first handle the 
$6^+$-vertices, which requires only a straightforward case analysis, based on \Rule{1}--\Rule{4}.  The bulk of our work in this 
section goes to handling $5$-vertices.  For this we consider a number of cases based on the degrees of the neighbors of the 
$5$-vertex.  Since $G$ has no copy of (RC\subref{5165a-fig}), every $5$-vertex has at least one $7^+$-neighbor.
Note that a vertex receiving more charge than claimed never hinders that vertex from ending happy.  Thus, we focus on proving lower
bounds on the charge that each $5$-vertex receives.  With this in mind, we typically write that ``$v$ gives $\mu$ to $w$'' to mean
that vertex $v$ gives vertex $w$ charge at least $\mu$.

\begin{lem}
Each $6^+$-vertex ends happy.
\end{lem}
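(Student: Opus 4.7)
I will treat the three cases $d(v)=6$, $d(v)=7$, and $d(v)\ge 8$ separately.

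For $d(v)=6$, the vertex begins with charge $0$ and only receives charge (through \Rule{2} and \Rule{4}) from its $7^+$-neighbors. Then exactly one of \Rule{5} or \Rule{6} governs what happens next: if $v$ has a 5-neighbor, \Rule{6} transfers all of $v$'s charge to its 5-neighbors; otherwise, \Rule{5} either transfers the charge to 6-neighbors that have 5-neighbors, or, if no such recipient exists, $v$ simply retains its (nonnegative) received charge. In every subcase $v$ ends with nonnegative charge.

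For $d(v)=7$, the initial charge is $1$, and $v$ gives charge only via \Rule{1} and \Rule{2}. By Lemma~\ref{second-reduc-lem} (configuration (RC\subref{6771a-fig})), $v$ has at least one $7^+$-neighbor, so at most $6$ of $v$'s neighbors receive any charge. I plan to walk the cyclic sequence of neighbors of $v$ in the plane embedding and total the charge sent. The key combinatorial input is that by (RC\subref{53a-fig}) no 5-vertex has two 5-neighbors, so $v$ has no three consecutive 5-neighbors around its boundary. Moreover, each pair of consecutive 5-neighbors around $v$ triggers the reduced $1/4$ rate in \Rule{1} (saving $1/12$ per vertex), and each 6-neighbor that is cyclically adjacent to a 5-neighbor around $v$ contributes $0$ under \Rule{2} instead of $1/6$. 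I expect that the handful of ``hardest'' arrangements (three or four 5-neighbors interspersed with 6-neighbors) are ruled out by more specialized reducible configurations from Lemma~\ref{main-reduc-lem} such as (RC\subref{526371a-fig}) or (RC\subref{526571a-fig}).

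For $d(v)\ge 8$, the initial charge is $d(v)-6\ge 2$, and $v$ gives at most $1/2$ per 5-neighbor (\Rule{3}) and $1/4$ per 6-neighbor (\Rule{4}), with the $1/4$ rate dropping to $0$ on $(8^+,6,5)$-faces. I plan to use the same cyclic walk as for the $d(v)=7$ case, now using (RC\subref{5481a-fig}) to bound the 5-neighbors in bad arrangements and again using (RC\subref{53a-fig}) to forbid three consecutive 5-neighbors. Because the initial charge grows with $d(v)$ while the number of 5-neighbors in admissible arrangements does not, I expect this case to have considerable slack.

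The main obstacle is the $d(v)=7$ case, where the initial charge of $1$ is tight and several cyclic arrangements reach equality. The cleanest way to organize it should be to bundle each 5-neighbor together with the cyclically adjacent 6-neighbors whose \Rule{2} charge it zeros out, and show that each such bundle contributes an average of at most $1/6$ per neighbor-slot around $v$, summing to at most $1$; any cyclic arrangement that violates this average bound should be ruled out by one of the more specialized reducible configurations from Lemma~\ref{main-reduc-lem}.
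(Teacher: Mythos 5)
Your plan agrees with the paper on the easy pieces (the $6$-vertex case, where $v$ only ever receives or retains charge, and the $9^+$ case, where a local averaging bound over three consecutive neighbors works cleanly), but the crucial $7$-vertex case has a genuine gap, and the reducible configurations you point to are not the ones the paper actually uses here.

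First, your claim that ``by (RC\subref{6771a-fig}), $v$ has at least one $7^+$-neighbor'' is not correct. Configuration (RC\subref{6771a-fig}) only forbids a $7$-vertex \emph{all} of whose neighbors are $6$-vertices; once $v$ has a single $5$-neighbor the configuration is dodged, and a $7$-vertex with one $5$-neighbor and six $6$-neighbors is entirely possible. Your next step compounds this: you propose to show each bundle contributes at most $1/6$ per slot and conclude the total is at most $1$, but $v$ has $7$ slots, so a per-slot bound of $1/6$ only gives $7/6>1$. The two errors together are exactly why the argument as stated does not close; without a guaranteed $7^+$-slot contributing $0$, the $1/6$-per-slot estimate is too weak. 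The paper instead does a short, explicit case split on $s$, the number of $5$-neighbors ($s=0,1,2,3$): for $s=0$ it uses (RC\subref{6771a-fig}) to get a $7^+$-neighbor; for $s\ge 1$ it does not need a $7^+$-neighbor at all, because each $5$-neighbor $w$ forces the edges from $v$ to the two neighbors flanking $w$ onto $(7,6,5)$-faces and so saves $2\cdot\tfrac16$ of \Rule{2}-charge, which exactly pays for the extra $\tfrac13$ that $w$ takes. Tracking which savings are shared when $5$-neighbors are consecutive (saving $1/12$ each via the $1/4$ rate) is what drives the case analysis. The only reducible configurations the paper invokes in this lemma are (RC\subref{6771a-fig}), (RC\subref{53a-fig}) (for the $7$- and $9^+$-cases), and (RC\subref{5481a-fig}) (for the $8$-vertex, $s=4$ case); configurations like (RC\subref{526371a-fig}) and (RC\subref{526571a-fig}) that you name are used only in the much more delicate later lemmas about $5$-vertices, not here.

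If you want to rescue an averaging-style argument for $d(v)=7$, you would need to either (i) enlarge the bundles to absorb the full $1/3$ of an isolated $5$-neighbor and its two flanking slots (yielding $1/9$ per slot there, not $1/6$, and making the weighted total come out to at most $1$ only after tracking the $7^+$-slots and the double-counted flanks), or (ii) simply adopt the paper's enumeration by $s$, which is shorter and avoids the bookkeeping.
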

\label{lem1}
\begin{proof}
Each $6$-vertex starts with $0$.  By \Rule{5} and \Rule{6}, each $6$-vertex also ends with at least $0$ 
(and gives each $5$-neighbor nonnegative charge).

Let $v$ be a $7$-vertex.  Let $s$ be the number of $5$-neighbors of $v$; 
we consider the following cases for $s$.  Since $G$ has no copy of (RC\subref{5471a-fig}), we assume $s\le 3$.
First, $s=0$: If $v$ has seven $6$-neighbors, then $G$ has (RC\subref{6771a-fig}); so $v$ has at most six $6$-neighbors.  Thus, $v$ ends with at least $(7-6)-6(1/6)=0$.  
Next, $s=1$: Now two $6^+$-neighbors of $v$ each have a common (successive) $5$-neighbor with $v$, so cannot get charge from $v$.  Thus, $v$ ends with at least $1-1/3-(7-1-2)(1/6) = 1-1/3-4(1/6)=0$.

Next, $s=2$: Suppose $v$ has successive $5$-neighbors.  They each get $1/4$ from $v$ by \Rule{1}.  Also, two $6^+$-neighbors of $v$ 
have a common (successive) $5$-neighbor with $v$, so do not receive charge from $v$.  Thus, $v$ ends with at least 
$1-2(1/4)-(7-2-2)(1/6)=1-2(1/4)-3(1/6)=0$.  Suppose instead that the $5$-neighbors of $v$ are not successive.  Now each gets $1/3$, 
and at least three $6^+$-neighbors of $v$ get no charge from $v$ by \Rule{2} because they have a common (successive)
$5$-neighbor with $v$.  So $v$ ends with at least $1-2(1/3)-(7-2-3)(1/6)=0$.

Finally, $s=3$: If the three $5$-neighbors of $v$ are pairwise non-successive, then they each get $1/3$, and no other neighbor of 
$v$ gets charge by \Rule{2}.  So $v$ ends with $1-3(1/3)=0$.  Suppose instead that two $5$-neighbors of $v$ are succesive.
Each gets $1/4$ from $v$.  (Each $5$-vertex has at most one $5$-neighbor, or $G$ contains (RC\subref{53a-fig}).)  
The other $5$-neighbor of $v$ gets $1/3$.  Finally, at most one $6^+$-neighbor of $v$ gets charge $1/6$ from $v$.  
So $v$ ends with at least $1-2(1/4)-1/3-1/6=0$.
\smallskip

Now let $v$ be an $8$-vertex.  Again, let $s$ be the number of $5$-neighbors of $v$.
If $s=0$, then $v$ ends with at least $8-6-8(1/4) =0$.  If $s=1$, then $v$ ends with at least $2-1/2-(8-2-1)(1/4) = 2-1/2-5(1/4) > 0$. If $s=2$, then $v$ ends with at least $2-2(1/2)-4(1/4)=0$. If $s=3$, then not all three of its $5$-neighbors are consecutive, or
$G$ contains (RC\subref{53a-fig}), so $v$ ends with at least $2-3(1/2)-2(1/4)=0$.
Finally, if $s=4$, then $v$ cannot have three successive $5$-neighbors, or $G$ contains (RC\subref{53a-fig}); and $v$ also cannot have 
two pairs of successive $5$-neighbors, or $G$ contains (RC\subref{5481a-fig}).  
So each $6^+$-neighbor of $v$ has a successive common $5$-neighbor with $v$.  Thus, no $6^+$-neighbors
of $v$ get charge by \Rule{4}.  Hence, $v$ ends with $2-4(1/2)=0$.

Finally, let $v$ be a $9^+$-vertex.  If we consider any three successive neighbors of $v$, then at most two of them are 
$5$-neighbors, or $G$ contains (RC\subref{53a-fig}).  And if two of these are $5$-neighbors, then the third gets no charge 
from $v$ by \Rule{4}.  Thus, the average charge that $v$ gives to any three consecutive neighbors is at most $(1/2+1/2+0)/3=1/3$.  
As a result, the average charge that $v$ gives to all of its
neighbors is at most $1/3$.  So $v$ ends with at least $d(v)-6-d(v)/3 = 2(d(v)-9)/3 \ge 0$.
\end{proof}

\begin{lem}
Each pair of adjacent $5$-vertices ends happy.
\label{lem2}
\label{5-5lem}
\end{lem}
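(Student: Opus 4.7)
The plan is to analyze how much charge each of the two adjacent 5-vertices receives, based on the degrees of their common neighbors and of their remaining neighbors. Let $v, v'$ be adjacent 5-vertices in the plane triangulation $G$. Since $G$ is a triangulation, the edge $vv'$ bounds two triangular faces $vv'x$ and $vv'y$, giving two common neighbors $x, y$. The first step is to pin down the degrees of $x$ and $y$. By (RC\subref{53a-fig}), a 5-vertex has at most one 5-neighbor, so $x$ and $y$ must be $6^+$-vertices. By (RC\subref{5261a-fig}), a $(5,5,6)$-face is reducible, so neither $x$ nor $y$ can be a 6-vertex. Hence both $x$ and $y$ are $7^+$-vertices.

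The next step is a case split on the degrees of $x$ and $y$. If both are $8^+$-vertices, then by \Rule{3} each sends $1/2$ to $v$, so $v$ ends with $-1+1/2+1/2=0$; the analysis for $v'$ is symmetric, and both end happy. If exactly one of $x, y$ is a 7-vertex (say $x$), then $v$ receives $1/4$ from $x$ via \Rule{1} (since $vv'x$ is a $(7,5,5)$-face) and $1/2$ from $y$ via \Rule{3}, totaling $3/4$, so $v$ still needs at least $1/4$ more from its remaining two neighbors, call them $a, b$. If both $x$ and $y$ are 7-vertices, then $v$ receives only $1/4+1/4=1/2$ from them and needs another $1/2$ from $a, b$.

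In these harder cases, $a$ and $b$ are both $6^+$-vertices by (RC\subref{53a-fig}). If either is a $7^+$-vertex, it sends $1/3$ to $v$ via \Rule{1} or $1/2$ via \Rule{3}, which in most subcases covers the deficit: for instance, in the ``both 7'' case, if $a$ is $8^+$ then $v$ receives $1/2+1/2=1$ from $y$ and $a$, and if both $a, b$ are 7-vertices then $v$ receives $1/3+1/3>1/2$ from them. The truly delicate situations arise when one or both of $a, b$ is a 6-vertex; then $v$ must receive the missing charge via \Rule{6}, which in turn requires the 6-vertex to have acquired charge from a $7^+$-neighbor via \Rule{2} or \Rule{4}, or from a neighboring 6-vertex via \Rule{5}.

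The main obstacle will be to verify, in each such 6-vertex subcase, that the surrounding structure delivers enough charge. This is where the targeted reducible configurations of Figures~\ref{main-reduc-fig} and~\ref{more-reduc-fig}—especially those featuring a $(5,5,7)$-face next to a 6-vertex carrying several 5-neighbors, such as (RC\subref{526371a-fig}), (RC\subref{536471a-fig}), and (RC\subref{546671b-fig}), along with (RC\subref{526271a-fig}) and (RC\subref{526273a-fig}) from Lemma~\ref{more-reduc-lem}—play their role: each of them forbids a specific ``charge-starved'' local configuration around a 5-vertex with a 5-neighbor, forcing either a direct $7^+$-neighbor of $v$ or a 6-vertex neighbor whose own $7^+$-neighbors supply enough charge to be passed along via \Rule{5} and \Rule{6}. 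Once all such subcases are dispatched, both $v$ and $v'$ finish with nonnegative charge.
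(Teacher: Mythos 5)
Your setup matches the paper: the two common neighbors $x,y$ of the edge $vv'$ are $7^+$-vertices (by (RC\subref{5261a-fig})), and the case when both are $8^+$-vertices is immediate. Beyond that, though, the write-up never carries out the case analysis; it lists the cases and asserts that ``once all such subcases are dispatched'' the lemma follows. That is a plan rather than a proof, and the plan diverges from the paper's argument in two substantive ways.

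First, you aim to show that $v$ and $v'$ \emph{each individually} end with nonnegative charge. The paper does not prove that, and in several of the hard cases it is not what happens: when both common neighbors are $7$-vertices, the charge arriving by \Rule{1} and \Rule{3} can be quite lopsided between $v$ and $v'$ (e.g.\ $v'$'s two remaining neighbors are $8^+$-vertices while $v$'s are not, so $v$ alone collects only $1/4+1/4+1/3=5/6$). What the lemma needs --- and what the paper actually establishes in Case~C --- is that the \emph{combined} charge of the pair is nonnegative: the paper explicitly writes things like ``in total $v_1$ and $v_2$ get $2(1/2)+4(1/4)=2$,'' which offsets the pair's combined initial charge of $-2$. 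This weaker, aggregated bound suffices because, by (RC\subref{53a-fig}), the $5$-vertices of $G$ form a matching, so the discharging sum splits cleanly into singleton $5$-vertices and adjacent pairs. Tracking $v$ and $v'$ separately aims at a target the paper never hits and the lemma does not require.

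Second, your plan for the $6$-vertex subcases is to chase charge from remote $7^+$-vertices through $6$-vertices via \Rule{5} and \Rule{6}, citing (RC\subref{526371a-fig}), (RC\subref{536471a-fig}), and (RC\subref{546671b-fig}) as the mechanisms. None of those configurations appears in the proof of this lemma, and \Rule{5}/\Rule{6} are never invoked here at all. The argument is purely structural: (RC\subref{526271a-fig}), (RC\subref{526271b-fig}), and (RC\subref{526273a-fig}) --- the configurations in Lemma~\ref{more-reduc-lem} built precisely for this adjacent-$5_2$-pair situation --- forbid too many of $w_2,\dots,w_6$ from being $6$-vertices, so enough of the outer neighbors are $7^+$ to supply the pair's required total directly via \Rule{1}--\Rule{4}. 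You mention (RC\subref{526271a-fig}) and (RC\subref{526273a-fig}) but omit (RC\subref{526271b-fig}), which the paper invokes repeatedly, and the role you envision for the others (guaranteeing a charge-rich $6$-vertex beside $v$) is not the role any of these configurations plays.
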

\noindent
\textit{Proof.}
Let $v_1$ and $v_2$ be adjacent $5$-vertices and label the vertices of $N[\{v_1,v_2\}]$ as in Figure~\ref{5-5fig}.
Recall from (RC\subref{53a-fig}) that each component induced by $5$-vertices has at most 2 vertices.
So each $w_i$ is a $6^+$-vertex.
Note that $w_1$ must be a $7^+$-vertex; otherwise, $G[\{v_1,v_2,w_1\}]$ is (RC\subref{5261a-fig}).  
Similarly, $w_4$ is a $7^+$-vertex.

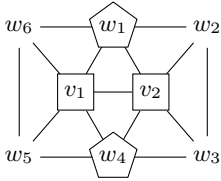
\begin{wrapfigure}[9]{l}{0.35\textwidth}
\centering
\begin{tikzpicture}[scale=1.0, yscale=.866, xscale=.5]
\tikzset{every node/.style=6vertB}
\draw 
(0,0) node[5vertB, inner sep=.5pt] (v1) {\footnotesize{$v_1$}}
--++ (1,1) node[7vertB, inner sep=0pt] (w1) {\footnotesize{$w_1$}}
--++ (1,-1) node[5vertB, inner sep=.5pt] (v2) {\footnotesize{$v_2$}}
--++ (1.5,1) node[unvertB] (w2) {\footnotesize{$w_2$}} 
--++ (0,-2) node[unvertB] (w3) {\footnotesize{$w_3$}} -- (v2)
--++ (-1,-1) node[7vertB, inner sep=0pt] (w4) {\footnotesize{$w_4$}} -- (v1)
--++ (-1.5,-1) node[unvertB] (w5) {\footnotesize{$w_5$}} 
--++ (0,2) node[unvertB] (w6) {\footnotesize{$w_6$}} --  (v1) -- (v2);
\draw (w6) -- (w1) -- (w2) (w3) -- (w4) -- (w5);
\end{tikzpicture}
\caption{Adjacent 5-vertices\label{5-5fig}}
\end{wrapfigure}

First suppose $w_1$ is an $8^+$-vertex.  If $w_4$ is also an $8^+$-vertex, 
then $v_1$ and $v_2$ each receive at least $2(1/2)=1$, so we are done.
Suppose instead that $w_4$ is a $7$-vertex.  
Now one of $w_2$ and $w_3$ is a $7^+$-vertex, or $N[v_2]$ contains (RC\subref{526271b-fig}).  
Similarly, one of $w_5$ and $w_6$ is a $7^+$-vertex.  
Now $v_1$ and $v_2$ each receive at least $1/2+1/3+1/4>1$, and we are done.  

Instead assume $w_1$ and $w_4$ are both $7$-vertices.  First suppose $w_2,w_3,w_5,w_6$ are all $7^-$-vertices.  Now we show that 
at least three of $w_2$, $w_3$, $w_5$, $w_6$ are $7$-vertices.  If $w_2$ and $w_3$ are both $6$-vertices, then 
$G[\{v_1,v_2,w_1,w_2,w_3\}]$ is (RC\subref{526271b-fig}).  And if $w_2$ and $w_6$ are both $6$-vertices, 
then $G[\{v_1,v_2,w_1,w_2,w_6\}]$ is (RC\subref{526271a-fig}).  
So at least one vertex in $\{w_2,w_3\}$ is a $7$-vertex.  Similarly for $\{w_3,w_5\}$; and $\{w_5,w_6\}$; 
and $\{w_6,w_2\}$.  If at most two of $w_2,w_3,w_5,w_6$ are $7$-vertices, then by symmetry we assume $w_2,w_5$ are 
$6$-vertices and $w_3,w_6$ are $7$-vertices.  But now $G[\{v_1,v_2,w_1,w_2,w_3,w_4,w_5\}]$ is (RC\subref{526273a-fig}).
Thus, at least three of $w_2$, $w_3$, $w_5$, and $w_6$ are $7$-vertices.  So the total received by $v_1$ and $v_2$
is at least $4(1/4)+3(1/3)=2$; thus, $v_1$ and $v_2$ end happy.

Assume instead 
that $w_2$ is an $8^+$-vertex.  
If any of $w_3,w_5,w_6$ is an $8^+$-vertex, then in total $v_1$ and $v_2$ get $2(1/2)+4(1/4)=2$.  So assume $w_3,w_5,w_6$ 
are $7^-$-vertices.  If at least two of them are $7$-vertices, then in total $v_1$ and $v_2$ get $1/2+4(1/4)+2(1/3)>2$.
So assume two of $w_3,w_5,w_6$ are $6$-vertices and one is a $7$-vertex.  
If $w_3$ and $w_5$ are $6$-vertices, then $G[\{v_1,v_2,w_3,w_4,w_5\}]$ is (RC\subref{526271a-fig}).
If $w_5$ and $w_6$ are $6$-vertices, then 
$G[\{v_1,v_2,w_4,w_5,w_6\}]$ is (RC\subref{526271b-fig}).
So assume $w_3$ and $w_6$ are $6$-vertices, and $w_5$ is a $7$-vertex.  
Now $G[\{v_1,v_2,w_1,w_3,w_4,w_5,w_6\}]$ is (RC\subref{526273a-fig}).
\hfill$\qed$

\begin{lem}
Each $5$-vertex with three (or more) $7^+$-neighbors ends happy.
\end{lem}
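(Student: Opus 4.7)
The plan is to dispatch this case quickly by combining the previous lemma with a direct computation. Let $v$ be a $5$-vertex with at least three $7^+$-neighbors; $v$ starts with charge $-1$ and I need to show it receives at least $1$.

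First I would split on whether $v$ has a $5$-neighbor. If $v$ is adjacent to another $5$-vertex, then the previous lemma (Lemma~\ref{5-5lem}) already shows that $v$ ends happy, so there is nothing to prove. Hence I may assume that every neighbor of $v$ has degree at least $6$; in particular, the other (at most two) neighbors of $v$ that are not among the specified three $7^+$-neighbors are each $6^+$-vertices.

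Under this assumption, the analysis of the charge from each $7^+$-neighbor is essentially forced. For any $8^+$-neighbor $w$ of $v$, rule~\Rule{3} gives $v$ charge $1/2$. For any $7$-neighbor $w$ of $v$, I want to rule out the unfavorable sub-case of~\Rule{1}: the edge $vw$ would have to lie on a $(7,5,5)$-face, but since $G$ is a plane triangulation the two faces containing $vw$ have third vertices that are common neighbors of $v$ and $w$, and each such third vertex is a neighbor of $v$, hence a $6^+$-vertex by the previous paragraph. Thus no such face is a $(7,5,5)$-face, and~\Rule{1} gives $v$ charge $1/3$ from $w$. Summing over the three $7^+$-neighbors, $v$ receives at least $3\cdot(1/3)=1$, so it ends with nonnegative charge.

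There is no real obstacle here, so I do not expect surprises; the one subtlety worth writing out carefully is the $(7,5,5)$-face check, which is why the reduction to the ``no $5$-neighbor'' case is essential before applying~\Rule{1}. Any additional charge received via~\Rule{5} or~\Rule{6} from $6$-neighbors is irrelevant for this lemma, since three $7^+$-neighbors already suffice.
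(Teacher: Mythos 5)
Your proof is correct and takes essentially the same approach as the paper's: reduce to the case of no $5$-neighbor via Lemma~\ref{5-5lem}, then observe each $7^+$-neighbor sends at least $1/3$. You've simply made explicit the $(7,5,5)$-face check that the paper's one-line computation leaves implicit.
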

\begin{proof}
By Lemma~\ref{lem2}, we assume $v$ has no $5$-neighbor.  So $v$ ends with at least $-1+3(1/3)=0$.
\end{proof}

Before we start handling various possibilities for the neighborhood of a $5$-vertex $v$, we first prove a few lemmas about
how much charge reaches $v$ from vertices at distance $2$.

\begin{lem}
Let $v$ be a $5$-vertex, and let $w_2,w_3,x_3$ be $6$-vertices, such that $v,w_2,w_3,x_3$ have all edges among them shown in Figure~\ref{helper1-fig}.  
If $w_2$ and $w_3$ each have no $5$-neighbor but $v$, then $x_3$ sends $v$ at least $1/6$ via $w_2$ and $w_3$. 
\label{helper1-lem}
\end{lem}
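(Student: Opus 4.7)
The plan is to trace the flow of charge from $x_3$ down to $v$ through the intermediate 6-vertices $w_2$ and $w_3$. The key observation is that, by hypothesis, $v$ is the only 5-neighbor of $w_2$ and the only 5-neighbor of $w_3$, so rule \Rule{6} forces each of these vertices to hand over \emph{all} of its accumulated charge to $v$. Hence it suffices to show that $x_3$ transmits a combined total of at least $1/6$ to $\{w_2,w_3\}$, since this entire amount then reaches $v$.

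First I would argue that $x_3$ itself has no 5-neighbor. If $x_3$ did have a 5-neighbor, then combining that neighbor with $v,w_2,w_3,x_3$ would realize one of the short $(5,6,6,\ldots)$-chain configurations already shown to be reducible (e.g.\ a pattern analogous to (RC\subref{526271a-fig}) or (RC\subref{526271b-fig})), contradicting $k$-minimality of $G$. Consequently, \Rule{5} applies at $x_3$: its entire charge is split equally among its 6-neighbors that do have 5-neighbors, a set which includes both $w_2$ and $w_3$.

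Next, I would establish a lower bound on $x_3$'s accumulated charge. Since a 6-vertex starts with charge $0$, any charge at $x_3$ must come from 7+-neighbors via rules \Rule{2} or \Rule{4}. I would show that at least one of $x_3$'s other four neighbors is a 7+-vertex and that the associated face is not of the exceptional $(7,6,5)$- or $(8^+,6,5)$-type (otherwise a 5-vertex appearing on that face, together with $v,w_2,w_3,x_3$, produces one of our reducible configurations), so that $x_3$ receives at least $1/3$ of charge in total. The main obstacle lies precisely here: the face exceptions in \Rule{2} and \Rule{4} can in principle block the incoming charge, and ruling this out requires careful use of the local structure of Figure~\ref{helper1-fig} combined with the reducibility results from Section~\ref{reducibility-sec}.

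Finally, because at most four of $x_3$'s six neighbors can be 6-vertices with 5-neighbors while still leaving room for the 7+-neighbor that supplies charge (any extra 5-vertex in the neighborhood of $x_3$ would again trigger a reducible configuration), splitting $x_3$'s charge of at least $1/3$ equally among at most four recipients sends at least $\tfrac{2}{4}\cdot\tfrac{1}{3}=\tfrac{1}{6}$ to $\{w_2,w_3\}$ combined. Rule \Rule{6} then forwards this entire amount to $v$, establishing the claim.
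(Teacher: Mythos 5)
Your framing is right in the first two observations: \Rule{6} forces $w_2$ and $w_3$ to hand everything to $v$, and $x_3$ has no $5$-neighbor by reducibility (though the relevant configuration is (RC\subref{5263a-fig}), not the $7$-vertex configurations you name — and once $x_3$ has no $5$-neighbor, the $(7,6,5)$/$(8^+,6,5)$ face exceptions you worry about are ruled out automatically). The gap is the claim that $x_3$ accumulates at least $1/3$. A single $7^+$-neighbor gives a $6$-vertex only $1/6$ under \Rule{2} (or $1/4$ under \Rule{4}), never $1/3$; that figure is what a $7$-vertex gives a $5$-vertex. So establishing that $x_3$ has one $7^+$-neighbor with unblocked charge only yields $1/6$. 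If $x_3$ then has four needy $6$-neighbors, dividing $1/6$ four ways sends $\{w_2,w_3\}$ only $1/12$ combined, short of the goal. Your final arithmetic therefore does not go through.

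The paper's proof resolves this with a case split you are missing. If $x_3$ has at least two $7^+$-neighbors, then it does receive at least $2(1/6)=1/3$ and your ``split among at most four'' computation works. But when $x_3$ has exactly one $7^+$-neighbor — a case that genuinely occurs — the argument is different: one shows, via the reducible configurations (RC\subref{5264a-fig}), (RC\subref{5265a-fig}), and (RC\subref{5266b-fig}), that no neighbor of $x_3$ other than $w_2$ and $w_3$ is needy. Then \Rule{5} splits $x_3$'s charge of at least $1/6$ only between $w_2$ and $w_3$, all of which reaches $v$. Your proposal has no mechanism for this second case and would need that additional reducibility work to close the lemma.
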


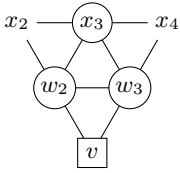
\begin{wrapfigure}[9]{l}{0.3\textwidth}
\centering
\begin{tikzpicture}[rotate = 60, yscale=.866, xscale=-.5]

\tikzset{every node/.style=6vertB}

\draw 
(1,1) node (x3) {\footnotesize{$x_3$}}
--++ (1,-1) node (w3) {\footnotesize{$w_3$}}
--++ (1,1) node (w2) {\footnotesize{$w_2$}}
--++ (1,-1) node[5vertB] (v) {\footnotesize{$v$}} 
(x3) --++ (1,1) node[unvertB] (x2) {\footnotesize{$x_2$}} -- (w2)
(x3) --++ (-1,-1) node[unvertB] (x4) {\footnotesize{$x_4$}} -- (w3)
(w3) -- (v) (x3) -- (w2);
\end{tikzpicture}
\captionsetup{width=.225\textwidth}
\caption{Vertex $x_3$ gives $1/6$ to $v$ via $w_2$ and $w_3$\label{helper1-fig}}
\end{wrapfigure}

\noindent
\textit{Proof.}
First note that $x_3$ has no $5$-neighbor $y$, since then $G[\{v, w_2, w_3, x_3,y\}]$ contains (RC\subref{5263a-fig}).
Note also that $x_3$ must have at least one $7^+$-neighbor; otherwise, $G[N[x_3]]$ contains (RC\subref{67a-fig}).
If $x_3$ has at least two $7^+$-neighbors, then they each give $1/6$ to $x_3$ (since $x_3$ has no $5$-neighbors).  Now $x_3$ 
has at least $1/3$ and splits it among at most four vertices by \Rule{5}, two of which are $w_2$ and $w_3$.  
Since $w_2$ and $w_3$ each have no $5$-neighbor but $v$, they send all of this charge to $v$ by \Rule{6}.  So $v$ gets $1/6$, as desired.

So assume that $x_3$ has only a single $7^+$-neighbor; note that it gives $1/6$ to $x_3$ by (R2) or (R3).  We will show that $w_2$
and $w_3$ are the only $6$-neighbors of $x_3$ with a 5-neighbor, so all the charge that $x_3$ gets continues on to $w_2$ and $w_3$
by \Rule{5}, and eventually to $v$ by \Rule{6}.  Suppose that $x_2$ (or $x_4$) is a $7^+$-vertex and all other neighbors of $x_3$ 
are $6$-vertices.  If any neighbor of $x_3$ other than $w_2$ or $w_3$ has a $5$-neighbor, then $G$ contains 
(RC\subref{5264a-fig}), (RC\subref{5265a-fig}), or (RC\subref{5266b-fig}).  
If not, then $x_3$ gets $1/6$ from $x_2$ by \Rule{2}, and sends it all to $v$ via $w_2$ and $w_3$, by \Rule{6}.

Finally, suppose $x_3$ has a single $7^+$-neighbor, a vertex other than $x_2$ and $x_4$; call it $y$.  
If $x_2$ has a $5$-neighbor $y'$, then $G[\{v,w_2,w_3,x_2,x_3,y'\}]$ 
contains (RC\subref{5264a-fig}).  So $x_2$ has no $5$-neighbor; likewise for $x_4$.
A similar argument works for the neighbors of $x_3$ outside of $\{w_2,w_3,x_2,x_4\}$, or else $G$ contains (RC\subref{5265a-fig}).
So $x_3$ receives $1/6$ from its $7^+$-neighbor and passes it all to $v$ via $w_2$ and $w_3$. 
\hfill$\qed$

\bigskip

A $6$-vertex is \Emph{needy} if it has at least one $5$-neighbor. 

\begin{rem}
    Occasionally, we mention a figure slightly before we have proved all the degree bounds that it illustrates.  This is intended
    to offer the reader support as early as possible; and typically, we prove the remaining degree bounds shortly thereafter.
    Similarly, our figures often illustrate subcases (where more degree bounds are known), rather than the general cases.
\end{rem}

\begin{lem}
Let $v$ be a $5$-vertex, and $w_2,w_3,x_3$ be $6$-vertices. Suppose $v,w_2,w_3,x_2,x_3$ induce all edges shown among them
in Figure~\ref{helper2-fig}.  If $v$ is $w$'s only $5$-neighbor, then $x_2$ gives $1/6$ to $v$ via $w_2$.
\label{helper2-lem}
\end{lem}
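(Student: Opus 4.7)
The plan is to mirror the strategy of Lemma~\ref{helper1-lem}. Since $w_2$ is a $6$-vertex with $v$ as its unique $5$-neighbor, \Rule{6} forwards every unit of charge arriving at $w_2$ directly to $v$. So it suffices to prove that $x_2$ delivers at least $1/6$ to $w_2$, either directly via \Rule{2}/\Rule{4} or indirectly via \Rule{5}. Since $G$ has minimum degree $5$, and since a $5$-vertex $x_2$ would be a second $5$-neighbor of $w_2$ (contradicting the hypothesis), I only need to handle $x_2$ being a $7^+$-vertex or a $6$-vertex.

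In the $7^+$-vertex case, the two faces at edge $w_2x_2$ have third vertices $x_3$ (a $6$-vertex by hypothesis) and some other neighbor of $w_2$; the latter cannot be a $5$-vertex, since $v$ is $w_2$'s only $5$-neighbor. Hence $w_2x_2$ lies on no $(7,6,5)$- or $(8^+,6,5)$-face, and \Rule{2}/\Rule{4} deliver at least $1/6$ to $w_2$ directly.

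The substantive case is when $x_2$ is a $6$-vertex. Following Lemma~\ref{helper1-lem}, I first show $x_2$ has no $5$-neighbor: a $5$-neighbor $y$ of $x_2$ would combine with $v,w_2,x_2$ (and the shared neighbor $x_3$ or $w_3$) to produce one of (RC\subref{5262a-fig}), (RC\subref{5263a-fig}), or a near cousin such as (RC\subref{5266b-fig}). Consequently \Rule{5} redistributes all of $x_2$'s charge among its needy $6$-neighbors, of which $w_2$ is one. Moreover $x_2$ has at least one $7^+$-neighbor, else $G[N[x_2]]$ contains (RC\subref{67a-fig}); so $x_2$ holds charge at least $1/6$.

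The main obstacle is the split: naively $w_2$ gets only a fraction of $x_2$'s charge via \Rule{5}. I will handle this by bounding the number of needy $6$-neighbors of $x_2$ against the number of its $7^+$-neighbors. If $w_2$ is $x_2$'s only needy $6$-neighbor, it receives all of $x_2$'s charge, and we are done. Otherwise, any additional needy $6$-neighbor of $x_2$ produces, together with $v,w_2,x_2$ and their surrounding structure, one of the configurations (RC\subref{5264a-fig})--(RC\subref{5266b-fig}), each of which is reducible; to avoid them, $x_2$ must carry enough extra $7^+$-neighbors (and hence enough incoming charge via \Rule{2}/\Rule{4}) that the per-needy-neighbor share remains at least $1/6$. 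A brief enumeration of the possible arrangements of $x_2$'s neighborhood, exactly paralleling the closing paragraph of the proof of Lemma~\ref{helper1-lem}, then finishes the argument.
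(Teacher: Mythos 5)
Your plan follows the paper's proof essentially step for step: reduce to showing $x_2$ passes at least $1/6$ to $w_2$, dispatch the $7^+$ case immediately, then in the $6$-vertex case rule out a $5$-neighbor of $x_2$ (the relevant configuration is actually the short path (RC\subref{5262a-fig}), not the ones you list, but the idea is right), invoke (RC\subref{67a-fig}) to get a $7^+$-neighbor of $x_2$, and finally control the split in \Rule{5}.

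The one place where your sketch misstates the mechanism is the final step. It is \emph{not} true that every additional needy $6$-neighbor of $x_2$ forces a reducible configuration; in the paper's labelling (Figure~\ref{helper2-fig}) the two neighbors $y_1$ and $y_2$ of $x_2$ that are ``far'' from $v$ are each allowed to be needy individually — it is only having \emph{both} of them needy that produces (RC\subref{5366a-fig}), and the three neighbors $x_1$, $x_3$, $y_3$ adjacent to $w_2$ or $x_3$ that are unconditionally not needy. The actual argument is then conditional: if $x_2$ has two or more $7^+$-neighbors among $\{x_1,y_1,y_2,y_3\}$, the charge $2(1/6)$ split among at most two needy neighbors already delivers $1/6$ to $w_2$; whereas if $x_2$ has exactly one $7^+$-neighbor, a \emph{further} batch of reducible configurations ((RC\subref{5266a-fig}), (RC\subref{5267a-fig}), (RC\subref{5267b-fig}), (RC\subref{526571a-fig})) is needed to show $w_2$ is then $x_2$'s \emph{only} needy neighbor, so all of $x_2$'s $1/6$ goes to $w_2$. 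Your informal ``per-needy-neighbor share stays $\ge 1/6$'' captures the arithmetic target, but filling it in honestly requires this two-case structure rather than a uniform forbidden-needy-neighbor claim. With that caveat noted, the approach is the paper's.
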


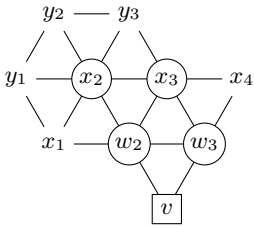
\begin{wrapfigure}[11]{l}{0.3\textwidth}
\centering
\begin{tikzpicture}
\begin{scope}[rotate=60, yscale=.866, xscale=-.5]
\tikzset{every node/.style=6vertB}

\draw 
(1,1) node (x3) {\footnotesize{$x_3$}}
--++ (1,-1) node (w3) {\footnotesize{$w_3$}}
--++ (1,1) node (w2) {\footnotesize{$w_2$}}
--++ (1,-1) node[5vertB] (v) {\footnotesize{$v$}} 
(x3) --++ (1,1) node (x2) {\footnotesize{$x_2$}} -- (w2)
(x3) --++ (-1,-1) node[unvertB] (x4) {\footnotesize{$x_4$}} -- (w3)
(w3) -- (v) (x3) -- (w2);
\draw (4,2) node[unvertB] (x1) {\footnotesize{$x_1$}} --
(3,3) node[unvertB] (y1) {\footnotesize{$y_1$}} --
(1,3) node[unvertB] (y2) {\footnotesize{$y_2$}} --
(0,2) node[unvertB] (y3) {\footnotesize{$y_3$}};
\draw (w2) -- (x1) -- (x2) -- (y1) (y2) -- (x2) -- (y3) -- (x3);
\end{scope}
\end{tikzpicture}
\captionsetup{width=.225\textwidth}
\caption{Vertex $x_2$ gives $1/6$ to $v$ via $w_2$\label{helper2-fig}}
\end{wrapfigure}

\noindent
\textit{Proof.}
The proof is similar to that of Lemma~\ref{helper1-lem} above.  If $x_2$ is a $7^+$-vertex, then it gives $1/6$ to $v$ via $w_2$.
So assume $x_2$ is a $6$-vertex.

None of $x_1,x_3,y_3$ is needy, or $G$ contains (RC\subref{5263a-fig}) or (RC\subref{5265a-fig}).  Similarly, at most one of
$y_1$ and $y_2$ is needy, or $G$ contains~(RC\subref{5366a-fig}).  If at least two of $x_1,y_1,y_2,y_3$ is a $7^+$-vertex,
then $x_2$ gets at least $2(1/6)$ and gives at least half of this to $w_2$, and on to $v$.  Suppose instead that exactly one of
$x_1,y_1,y_2,y_3$ is a $7^+$-vertex.  

Now no neighbors of $x_2$ are needy except for $w_2$, or else $G$ contains (RC\subref{5266a-fig}),
(RC\subref{5267a-fig}), (RC\subref{5267b-fig}), or (RC\subref{526571a-fig}) with a $7$-vertex replaced by a $6$-vertex, which we 
denote by (RC\subref{526571a-fig}$'$).
Specifically, we list the possible triples of (respectively) a $7^+$-neighbor of $x_2$, a needy neighbor of $x_2$, and the 
reducible configuration: 
$(x_1,y_1,$ RC\subref{5267a-fig}), 
$(x_1,y_2,$ RC\subref{5266a-fig}), 
$(y_1,y_2,$ RC\subref{5266a-fig}), 
$(y_2,y_1,$ RC\subref{526571a-fig}$'$), 
$(y_3,y_1,$ RC\subref{526571a-fig}$'$), and
$(y_3,y_2,$ RC\subref{5267b-fig}). 
\hfill$\qed$

\begin{lem}
Each $5$-vertex with a $7^+$-neighbor and another $8^+$-neighbor ends happy.
\end{lem}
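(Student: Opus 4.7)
The plan is as follows. Let $v$ be a $5$-vertex with a $7^+$-neighbor $u$ and another $8^+$-neighbor $u'$. By Lemma~\ref{5-5lem}, I may assume $v$ has no $5$-neighbor; thus, all remaining neighbors of $v$ are $6^+$-vertices. I first dispatch two easy subcases. If $u$ is itself $8^+$, then by \Rule{3} the pair $\{u,u'\}$ delivers $1/2+1/2=1$, so $v$ ends with charge at least $0$. And if $v$ has a third $7^+$-neighbor, then the previous lemma (on $5$-vertices with three or more $7^+$-neighbors) already applies.

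In the remaining main case, $v$ has exactly one $8^+$-neighbor $u'$, exactly one $7$-neighbor $u$, and three $6$-neighbors. By \Rule{3}, $u'$ sends $1/2$ to $v$; by \Rule{1}, $u$ sends $1/3$ (not $1/4$), because $v$ has no $5$-neighbor and so neither face at edge $vu$ is a $(7,5,5)$-face. So $v$ receives $5/6$ directly, and needs another $1/6$.

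To secure this final $1/6$, I exploit the cyclic arrangement of $v$'s five neighbors. Since any three vertices in $C_5$ include two that are adjacent, two of the three $6$-neighbors of $v$ must be consecutive around $v$; call them $w$ and $w'$, and let $x$ be the third vertex of the triangle on the side of edge $ww'$ opposite $v$. If $x$ is a $6$-vertex, then Lemma~\ref{helper1-lem} applied with $w, w', x$ in the roles of $w_2, w_3, x_3$ delivers $1/6$ to $v$ via $w$ and $w'$. If instead $x$ is $7^+$, then the face $xww'$ has vertex degrees $(7^+, 6, 6)$, so is neither a $(7,6,5)$- nor an $(8^+,6,5)$-face; therefore \Rule{2} or \Rule{4} supplies at least $1/6$ to $w$ from $x$, and \Rule{6} then forwards this charge to $v$ (since $w$ is a $6$-vertex with $v$ as a $5$-neighbor).

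The main obstacle is the set of side conditions embedded in this argument: Lemma~\ref{helper1-lem} requires $w$ and $w'$ to have $v$ as their unique $5$-neighbor, the \Rule{6} forwarding step requires $w$ not to split its charge among several $5$-neighbors, and the \Rule{2}/\Rule{4} invocation requires the \emph{other} face at edge $xw$ also to avoid being a $(7,6,5)$- or $(8^+,6,5)$-face. Each way these conditions can fail places $v$ inside a small subgraph with an extra nearby $5$-vertex, and I would rule these cases out by exhibiting a reducible configuration from Figure~\ref{main-reduc-fig} or Figure~\ref{more-reduc-fig}, in exactly the style used to close Lemmas~\ref{helper1-lem} and~\ref{helper2-lem}.
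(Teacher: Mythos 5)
Your opening moves agree with the paper: reduce to the case of one $8^+$-neighbor $w_1$, one $7$-neighbor, and three $6$-neighbors (using Lemma~\ref{5-5lem} and the "three $7^+$-neighbors" lemma), observe that \Rule{1} gives $1/3$ because $v$ has no $5$-neighbor, and conclude $v$ needs only a further $1/6$. The pivot to a consecutive pair of $6$-neighbors $(w,w')$ and the opposite triangle vertex $x$ is also essentially the paper's first step in each case. The problem is that everything you compress into "I would rule these cases out by exhibiting a reducible configuration" is where the proof actually lives, and your two-pronged framework (apply Lemma~\ref{helper1-lem} if $x$ is a $6$-vertex, else route $1/6$ through one of $w,w'$ directly) is not rich enough to close all of it.

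Concretely, the side conditions do \emph{not} always fail to a reducible configuration; sometimes the paper simply has to take charge from a vertex your framework never looks at. Two examples. First, in the "three consecutive $6$-neighbors" case with $x_3$ and $x_5$ both $6$-vertices, the paper gets the missing $1/6$ from $x_4$ (the vertex opposite the \emph{middle} $6$-neighbor $w_3$) via \Rule{2}/\Rule{4} and \Rule{6}; $x_4$ is not the outer vertex of any consecutive $6$-neighbor pair, so it is invisible to your scheme. Second, in the "split" case (Figure~\ref{5-86766fig}) with $x_3$ a $6$-vertex, if $x_5$ is a $5$-vertex then $w_3$ has a $5$-neighbor other than $v$, Lemma~\ref{helper1-lem} is blocked, and there is no reducible configuration among $\{v,w_2,w_3,x_3,x_5\}$; the paper instead pulls the $1/6$ through $x_2$ using Lemma~\ref{helper2-lem}, after first arguing that $x_4$ must be the unique $7^+$-neighbor of $x_3$ and that $x_2$ is a non-needy $6$-vertex. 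You do cite Lemma~\ref{helper2-lem} by name, but your stated framework ("let $x$ be the third vertex of the triangle on edge $ww'$") gives no route to it, and more broadly your plan assumes each obstruction is one reducible-configuration check away, when in several subcases the real move is to change charge source, re-examine the second neighborhood, and then call one of the helper lemmas under conditions that have to be verified from scratch. That gap is substantial, not cosmetic.
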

\noindent
\textit{Proof.}
If a $5$-vertex $v$ has at least two $8^+$-neighbors, then $v$ gets $2(1/2)=1$, and we are done.
So assume that the $7^+$-neighbor is a $7$-neighbor.  

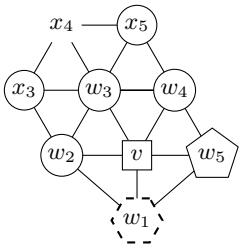
\begin{wrapfigure}[11]{l}{0.3\textwidth}
\centering
\begin{tikzpicture}[rotate=90, rotate=30, yscale=.866, xscale=-.5]

\tikzset{every node/.style=6vertB}

\draw 
(1,1) node (x3) {\footnotesize{$x_3$}}
--++ (1,-1) node (w3) {\footnotesize{$w_3$}}
--++ (1,1) node (w2) {\footnotesize{$w_2$}}
--++ (1,-1) node[5vertB] (v) {\footnotesize{$v$}} 
(x3) --++ (-1,-1) node[unvertB] (x4) {\footnotesize{$x_4$}} -- (w3)
(w3) -- (v) (x3) -- (w2);
\draw (3,-1) node (w4) {\footnotesize{$w_4$}};
\draw (5,-1) node[7vertB] (w5) {\footnotesize{$w_5$}};
\draw[dashed, thick] (5.5,0.5) node[8vertB] (w1) {\footnotesize{$w_1$}};
\draw (w3) -- (w4) -- (w5) -- (w1) 
(w2) -- (w1) -- (v) -- (w5) (w3) -- (w4) -- (v);
\draw (1,-1) node (x5) {\footnotesize{$x_5$}};
\draw (x4) -- (x5) -- (w4) (w3) -- (x5);
\end{tikzpicture}
\captionsetup{width=.275\textwidth}
\caption{A 5-vertex with an $8^+$-neighbor, a $7$-neighbor, and three succesive $6$-neighbors\label{5-87666afig}}
\end{wrapfigure}

Suppose that $w_1$ is an $8^+$-vertex, $w_5$ is a $7$-vertex, and $w_2, w_3, w_4$ are all $6$-vertices;  
see Figure~\ref{5-87666afig}.
Note that $x_3$ is a $6^+$-vertex, or else $G[\{v,w_2,w_3,x_3\}]$ contains (RC\subref{5262a-fig}).
Similarly, $x_5$ is a $6^+$-vertex. 
Suppose that $x_3$ is a $7^+$-vertex.  If $x_4$ is a $6^+$-vertex, then $x_3$ gives $1/6$ to $w_3$ and all of that
charge continues to $v$.  So $v$ gets $1/2+1/3+1/6=1$, and we are done.
But if $x_4$ is a $5$-vertex, then $w_2$ has no $5$-neighbor $y$, or $G[\{v,w_2,w_3,x_4,y\}]$ contains 
(RC\subref{5362a-fig}).  So $x_3$ gives $1/6$ to $w_2$, and this all reaches $v$.
So we instead assume $x_3$ is a $6$-vertex.  Similarly, we assume $x_5$ is a $6$-vertex.  
If $x_4$ is a $6^-$-vertex, then $N[w_3]$ contains (RC\subref{67a-fig}); so assume $x_4$ is a $7^+$-vertex. 
But now $x_4$ gives $1/6$ to $v$ via $w_3$, so we are done.

Suppose instead that $w_1$ is an $8^+$-vertex, that $w_4$ is a $7$-vertex, and that $w_2, w_3, w_5$ are all $6$-vertices;
see Figure~\ref{5-86766fig}.
Note that $x_3$ is a $6^+$-vertex, or else $G[\{v,w_2,w_3,x_3\}]$ contains (RC\subref{5262a-fig}).
If $x_3$ is a $7^+$-vertex, then at most one of $w_2$ and $w_3$ has a $5$-neighbor other than $v$, or $G$ contains
(RC\subref{5362a-fig}).  So $x_3$ gives at least $1/6$ to $w_2$ or $w_3$ that all continues on to $v$; 
now $v$ gets at least $1/2+1/3+1/6=1$. 
Thus, we assume $x_3$ is a $6$-vertex.  

\begin{wrapfigure}[12]{l}{0.3\textwidth}
\centering
\begin{tikzpicture}[rotate=90, rotate=30, yscale=.866, xscale=-.5]
\tikzset{every node/.style=6vertB}

\draw 
(1,1) node (x3) {\footnotesize{$x_3$}}
--++ (1,-1) node (w3) {\footnotesize{$w_3$}}
--++ (1,1) node (w2) {\footnotesize{$w_2$}}
--++ (1,-1) node[5vertB] (v) {\footnotesize{$v$}} 
(x3) --++ (1,1) node[unvertB] (x2) {\footnotesize{$x_2$}} -- (w2)
(x3) --++ (-1,-1) node[unvertB] (x4) {\footnotesize{$x_4$}} -- (w3)
(w3) -- (v) (x3) -- (w2);
\draw (w1) -- (4,2) node[unvertB] (x1) {\footnotesize{$x_1$}} 
(0,2) node[unvertB] (y3) {\footnotesize{$y_3$}};
\draw (w2) -- (x1) -- (x2) -- (y3) -- (x3);
\draw (3,-1) node[7vertB] (w4) {\footnotesize{$w_4$}};
\draw (5,-1) node (w5) {\footnotesize{$w_5$}};
\draw[dashed, thick] (5.5,0.5) node[8vertB] (w1) {\footnotesize{$w_1$}};
\draw (w3) -- (w4) -- (w5) -- (w1) 
(w2) -- (w1) -- (v) -- (w5) (w3) -- (w4) -- (v);
\draw (1,-1) node[unvertB] (x5) {\footnotesize{$x_5$}};
\draw (x4) -- (x5) -- (w4) (w3) -- (x5);
\draw (-1,1) node[unvertB] (y4) {\footnotesize{$y_4$}};
\draw (y3) -- (y4) -- (x4) (y4) -- (x3);
\end{tikzpicture}
\captionsetup{width=.275\textwidth}
\caption{A 5-vertex with an $8^+$-neighbor, a $7$-neighbor, and three non-succesive $6$-neighbors\label{5-86766fig}}
\end{wrapfigure}
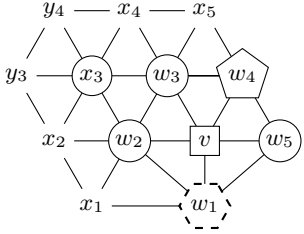

Note that $x_3$ has no $5$-neighbor $y$, or $G[\{v,w_2,w_3,x_3,y\}]$ contains (RC\subref{5263a-fig}).
Note also that one of $x_2$, $x_4$, $y_3$, $y_4$ must be a $7^+$-vertex, or else $G[N[x_3]]$ contains (RC\subref{67a-fig}).
We show that $x_3$ gives at least $1/6$ to $v$ via $w_2$ or $w_3$.  First note that neither $x_2$ nor $x_4$ is needy, or else 
$G$ contains (RC\subref{5264a-fig}).  If both $y_3$ and $y_4$ are needy, say with $5$-neighbors $z'$ and $z''$,
then $G[\{v,w_2,w_3,x_3,y_3,y_4,z',z''\}]$ contains (RC\subref{5365a-fig}).  So at most one of $y_3$ and $y_4$ is needy.

Suppose that at least two of $x_2, x_4, y_3, y_4$ are $7^+$-vertices.  Each of these $7^+$-vertices gives $1/6$ to $x_3$,
so $x_3$ gets $1/3$.  This $1/3$ is split among at most $3$ vertices, two of which are $w_2$ and $w_3$.  As noted above,
at most one of $w_2$ and $w_3$ has a $5$-neighbor other than $v$ (and at most one such $5$-neighbor).  So $1/9$ goes to $w_2$ 
or $w_3$ and on to $v$.  Another $1/9$ goes to the other of $w_2$ and $w_3$, and at least one half of it continues to $v$.
Thus, $v$ receives from $x_3$ (via $w_2$ and $w_3$) at least $1/9+(1/9)/2 = 2/18+1/18 =1/6$.  Hence $v$ gets at least $1/2+1/3+1/6=1$.

Assume instead that exactly one of $x_2, x_4, y_3, y_4$ is a $7^+$-vertex.  If neither $w_2$ nor $w_3$ has a $5$-neighbor (but 
$v$), then $x_3$ gives at least $1/6$ to $v$ (via $w_2$ and $w_3$) by Lemma~\ref{helper1-lem}.  We handle the case that $x_5$ 
is a $5$-vertex, but the argument is similar when $x_1$ is a $5$-vertex.  Now $x_4$ is a $7^+$-vertex, or else $N[w_3]$ contains
(RC\subref{5264a-fig}).  Thus, $x_2$ is a $6$-vertex (since $x_3$ has exactly on $7^+$-neighbor).  
But now $x_2$ has no $5$-neighbor $y$, or else $G[\{v,w_2,w_3,x_3,x_2,y\}]$ contains (RC\subref{5264a-fig}).  
By Lemma~\ref{helper2-lem}, vertex $x_2$ gives $1/6$ to $v$ via $w_2$.
Thus, $v$ gets at least $1/2+1/3+1/6=1$, and we are done.
\hfill$\qed$

\smallskip

\begin{lem}
Each $5$-vertex with a $7^+$-neighbor and four $6$-neighbors ends happy.
\end{lem}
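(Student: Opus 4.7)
\noindent
\textit{Proof plan.}
The plan is to show $v$ collects charge at least $1$, so that it ends with at least $5-6+1=0$.  Label the neighbors of $v$ in cyclic order as $w_1,\ldots,w_5$, with $w_1$ the unique $7^+$-neighbor and $w_2,w_3,w_4,w_5$ the four $6$-neighbors.  By Lemma~\ref{5-5lem} we may assume $v$ has no $5$-neighbor, and by (RC\subref{5261a-fig}) each $w_i$ with $i\in\{2,3,4,5\}$ has $v$ as its only $5$-neighbor, so (R6) forwards all of $w_i$'s charge to $v$.  The same configuration (RC\subref{5261a-fig}), applied at each $w_i$, forces every ``second-ring'' vertex $x_{i,i+1}$ (the non-$v$ common neighbor of $w_i$ and $w_{i+1}$) and every ``middle'' outer neighbor $x_{i,m}$ of $w_i$ to be a $6^+$-vertex, because any such $5$-vertex would give $w_i$ two $5$-neighbors.

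Next I would account for the direct contribution of $w_1$: if $w_1$ is an $8^+$-vertex, (R3) gives $1/2$ to $v$; if $w_1$ is a $7$-vertex, (R1) gives $1/3$ to $v$, since the face $vw_1w_2$ has degrees $(5,7,6)$ and is not a $(7,5,5)$-face.  Thus it suffices to produce $1/2$ (resp.\ $2/3$) of further charge from $\{w_2,w_3,w_4,w_5\}$, and in particular it suffices to show each such $w_i$ forwards at least $1/6$ to $v$.  For $i\in\{3,4\}$ the inner neighbors $v,w_{i-1},w_{i+1}$ of $w_i$ are all $6^-$, so (RC\subref{67a-fig}) forces $w_i$ to have a $7^+$ outer neighbor $y$; since every other neighbor of $w_i$ is $6^+$, neither face containing $w_iy$ is a $(7^+,6,5)$-face, and (R2)/(R4) delivers $\geq 1/6$ to $w_i$, which (R6) forwards to $v$.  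For $i\in\{2,5\}$ the edge $w_iw_1$ lies on the $(7^+,6,5)$-face $vw_1w_i$ so $w_1$ sends no charge directly to $w_i$; nevertheless, if $w_i$ has any $7^+$ outer neighbor the previous argument still yields $\geq 1/6$.

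The remaining, and main, difficulty is the sub-case in which $w_2$ (and, symmetrically, possibly $w_5$) has no $7^+$ outer neighbor, so that $x_{1,2},x_{2,m},x_{2,3}$ are all forced to be $6$-vertices.  Here I would invoke Lemma~\ref{helper2-lem} with $x_2:=x_{2,m}$ and $x_3:=x_{2,3}$ (whose hypotheses hold because $w_2,w_3,x_{2,3}$ are $6$-vertices and $w_2$ has $v$ as only $5$-neighbor) to route $\geq 1/6$ from $x_{2,m}$ through $w_2$ to $v$, meeting $w_2$'s $1/6$ obligation, with a mirror-image argument handling $w_5$.  Lemma~\ref{helper1-lem} applied to $x_{2,3}$ serves as a backup in any residual configuration where Lemma~\ref{helper2-lem} cannot be invoked.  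The delicate part is verifying the side hypotheses of these helper lemmas under every admissible local structure of the second and third rings around $v$, which in turn rests on ruling out nearby copies of the reducible configurations (RC\subref{5263a-fig}), (RC\subref{5264a-fig}), (RC\subref{5265a-fig}), (RC\subref{5266a-fig}), (RC\subref{5267a-fig}), (RC\subref{526571a-fig}), and (RC\subref{536571a-fig}); this bookkeeping of forbidden neighborhoods is where the proof becomes technical.
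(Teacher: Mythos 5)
Your proposal breaks down at the first structural reduction, and the error propagates. The claim that (RC\subref{5261a-fig}) forces each $w_i$ with $i\in\{2,3,4,5\}$ to have $v$ as its \emph{only} $5$-neighbor is incorrect. Configuration (RC\subref{5261a-fig}) is a triangle consisting of two \emph{adjacent} $5$-vertices with a common $6$-neighbor; it rules out a $6$-vertex having two mutually adjacent $5$-neighbors, but says nothing about two non-adjacent ones. In particular, the ``middle'' outer neighbor $x_{i,m}$ of $w_i$ is not adjacent to $v$, so $x_{i,m}$ being a $5$-vertex produces no copy of (RC\subref{5261a-fig}). Thus neither your claim that every $x_{i,m}$ is a $6^+$-vertex, nor your claim that each $w_i$ has $v$ as its unique $5$-neighbor, is established. (The part of your claim concerning the ``second-ring'' common neighbors $x_{i,i+1}$ is true but follows from (RC\subref{5262a-fig}), not (RC\subref{5261a-fig}).)

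This gap is load-bearing. Your goal of showing that each $w_i$ forwards at least $1/6$ to $v$ uses (a) that $w_i$ sends \emph{all} of its charge to $v$ under (R6), and (b) that $w_i$'s $7^+$-outer neighbor $y$ does not lie on a $(7^+,6,5)$-face with $w_i$. If $x_{i,m}$ is a $5$-vertex, both fail: for (a), $w_i$ splits its charge between $v$ and $x_{i,m}$; for (b), the face $w_iyx_{i,m}$ becomes a $(7^+,6,5)$-face, so $y$ sends nothing to $w_i$ across $w_iy$ under (R2)/(R4). The paper's own proof only establishes the weaker fact that \emph{at most one} of $w_2,\dots,w_5$ has a $5$-neighbor other than $v$ (via (RC\subref{5265a-fig}), (RC\subref{5362a-fig}), (RC\subref{5363a-fig})), normalizes by symmetry so that $w_2$ and $w_3$ are ``clean,'' and then splits into two cases according to whether the common neighbor $x_5$ of $w_3$ and $w_4$ is a $6$-vertex or a $7^+$-vertex, tracking charge contributions that are distributed unevenly across the $w_i$'s rather than uniformly $1/6$ through each. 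That normalization and case structure are exactly what your plan is missing, and they cannot be waved away by invoking the helper lemmas ``as a backup'' --- Lemma~\ref{helper1-lem} requires \emph{both} $w_2$ and $w_3$ to be clean, which is a nontrivial part of what must be arranged.
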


\noindent
\textit{Proof.}
Let $v$ be a $5$-vertex with neighbors $w_1,\ldots,w_5$ in clockwise order.  
Assume that $w_1$ is a $7^+$-vertex and $w_2, w_3, w_4, w_5$ are $6$-vertices.  
We label the remaining vertices as in Figures~\ref{5-76666afig} and~\ref{5-76666bfig}.
Note that $x_3, x_5, x_7$ are all $6^+$-vertices, or $G$ contains (RC\subref{5262a-fig}).
If at least two of $w_2, w_3, w_4, w_5$ have $5$-neighbors other than $v$, then $G$ contains 
(RC\subref{5265a-fig}), (RC\subref{5362a-fig}), or (RC\subref{5363a-fig}).
So, by symmetry, we assume that $w_2$ and $w_3$ both have $v$ as their only $5$-neighbor.

\begin{wrapfigure}[12]{l}{0.3\textwidth}
\centering
~~~~~\\
~~~~~\\
\begin{tikzpicture}[rotate=90, rotate=30, yscale=.866, xscale=-.5]
\tikzset{every node/.style=6vertB}

\draw 
(1,1) node[unvertB] (x3) {\footnotesize{$x_3$}}
--++ (1,-1) node (w3) {\footnotesize{$w_3$}}
--++ (1,1) node (w2) {\footnotesize{$w_2$}}
--++ (1,-1) node[5vertB] (v) {\footnotesize{$v$}} 
(x3) --++ (-1,-1) node[unvertB] (x4) {\footnotesize{$x_4$}} -- (w3)
(w3) -- (v) (x3) -- (w2);
\draw (3,-1) node (w4) {\footnotesize{$w_4$}};
\draw (5,-1) node (w5) {\footnotesize{$w_5$}};
\draw[dashed, thick] (5.5,0.5) node[7vertB] (w1) {\footnotesize{$w_1$}};
\draw (w3) -- (w4) -- (w5) -- (w1) 
(w2) -- (w1) -- (v) -- (w5) (w3) -- (w4) -- (v);
\draw (1,-1) node (x5) {\footnotesize{$x_5$}};
\draw (x4) -- (x5) -- (w4) (w3) -- (x5);
\draw (2,-2) node[unvertB] (x6) {\footnotesize{$x_6$}};
\draw (4,-2) node[unvertB] (x7) {\footnotesize{$x_7$}};
\draw (x5) -- (x6) -- (w4) -- (x7) -- (w5) (x6) -- (x7);
\end{tikzpicture}
\captionsetup{width=.275\textwidth}
\caption{A 5-vertex with a $7^+$-neighbor and four $6$-neighbors; Case 1, when $x_5$ is a $6$-vertex.\label{5-76666afig}}
\end{wrapfigure}

\textbf{Case 1: \bm{$x_5$} is a \bm{$6$}-vertex.} 
Note that $x_4$ is a $6^+$-vertex, or else $G[\{v,w_3,w_4,x_4,x_5\}]$ contains (RC\subref{5263a-fig}).  
Similarly, $x_6$ is a $6^+$-vertex.  
Either $x_6$ or $x_7$ is a $7^+$-vertex, 
or else $G[N[w_4]]$ contains (RC\subref{67a-fig}), with the degree of $v$ decreased from $6$ to $5$;  
so $x_6$ or $x_7$ gives $1/6$ to $v$ via $w_4$. 
By Lemma~\ref{helper1-lem}, vertex $x_5$
gives $1/6$ to $v$ via $w_3$ and $w_4$.  
If $x_3$ is a $7^+$-vertex, then it gives $1/6$ to $v$ via each of $w_2$ and $w_3$.
In this case, we are done, since $v$ gets $1/3+1/6+1/6+1/6+1/6=1$.
Assume instead that $x_3$ is a $6$-vertex.  So $x_4$ is a $7^+$-vertex, or else $G[N[w_3]]$
contains (RC\subref{67a-fig}), again with the degree of $v$ decreased from $6$ to $5$.  
But now $x_4$ gives $1/6$ to $v$ via $w_3$.
And $x_3$ also gives $1/6$ to $v$ via $w_2$ and $w_3$, by Lemma~\ref{helper1-lem}.
Now we are done, since $v$ gets $1/3+1/6+1/6+1/6+1/6=1$.

\begin{wrapfigure}[13]{l}{0.3\textwidth}
\centering
\begin{tikzpicture}[rotate=90, rotate=30, yscale=.866, xscale=-.5]
\tikzset{every node/.style=6vertB}

\draw 
(1,1) node[unvertB] (x3) {\footnotesize{$x_3$}}
--++ (1,-1) node (w3) {\footnotesize{$w_3$}}
--++ (1,1) node (w2) {\footnotesize{$w_2$}}
--++ (1,-1) node[5vertB] (v) {\footnotesize{$v$}} 
(x3) --++ (1,1) node[unvertB] (x2) {\footnotesize{$x_2$}} -- (w2)
(x3) --++ (-1,-1) node[unvertB] (x4) {\footnotesize{$x_4$}} -- (w3)
(w3) -- (v) (x3) -- (w2);
\draw (3,-1) node (w4) {\footnotesize{$w_4$}};
\draw (5,-1) node (w5) {\footnotesize{$w_5$}};
\draw[dashed, thick] (5.5,0.5) node[7vertB] (w1) {\footnotesize{$w_1$}};
\draw (w3) -- (w4) -- (w5) -- (w1) 
(w2) -- (w1) -- (v) -- (w5) (w3) -- (w4) -- (v);
\draw[dashed, thick] (1,-1) node[7vertB] (x5) {\footnotesize{$x_5$}};
\draw (x4) -- (x5) -- (w4) (w3) -- (x5);
\draw (2,-2) node[unvertB] (x6) {\footnotesize{$x_6$}};
\draw (4,-2) node[unvertB] (x7) {\footnotesize{$x_7$}};
\draw (x5) -- (x6) -- (w4) -- (x7) -- (w5) (x6) -- (x7);
\end{tikzpicture}
\captionsetup{width=.275\textwidth}
\caption{A 5-vertex with a $7^+$-neighbor and four $6$-neighbors; Case 2, when $x_5$ is a $7^+$-vertex.\label{5-76666bfig}}
\end{wrapfigure}

\textbf{Case 2: \bm{$x_5$} is a \bm{$7^+$}-vertex.} 
See Figure~\ref{5-76666bfig}.
We first show that $v$ gets a total of $1/3$ from $x_5$ and $x_7$ via $w_3$, 
$w_4$, and $w_5$.  If $x_6$ is a $6^+$-vertex, then $x_5$ gives $1/6$ to $v$ via each of $w_3$ and $w_4$, as desired.
So assume instead that $x_6$ is a $5$-vertex.  Now $x_7$ must be a $7^+$-vertex, by (RC\subref{5263a-fig}).  If any neighbor $x_8$ of $w_5$ (but $v$) is a $5$-vertex, then $G[\{v,w_4,w_5,x_6,x_8\}]$ contains (RC\subref{5362a-fig}); so assume each such $x_8$ is 
a $6^+$-vertex.  But now $x_5$ gives $1/6$ to $v$ 
via $w_3$ and also $x_7$ gives $1/6$ to $v$ via $w_5$, as desired.

We now show that $v$ gets a total of $1/3$ from $x_2$ and $x_3$.  If $x_3$ is a $7^+$-vertex, then it gives $1/6$ to $v$ via
each of $w_2$ and $w_3$.  So assume $x_2$ is a $6$-vertex.  By Lemma~\ref{helper1-lem}, vertex $x_3$ gives $1/6$ to $v$ via $w_2$ 
and $w_3$.  By Lemma~\ref{helper2-lem}, vertex $x_2$ gives $1/6$ to $v$ via $w_2$.  So $v$ gets $1/3+1/3+1/3=1$, and we are done.
\hfill$\qed$

\begin{lem}
Each $5$-vertex with two adjacent $7$-neighbors ends happy.
\end{lem}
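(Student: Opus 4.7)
The plan is to follow the same template as the preceding lemma on a $5$-vertex with a single $7^+$-neighbor and four $6$-neighbors. By Lemma~\ref{5-5lem} together with the earlier lemmas of this section, I may assume that $v$ has no $5$-neighbor, no $8^+$-neighbor, and at most two $7^+$-neighbors. Combined with the hypothesis, this forces $v$'s neighbors in cyclic order to be $w_1, w_2$ (both $7$-vertices, adjacent on $v$'s face cycle) and $w_3, w_4, w_5$ (all $6$-vertices). Since no face at $v$ is a $(7,5,5)$-face, Rule (R1) delivers $1/3$ from each of $w_1$ and $w_2$ to $v$, totaling $2/3$. It therefore suffices to show that $v$ receives at least another $1/3$ via $w_3, w_4, w_5$.

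I label the second-neighborhood analogously to Figure~\ref{5-76666afig}: let $x_3$, $x_5$, $x_7$ be the third vertices of the outer faces at edges $w_2w_3$, $w_3w_4$, $w_4w_5$, respectively. Because the edges $vw_1$ and $vw_2$ each lie on a $(7,6,5)$-face containing $v$, Rule (R2) is blocked, so neither $w_1$ nor $w_2$ directly sends charge to any of $w_3, w_4, w_5$. Hence all extra charge reaching $v$ must originate at $x_3, x_5, x_7$ or farther outside. I first verify that $x_3, x_5, x_7$ must all be $6^+$-vertices, else a reducible configuration such as (RC\subref{5262a-fig}) or (RC\subref{5263a-fig}) appears.

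In the generic subcase where $x_3, x_5, x_7$ are all $6$-vertices and no $w_i$ ($i \in \{3,4,5\}$) has a $5$-neighbor other than $v$, I apply Lemma~\ref{helper1-lem} to each of the triples $(w_3, w_4, x_5)$ and $(w_4, w_5, x_7)$; each application yields $1/6$ flowing to $v$, for a combined $1/3$. If some $x_j$ is instead a $7^+$-vertex, it feeds $1/6$ to the adjacent $w_i$ through Rule (R2), and Lemma~\ref{helper2-lem} (together with a short analysis of which nearby $6$-vertices have $5$-neighbors beyond $v$) confirms that enough of this charge reaches $v$ to combine with the contributions of the other $x_j$'s. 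Exceptional subcases in which some $w_i$ has a second $5$-neighbor are ruled out by reducible configurations such as (RC\subref{5362a-fig}), (RC\subref{5363a-fig}), or (RC\subref{5365a-fig}).

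The main obstacle is the subcase bookkeeping: one has to enumerate the possible degree patterns at $x_3, x_5, x_7$ and the possible second $5$-neighbors of $w_3, w_4, w_5$, and check that each pattern either carries $1/6 + 1/6$ of charge inward to $v$ or forces a reducible configuration from Figure~\ref{main-reduc-fig}. The arguments parallel those of the preceding lemma and are actually simpler here, since the two adjacent $7$-neighbors already contribute $2/3$ rather than just $1/3$, so only $1/3$ of additional charge is needed instead of $2/3$.
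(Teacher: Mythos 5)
Your high-level framing is right: the two adjacent $7$-vertices give $v$ a total of $2/3$ by \Rule{1}, so you only need to show another $1/3$ arrives through $w_3,w_4,w_5$. The ``generic'' subcase you isolate (both $x_5$ and $x_7$ are $6$-vertices, and no $w_i$ has a second $5$-neighbor) is handled correctly by two applications of Lemma~\ref{helper1-lem}: since $x_5\ne x_7$, the two contributions of $1/6$ are to distinct source vertices and add. However, that subcase is actually a narrow slice of the problem, and the remaining cases, which you dismiss in one sentence, are where the real work is.

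Two of your specific claims are false. First, ``$x_3,x_5,x_7$ must all be $6^+$-vertices, else (RC\subref{5262a-fig}) or (RC\subref{5263a-fig}) appears'' fails for $x_3$: that vertex lies on the outer triangle of the edge $w_2w_3$ where $w_2$ is a $7$-vertex, and both cited reducible configurations require the two internal vertices of the relevant path to be $6$-vertices. The paper only verifies this $6^+$ claim for the two outer vertices lying on edges \emph{between} two $6$-neighbors of $v$ (its $x_3$ and $x_5$), never for the vertex opposite the $7$-vertex. Second, ``exceptional subcases in which some $w_i$ has a second $5$-neighbor are ruled out by reducible configurations'' is not true: the paper's Case~1 is exactly the situation where the vertex opposite $v$ across the middle $6$-vertex is a $5$-vertex (so that middle $6$-vertex does have a second $5$-neighbor), and this configuration is \emph{not} reducible; the paper must instead show that then both remaining second-neighborhood vertices are $7^+$-vertices and deduce the charge from them. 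Similarly, in Cases~2 and~3, \emph{one} of the outer $6$-neighbors of $v$ may legitimately have a second $5$-neighbor; only having two of them does force (RC\subref{5363a-fig}).

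The deeper issue is that your case split (on whether $x_5,x_7$ are $6$-vertices or $7^+$-vertices) doesn't track the parameter that actually drives the argument, namely the degree of the vertex opposite $v$ in the neighborhood of the middle $6$-vertex (the paper's $x_4$). That vertex determines whether the middle $6$-vertex has a second $5$-neighbor (if it is a $5$-vertex), whether at least one of $x_5,x_7$ must be a $7^+$-vertex (if it is a $6$-vertex, else (RC\subref{67a-fig}) appears at the middle $6$-vertex), and whether it directly supplies charge via \Rule{2} or \Rule{4} (if it is a $7^+$-vertex). Your proposal never mentions this vertex at all, and a ``short analysis of which nearby $6$-vertices have $5$-neighbors'' backed by Lemma~\ref{helper2-lem} (which the paper does not use in this lemma) does not obviously fill that hole. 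As written, the proof has a genuine gap in all the non-generic cases.
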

\noindent
\textit{Proof.}
Assume that $v$ is a $5$-vertex with neighbors $w_1,\ldots,w_5$ in order, and that $w_1$ and $w_5$ are $7$-vertices.
By Lemma~\ref{5-5lem}, we assume that $v$ has no $5$-neighbors.  If $v$ has another $7^+$-neighbor, then $v$ gets $1/3$ 
from each and
we are done.  So we assume $w_2, w_3, w_4$ are $6$-vertices.  We label the remaining vertices as in Figure~\ref{5-77666fig}.  
Both $x_3$ and $x_5$ are $6^+$-vertices, or else $G$ contains (RC\subref{5262a-fig}).

\begin{wrapfigure}[12]{l}{0.3\textwidth}
\centering
\begin{tikzpicture}[rotate=90, rotate=30, yscale=.866, xscale=-.5]
\tikzset{every node/.style=6vertB}

\draw[thick, dashed] 
(1,1) node (x3) {\footnotesize{$x_3$}};
\draw (x3) --++ (1,-1) node (w3) {\footnotesize{$w_3$}}
--++ (1,1) node (w2) {\footnotesize{$w_2$}}
--++ (1,-1) node[5vertB] (v) {\footnotesize{$v$}} 
(x3) --++ (-1,-1) node[unvertB] (x4) {\footnotesize{$x_4$}} -- (w3)
(w3) -- (v) (x3) -- (w2);
\draw (3,-1) node (w4) {\footnotesize{$w_4$}};
\draw (5,-1) node[7vertB] (w5) {\footnotesize{$w_5$}};
\draw (5.5,0.5) node[7vertB] (w1) {\footnotesize{$w_1$}};
\draw (w3) -- (w4) -- (w5) -- (w1) (w2) -- (w1) -- (v) -- (w5) (w3) -- (w4) -- (v);
\draw[thick, dashed] (1,-1) node (x5) {\footnotesize{$x_5$}};
\draw (x4) -- (x5) -- (w4) (w3) -- (x5);
\end{tikzpicture}

\captionsetup{width=.27\textwidth}
\caption{A 5-vertex with two adjacent $7$-neighbors and three succesive $6$-neighbors\label{5-77666fig}}
\end{wrapfigure}
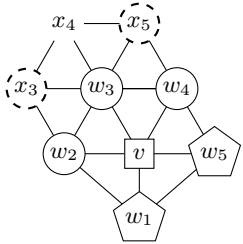

\textbf{Case 1: \bm{$x_4$} is a \bm{$5$}-vertex.}  Now $x_3$ and $x_5$ are both $7^+$-vertices, or else $G[N[w_3]]$ contains 
(RC\subref{5263a-fig}).  Furthermore, $w_2$ and $w_4$ both have $v$ as their unique $5$-neighbor, or else $G$ contains 
(RC\subref{5362a-fig}).  So now $x_3$ gives $1/6$ to $w_2$, and on to $v$; and $x_5$ gives $1/6$ to $w_4$, and on to $v$.  
Thus, $v$ gets $1/3+1/3+1/6+1/6=1$, and we are done.

\textbf{Case 2: \bm{$x_4$} is a \bm{$6$}-vertex.}  Either $x_3$ or $x_5$ is a $7^+$-vertex, or else $G[N[w_3]]$ contains 
(RC\subref{67a-fig}); 
by symmetry, assume that $x_3$ is a $7^+$-vertex.  So $x_3$ gives $1/6$ to $v$ via $w_3$.  If $x_5$ is also a $7^+$-vertex, 
then we are done; so assume that $x_5$ is a $6$-vertex.  If $w_2$ has no $5$-neighbor other than $v$, then $w_2$ also gets 
$1/6$ from $x_3$, and gives this all to $v$.  Thus, we assume $w_2$ has a $5$-neighbor other than $v$.  This implies 
that $w_4$ has $v$ as its unique $5$-neighbor, or $G$ contains (RC\subref{5363a-fig}).  
But now Lemma~\ref{helper1-lem} implies that $x_5$ gives $1/6$ to $v$ via $w_3$ and $w_4$; so $x$ gets $1/3+1/3+1/6+1/6$, 
and we are done.

\textbf{Case 3: \bm{$x_4$} is a \bm{$7^+$}-vertex.}  If $w_2$ and $w_4$ both have $5$-neighbors besides $v$, say $y'$ and $y''$, then
$G[\{v,w_2,w_3,w_4,y',y''\}]$ contains (RC\subref{5363a-fig}).  So, assume by symmetry that $w_2$ has no $5$-neighbor but $v$.  
Thus, by Lemma~\ref{helper1-lem}, vertex $x_3$ gives a total of $1/6$ to $v$ via $w_2$ and $w_3$.  Since $x_4$ also gives 
$1/6$ to $v$ via $w_3$, vertex $v$ gets $1/3+1/3+1/6+1/6=1$, and we are done.
\hfill$\qed$

\bigskip

The next lemma is our last, and will conclude our proof.

\begin{lem}
Each $5$-vertex with two non-adjacent $7$-neighbors ends happy.
\end{lem}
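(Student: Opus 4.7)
First, fix notation: let $v$ be the 5-vertex with neighbors $w_1,\ldots,w_5$ in cyclic order and with the two non-adjacent 7-neighbors at positions $w_1$ and $w_3$. By Lemma~\ref{5-5lem} and the lemma for 5-vertices with three (or more) $7^+$-neighbors, we may assume $v$ has no 5-neighbor and that $w_2,w_4,w_5$ are all 6-vertices. By \Rule{1}, each of $w_1$ and $w_3$ sends $1/3$ to $v$ (since $v$ has no 5-neighbor, no $(7,5,5)$-face applies), so $v$ starts at $-1+2/3=-1/3$ and must gain at least $1/3$ more via \Rule{6} from the needy 6-neighbors $w_2,w_4,w_5$.

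The structural obstacle is that every edge between $\{w_1,w_3\}$ and $\{w_2,w_4,w_5\}$ lies on the $(7,6,5)$-face it spans with $v$, so \Rule{2} blocks any direct donation from $w_1$ or $w_3$ into the 6-ring; and both $w_4$ and $w_5$ are needy, so \Rule{5} does not relay between them either. Thus the remaining charge must arrive at $w_2,w_4,w_5$ from the three outer neighbors of each, which I label $a_1,a_2,a_3$ around $w_2$, $b_1,b_2,b_3$ around $w_4$, and $c_1,c_2,c_3$ around $w_5$. I would proceed as in the preceding lemma on two adjacent 7-neighbors: use (RC\subref{67a-fig}) together with the short $(5,6,\ldots)$ reducible configurations to force $7^+$-vertices among these outer neighbors and to bound the number of extra 5-neighbors on the $w_i$'s, and then apply Lemmas~\ref{helper1-lem} and~\ref{helper2-lem} so that each such $7^+$-vertex delivers $1/6$ into $v$ along a $(5,6,6)$ or $(5,6,6,6)$ chain. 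The contribution through the $w_2$ side and the contribution through the $w_4$--$w_5$ side should together produce the required $1/3$.

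The main obstacle will be organizing the case analysis over the many choices for the degrees of $a_i,b_i,c_i$ and for whether any $w_j$ has an additional 5-neighbor that redirects its charge by \Rule{6}. Each pattern that threatens to deliver less than $1/3$ must be matched to a reducible configuration from Figure~\ref{main-reduc-fig} or~\ref{more-reduc-fig}; in particular, the short $(5,6,5)$ and $(5,6,6,5)$ families such as (RC\subref{5262a-fig})--(RC\subref{5365a-fig}) and the mixed 7-vertex configurations (RC\subref{526371a-fig}), (RC\subref{536471a-fig}), (RC\subref{546571a-fig}), and (RC\subref{546671b-fig}) should be the workhorses. Since this is the final lemma of the discharging argument, once it is proved the Main Theorem follows immediately by the contradiction that every vertex ends happy but the total charge is $-12$.
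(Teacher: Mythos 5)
Your setup is correct and matches the paper's framing (you relabel the two $7$-neighbors as $w_1,w_3$ instead of the paper's $w_2,w_5$, but the configuration is the same: three $6$-neighbors, two of them consecutive and one isolated between the $7$-vertices). Your initial accounting is also right: by \Rule{1}, $v$ receives $1/3$ from each $7$-neighbor, leaving a deficit of exactly $1/3$ to be made up through the $6$-ring; \Rule{2} does block any direct $7\to 6$ donation along the edges of the inner wheel because every such edge is on a $(7,6,5)$-face with $v$; and the two adjacent $6$-neighbors cannot relay charge to each other via \Rule{5} because both are needy. You have also correctly identified the right toolbox: (RC\subref{67a-fig}) to force $7^+$-vertices among the second ring, the short $(5,6,\ldots)$ families (RC\subref{5262a-fig})--(RC\subref{5365a-fig}), the mixed $7$-vertex configurations, and Lemmas~\ref{helper1-lem} and~\ref{helper2-lem} to deliver $1/6$ pulses into $v$ through needy $6$-vertices.

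However, the proposal stops at the point where the proof actually begins. The statement ``I would proceed as in the preceding lemma \ldots{} The main obstacle will be organizing the case analysis'' is a plan, not an argument. The substance of this lemma is precisely the case analysis you defer: one must split on whether the outer vertex opposite $v$ across the isolated $6$-neighbor is a $6$- or $7^+$-vertex, track which of the three $6$-neighbors of $v$ has a second $5$-neighbor, verify in each branch that the remaining neighbors of the relevant second-ring $6$-vertices are not needy (or else exhibit the specific reducible configuration among (RC\subref{5265a-fig}), (RC\subref{5365b-fig}), (RC\subref{5366b-fig}), (RC\subref{5367a-fig}), (RC\subref{5367b-fig}), (RC\subref{5464a-fig}), (RC\subref{516771a-fig}), (RC\subref{526371a-fig}), (RC\subref{526571a-fig}), (RC\subref{536471a-fig}), (RC\subref{536571a-fig}), (RC\subref{546571a-fig}), (RC\subref{546671b-fig})), and then check that the $1/6$ and $1/12$ contributions actually sum to at least $1/3$. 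Several branches deliver only $1/12$ per source and need two separate sources; others require escalating to the third ring (vertices like $x_0,x_1,y_4$ in the paper's notation). None of this bookkeeping is carried out, and it is not at all automatic that ``the $w_2$ side and the $w_4$--$w_5$ side together produce $1/3$''; the paper has to work to rule out the threatening patterns one by one. As written, the proof has a genuine gap: the entire case analysis is missing.
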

\noindent
\textit{Proof.}
Assume that $v$ is a $5$-vertex with neighbors $w_1,\ldots,w_5$ in order, and that $w_2$ and $w_5$ are $7$-vertices.
By Lemma~\ref{5-5lem}, we assume that $v$ has no $5$-neighbors.  If $v$ has another $7^+$-neighbor, then $v$ gets $1/3$ 
from each and we are done.  So we assume $w_1, w_3, w_4$ are $6$-vertices.  We label the remaining vertices as in 
Figure~\ref{5-76766afig}.  Note that $x_5$ is a $6^+$-vertex, or else $G$ contains (RC\subref{5262a-fig}).
Note also that $v$ receives $1/3$ from each of $w_2$ and $w_5$; so it suffices to show that $v$ receives at least an
additional $1/3$.  This is what we do next.

\begin{wrapfigure}[12]{l}{0.3\textwidth}
\centering
\begin{tikzpicture}[rotate=90, rotate=30, yscale=.866, xscale=-.5, scale=.985]
\tikzset{every node/.style=6vertB}

\draw 
(1,1) node[unvertB] (x3) {\footnotesize{$x_3$}}
--++ (1,-1) node (w3) {\footnotesize{$w_3$}}
--++ (1,1) node[7vertB] (w2) {\footnotesize{$w_2$}}
--++ (1,-1) node[5vertB] (v) {\footnotesize{$v$}} 
(x3) --++ (1,1) node[unvertB] (x2) {\footnotesize{$x_2$}} -- (w2)
(x3) --++ (-1,-1) node[unvertB] (x4) {\footnotesize{$x_4$}} -- (w3)
(w3) -- (v) (x3) -- (w2);
\draw (0,2) node[unvertB] (y3) {\footnotesize{$y_3$}};
\draw (x2) -- (y3) -- (x3);
\draw (3,-1) node (w4) {\footnotesize{$w_4$}};
\draw (5,-1) node[7vertB] (w5) {\footnotesize{$w_5$}};
\draw (5.5,0.5) node (w1) {\footnotesize{$w_1$}};
\draw (w3) -- (w4) -- (w5) -- (w1) 
(w2) -- (w1) -- (v) -- (w5) (w3) -- (w4) -- (v);
\draw (1,-1) node (x5) {\footnotesize{$x_5$}};
\draw (x4) -- (x5) -- (w4) (w3) -- (x5);
\draw (-1,1) node[unvertB] (y4) {\footnotesize{$y_4$}};
\draw (y3) -- (y4) -- (x4) (y4) -- (x3);
\draw (2,-2) node[unvertB] (x6) {\footnotesize{$x_6$}};
\draw (4,-2) node[unvertB] (x7) {\footnotesize{$x_7$}};
\draw (x5) -- (x6) -- (w4) -- (x7) -- (w5) (x6) -- (x7);
\draw (6,-2) node[unvertB] (x8) {\footnotesize{$x_8$}};
\draw (x7) -- (x8) -- (w5);
\end{tikzpicture}
\captionsetup{width=.275\textwidth}
\caption{A 5-vertex with two non-adjacent $7$-neighbors and three non-succesive $6$-neighbors\label{5-76766afig}}
\end{wrapfigure}

\textbf{Case 1: \bm{$x_5$} is a \bm{$6$}-vertex.}  
Now $x_4$ and $x_6$ are both also $6^+$-vertices, or else $G$ contains (RC\subref{5263a-fig}).  Suppose that $w_3$ and $w_4$ 
both have $v$
as their unique $5$-neighbor.  By Lemma~\ref{helper1-lem}, vertex $x_5$ gives $1/6$ to $v$ via $w_3$ and $w_4$.  If $x_3$ or $x_4$
is a $7^+$-vertex, then it gives $1/6$ to $v$ via $w_3$, and we are done; so we assume that $x_3$ and $x_4$ are both $6$-vertices.
Since $w_3$ has $v$ as its only $5$-neighbor, by Lemma~\ref{helper2-lem} vertex $x_4$ gives $1/6$ to $v$ via $w_3$.
In either case, $v$ receives at least $1/3+1/3+1/6+1/6=1$, so we are done.

So we assume instead that either $w_3$ or $w_4$ has a $5$-neighbor other than $v$.  They cannot both have second $5$-neighbors,
say $y'$ and $y''$, or else $G[\{v,w_3,w_4,y',y''\}]$ contains (RC\subref{5362a-fig}).  
So we assume by symmetry that $w_3$ has $v$ as its unique
$5$-neighbor.  Again, $x_4$ gives $1/6$ to $v$ via $w_3$ by Lemma~\ref{helper2-lem}. 
Since $w_4$ has a $5$-neighbor other than $v$, we know that $x_7$ is a $5$-vertex.  This implies that 
$x_6$ is a $7^+$-vertex; otherwise, $G$ contains (RC\subref{5264a-fig}).

If $x_3$ is a $7^+$-vertex, then it gives $1/6$ to $v$ via $w_3$; also $x_4$ gives $1/6$ to $v$ via $w_3$, 
by Lemma~\ref{helper2-lem}.  So we are done.  Thus, we assume that $x_3$ is a $6$-vertex.  Suppose that $x_4$ is a $7^+$-vertex.
So $x_5$ receives $1/6$ from each of $x_4$ and $x_6$.  Since $v$ receives $1/6$ from $x_4$ via $w_3$, it suffices to show that 
$x_5$ has at most one needy neighbor other than $w_3$ and $w_4$.  If both other neighbors of $x_5$ are needy, then $G$
contains (RC\subref{5365a-fig}), possibly with two $5_1$-vertices identified; this is forbidden.  Thus, $x_5$ splits its charge 
among at most $3$ needy neighbors, sending at least $(1/6+1/6)/3=1/9$ to each of $w_3$ and $w_4$.  All of this charge given to 
$w_3$ continues to $v$; and so does at least half the charge given to $w_4$.  Thus, in total $v$ gets $1/3+1/3+1/6+1/9+1/18=1$,
and we are done.

So assume instead that $x_4$ is a $6$-vertex; note that $x_4$ is not needy, or $G$ contains (RC\subref{5264a-fig}).  
Since $x_3, x_4, x_5$ are all $6$-vertices, $x_3$ has no $5$-neighbor, or else $G$ contains (RC\subref{5265a-fig}). 
Further, at most one of $x_2$ and $y_3$ is needy, or $G$ has a reducible configuration
(e.g., if $x_2$ and $y_3$ are $6$-vertices with $5$-neighbors $y'$ and $y''$, then $G[\{v,w_3,w_4,x_2,x_3,x_4,x_5,y_3,y',y''\}]$ 
contains (RC\subref{5367a-fig})).  
Also, $y_4$ is not needy or else $G$ contains (RC\subref{526571a-fig}), with the degree of the $7$-vertex decreased to $6$.
Thus, $x_3$ has at most two needy neighbors: $w_3$ and at most one of $x_2$ and $y_3$.  Furthermore, if $x_3$ has 
$w_2$ as its unique $7^+$-neighbor, then neither $x_2$ nor $y_3$ is needy, or else $G$ contains (RC\subref{5267c-fig}) or (RC\subref{5268b-fig}).  
Thus, $x_3$ sends $1/6$ to $v$ via $w_3$, and we are done.

\begin{wrapfigure}[12]{l}{0.3\textwidth}
\centering
\begin{tikzpicture}[rotate=90, rotate=30, yscale=.866, xscale=-.5]
\tikzset{every node/.style=6vertB}

\draw 
(1,1) node (x3) {\footnotesize{$x_3$}}
--++ (1,-1) node (w3) {\footnotesize{$w_3$}}
--++ (1,1) node[7vertB] (w2) {\footnotesize{$w_2$}}
--++ (1,-1) node[5vertB] (v) {\footnotesize{$v$}} 
(x3) --++ (1,1) node (x2) {\footnotesize{$x_2$}} -- (w2)
(x3) --++ (-1,-1) node (x4) {\footnotesize{$x_4$}} -- (w3)
(w3) -- (v) (x3) -- (w2);
\draw (3,-1) node (w4) {\footnotesize{$w_4$}};
\draw (5,-1) node[7vertB] (w5) {\footnotesize{$w_5$}};
\draw (5.5,0.5) node (w1) {\footnotesize{$w_1$}};
\draw (w3) -- (w4) -- (w5) -- (w1) 
(w2) -- (w1) -- (v) -- (w5) (w3) -- (w4) -- (v);
\draw[thick, dashed] (1,-1) node[7vertB] (x5) {\footnotesize{$x_5$}};
\draw (x4) -- (x5) -- (w4) (w3) -- (x5);
\draw (2,-2) node[unvertB] (x6) {\footnotesize{$x_6$}};
\draw (4,-2) node[unvertB] (x7) {\footnotesize{$x_7$}};
\draw (x5) -- (x6) -- (w4) -- (x7) -- (w5) (x6) -- (x7);
\draw (6,-2) node[unvertB] (x8) {\footnotesize{$x_8$}};
\draw (x7) -- (x8) -- (w5);
\end{tikzpicture}
\captionsetup{width=.275\textwidth}
\caption{A 5-vertex with two non-adjacent $7$-neighbors and three non-succesive $6$-neighbors\label{5-76766bfig}}
\end{wrapfigure}

\textbf{Case 2: \bm{$x_5$} is a \bm{$7^+$}-vertex.}  
See Figure~\ref{5-76766bfig}.  If both $w_3$ and $w_4$ have $v$ as their unique $5$-neighbor, then $x_5$
gives $1/6$ to $v$ via each of $w_3$ and $w_4$, and we are done.  So assume that $w_3$ or $w_4$ has another $5$-neighbor.  
If both have second $5$-neighbors, say $y'$ and $y''$, then $G[\{v,w_3,w_4,y',y''\}]$ contains (RC\subref{5362a-fig}).  
So assume that $w_3$ has $v$ as its unique $5$-neighbor, but $w_4$ has a $5$-neighbor other than $v$; call it $y'$. 
So $x_5$ sends $1/6$ to $v$ via $w_3$.  If $x_3$ or $x_4$ is a
$7^+$-vertex, then it gives $1/6$ to $v$ via $w_3$ and we are done.  So assume that $x_3$ and $x_4$ are both $6$-vertices.

If $x_3$ and $x_4$ are both needy, say with $5$-neighbors $y''$ and $y'''$, then $G[\{v,w_3,w_4,x_3,x_4,y',y'',y'''\}]$ contains (RC\subref{5464a-fig}).  So let $x'$ be a vertex among $x_3$ and $x_4$ that is not needy.
We first show that $x'$ gives $1/12$ to $v$ via $w_3$.  If $x'$ has at least two $7^+$-neighbors, then it receives at least $2(1/6)$,
and splits it at most four ways, giving $w_3$ at least $1/12$.  But if $x'$ has only a single $7^+$-neighbor, then none of its
neighbors but $w_3$ and $\{x_3,x_4\}\setminus\{x'\}$ is needy, or else $G$ contains (RC\subref{5365b-fig}) or (RC\subref{5366b-fig}) or
(RC\subref{5367b-fig}).
So $x'$ gets $1/6$ from its $7^+$-neighbor and sends at least half
to $v$ via $w_3$.  

If $x_5$ is an $8^+$-vertex, then it sends $1/4$ to $v$ via $w_3$.  So $v$ gets $1/3+1/3+1/4+1/12=1$.
So assume $x_5$ is a $7$-vertex.  Also $x_6$ is a $5$-vertex; otherwise $x_5$ sends $1/6$ to $w_4$ and $1/12$ to $v$.

Suppose that $x_4$ is not needy (and $x_3$ is possibly needy).
Now we show that $x_4$ sends $1/6$ to $v$ via $w_3$.  
If $x_4$ has at least three $7^+$-neighbors, then it receives $3(1/6)$ and sends $1/6$ to $w_3$.  
And if $x_4$ has $x_5$ as its only $7^+$-neighbor, then $G[N[\{x_4,x_5\}]]$ contains (RC\subref{516771a-fig}).
So instead assume that $x_4$ has exactly two $7^+$-neighbors: $x_5$ and exactly one of $y_4,y_5,y_6$.  
Now $w_3$ is needy, and possibly so is $x_3$; but no other neighbors
of $x_4$ are needy, or else $G$ contains (RC\subref{526571a-fig}) or (RC\subref{536471a-fig}) or (RC\subref{5366b-fig})
or (RC\subref{5365b-fig}).  To be specific, we label vertices as in Figure~\ref{5-76766cfig}.
We list the possible triples of a $7^+$-neighbor of $x_4$, a needy neighbor of $x_4$, and the reducible configuration:
$(y_4, y_5$, RC\subref{526571a-fig}),
$(y_4, y_6$, RC\subref{536471a-fig}),
$(y_5, y_4$, RC\subref{5365b-fig}),
$(y_5, y_6$, RC\subref{536471a-fig}),
$(y_6, y_4$, RC\subref{5365b-fig}),
$(y_6, y_5$, RC\subref{5366b-fig}).
Thus, $x_4$ sends $1/6$ to $v$ via $w_3$, as desired.
So $v$ gets $1/3+1/3+1/6+1/6=1$, and we are done.

Finally, assume that $x_3$ is not needy (and $x_4$ is needy); see Figure~\ref{5-76766cfig}.
If $y_6$ is a $5$-vertex, then $G[N[x_5]]$ contains (RC\subref{526371a-fig}).  
So assume $y_6$ is a a $6^+$-vertex.  Since $x_4$ is needy, $y_5$ is a $5$-vertex.  Thus, 

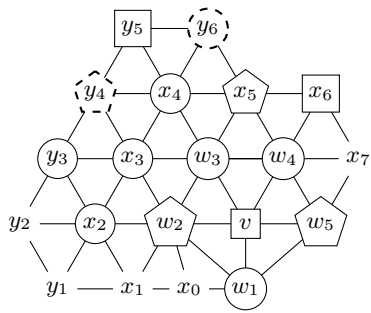
\begin{wrapfigure}[13]{l}{0.3\textwidth}
\centering
\begin{tikzpicture}[rotate=90, rotate=30, yscale=.866, xscale=-.5, scale=.985]
\tikzset{every node/.style=6vertB}

\draw 
(1,1) node (x3) {\footnotesize{$x_3$}}
--++ (1,-1) node (w3) {\footnotesize{$w_3$}}
--++ (1,1) node[7vertB] (w2) {\footnotesize{$w_2$}}
--++ (1,-1) node[5vertB] (v) {\footnotesize{$v$}}; 
\draw (x3) ++ (1,1) node (x2) {\footnotesize{$x_2$}};
\draw (x3) -- (x2) -- (w2)
(x3) --++ (-1,-1) node (x4) {\footnotesize{$x_4$}} -- (w3)
(w3) -- (v) (x3) -- (w2);
\draw (4,2) node[unvertB] (x1) {\footnotesize{$x_1$}} --
(3,3) node[unvertB] (y1) {\footnotesize{$y_1$}} --
(1,3) node[unvertB] (y2) {\footnotesize{$y_2$}}; 
\draw (0,2) node (y3) {\footnotesize{$y_3$}};
\draw (y2) -- (y3);
\draw (w2) -- (x1) -- (x2) -- (y1) (y2) -- (x2) -- (y3) -- (x3);
\draw (3,-1) node (w4) {\footnotesize{$w_4$}};
\draw (5,-1) node[7vertB] (w5) {\footnotesize{$w_5$}};
\draw (5.5,0.5) node (w1) {\footnotesize{$w_1$}};
\draw (w3) -- (w4) -- (w5) -- (w1) (w2) -- (w1) -- (v) -- (w5) (w3) -- (w4) -- (v);
\draw (barycentric cs:w1=1,x1=1) node[unvertB] (x0) {\footnotesize{$x_0$}};
\draw (x1) -- (x0) -- (w2) (x0) -- (w1);
\draw (1,-1) node[7vertB] (x5) {\footnotesize{$x_5$}};
\draw (x4) -- (x5) -- (w4) (w3) -- (x5);
\draw[thick, dashed] (-1,1) node[7vertB] (y4) {\footnotesize{$y_4$}};
\draw (y3) -- (y4) -- (x4) (y4) -- (x3);
\draw (2,-2) node[5vertB, inner sep=.25pt] (x6) {\footnotesize{$x_6$}};
\draw (4,-2) node[unvertB] (x7) {\footnotesize{$x_7$}};
\draw (x5) -- (x6) -- (w4) -- (x7) -- (w5) (x6) -- (x7);
\draw[thick, dashed] (x4) ++ (-1,-1) node (y6) {\footnotesize{$y_6$}};
\draw (y4) --++ (-1,-1) node[5vertB, inner sep=.5pt] (y5) {\footnotesize{$y_5$}} -- (x4) --++ (y6);
\draw (x5) -- (y6) -- (y5);
\end{tikzpicture}
\captionsetup{width=.275\textwidth}
\caption{A 5-vertex with two non-adjacent $7$-neighbors and three non-succesive $6$-neighbors\label{5-76766cfig}}
\end{wrapfigure}

\noindent
$y_4$ is a $7^+$-vertex, or else
$G[\{v,w_3,x_3,x_4,y_4,y_5\}]$ contains (RC\subref{5264a-fig}).
Now $x_3$ gets $1/6$ from each of $w_2$ and $y_4$.
If neither $x_2$ nor $y_3$ is needy, then $x_3$ sends $1/6$ to $v$ via $w_3$, and we are done.  
Similarly, if either $x_2$ or $y_3$ is a $7^+$-vertex, then $x_3$ again sends $1/6$ to $v$ via $w_3$.
So assume that both $x_2$ and $y_3$ are $6$-vertices and at least one of them is needy, with $5$-neighbor $z'$.  
Regardless, note that $x_3$ sends at least $(1/6+1/6)/4=1/12$ to $v$ via $w_3$.  Since $v$ also receives $1/3$ from 
each of $w_2$ and $w_5$, and also gets $1/6$ from $x_5$ via $w_3$, it suffices to show that $v$ receives an additional $1/12$.

If $w_1$ has a $5$-neighbor $z''$ other than $v$, 
then $G[\{v,w_1,w_2,w_3,x_2,$ $x_3,x_4,y_3,y_5,z',z''\}]$ contains (RC\subref{546571a-fig}) or (RC\subref{546671b-fig}); 
so assume not.  If $w_1$ has a $7^+$-neighbor besides $w_2$ and $w_5$, then it gets $1/6$ and sends it to $v$, 
and we are done.  So we assume $x_0$ is a $6$-vertex.  If $x_0$ is needy, then $G$ contains (RC\subref{536571a-fig}), 
since $x_4$ is needy.  So we assume $x_0$ is not needy.  If $x_1$ is a $6$-vertex,
then $G[N[w_2]]$ contains (RC\subref{6771a-fig}); so assume that $x_1$ is a $7^+$-vertex.  Now $x_0$ gets at least $2(1/6)$ 
and gives $v$ at least $1/12$ via $w_1$.  So $v$ gets $1/3+1/3+1/6+1/12+1/12=1$, and we are done.
\hfill$\qed$

\section*{Acknowledgments}
We thank two anonymous referees who read this manuscript quite closely and suggested a number of improvements.
In particular, one referee caught a number of inaccuracies in an earlier version.

\footnotesize{
\bibliographystyle{habbrv}
\bibliography{reconfiguration}
}

\end{document}